\pgfplotsset{compat=1.16}
\definecolor{labelkey}{rgb}{0.6,0,1}
\newcounter{corr}
\definecolor{violet}{rgb}{0.580,0.,0.827}
\newcommand{\corr}[3]{\typeout{Warning : a correction remains in page
\thepage}
				\stepcounter{corr}        
				{\color{blue}\ifmmode\text{\,\sout{\ensuremath{#1}}\,}\else\sout{#1}\fi}
       {\color{red}#2}
       {\color{violet} #3}}
\numberwithin{equation}{section}
\def\ctel#1{\ensuremath{\Cl[ctrcst]{#1}}}
\def\cter#1{\ensuremath{\Cr{#1}}}
 \newtheorem{thm}{Theorem}[section]
 \newtheorem{lem}[thm]{Lemma}
 \newtheorem{remark}[thm]{Remark}
 \newtheorem{definition}[thm]{Definition}
\newcommand{\mesh}{\mathcal T}
 \def\gnc{\mathcal{G}}
 \newcommand\nti{n \to +\infty}
\renewcommand{\d}{{\mathcal \partial}}
\newcommand{\edges}{\mathcal{E}}
\newcommand{\edgesint}{\mathcal{E}_{\mathrm{int}}}
\newcommand{\edgesext}{\mathcal{E}_{\mathrm{ext}}}
\newcommand{\dx}{\, \mathrm{d}x}
\newcommand{\dsp}{\displaystyle} 
\newcommand{\mathbi}[1]{{\boldsymbol #1}}
\newcommand{\bfn}{\mathbi{n}}
\newcommand{\bv}{\mathbi{v}}
\newcommand{\bF}{\mathbi{F}}
\newcommand{\bvarphi}{\mathbi{\varphi}}
\renewcommand{\d}{{\rm d}}
\newcommand{\dfrontiere}{\d{\rm s}}
\newcommand{\dr}{\partial}
\renewcommand{\div}{{\mathop{\rm div}}}
\newcommand{\grad}{\nabla}
\newcommand{\hdiv}{\mathbi{H}_{\rm div}}
\renewcommand{\O}{\Omega}
\renewcommand{\phi}{\varphi}
\newcommand{\R}{\mathbb R}
\newcommand{\Xmesh}{X_\mesh}
\newcommand{\ba}{\begin{array}{llll}   }
\newcommand{\ea}{\end{array}}
\newcommand{\be}{\begin{equation}}
\newcommand{\ee}{\end{equation}}
\newcommand{\act}[4]{\langle#1,#2\rangle_{#3,#4}}
\author{R. Eymard}
\address{Robert Eymard: Universit\'e Gustave Eiffel, LAMA, (UMR 8050), UPEM, UPEC, CNRS, F-77454, Marne-la-Vallée (France)}
\email{robert.eymard@univ-eiffel.fr}
\author{T. Gallou\"{e}t}
\address{Thierry Gallou\"et: I2M UMR 7373, Aix-Marseille Universit\'e, CNRS, Ecole Centrale de Marseille,
F-13453 Marseille, France}
\email{thierry.gallouet@univ-amu.fr}
\author{R. Herbin}
\address{Rapha\`ele Herbin: I2M UMR 7373, Aix-Marseille Universit\'e, CNRS, Ecole Centrale de Marseille,
F-13453 Marseille, France}
\email{raphaele.herbin@univ-amu.fr}
\title[Optimal error bounds for the TPFA scheme]{Optimal error bounds for the \\Two Point Flux Approximation finite volume scheme}
\keywords{linear elliptic problem with minimal regularity, optimal error estimate, finite volume method, linear parabolic problem}
\subjclass[2010]{65N30,35K15,47A07}
\begin{document}	

\begin{abstract} 
We consider a finite volume scheme with two-point flux approximation (TPFA) to approximate a Laplace problem when the solution exhibits no more regularity than belonging to $H^1_0(\Omega)$. 
We establish in this case some error bounds for both the solution and the approximation of the gradient component orthogonal to the mesh faces. 
This estimate is optimal, in the sense that the approximation error has the same order as that of the sum of the interpolation error and a conformity error. 
A numerical example illustrates the error estimate in the context of a solution with minimal regularity. This result is extended to evolution problems discretized via the implicit Euler scheme in an appendix. 
\end{abstract}

\maketitle

 \tableofcontents

\section{Introduction}

Finite volume methods are standardly used for the approximation of elliptic and parabolic problems in several physical or engineering frameworks such as fluid mechanics, reservoir simulation, heat and mass transfer, biology, biomedical research\ldots
Here we are more specifically  interested  in the two-point flow approximation (TPFA) finite volume scheme for the approximation of the Laplace operator; this is a very popular scheme that has been used since the 60's in the various above mentioned applications, see e.g. \cite{aziz},\cite{COUDIERE20131581} to cite only a few of these. 

Important features of the TPFA scheme are $L^\infty$ stability and monotony \cite{book}. 
Thanks to these properties, stable and convergent numerical schemes based on TPFA were designed for nonlinear problems \cite{can2020behavior} or linear problems with singular source terms \cite{dro2003noncoer}.

A mathematical proof of convergence of the TPFA scheme for the Laplace equation was first given for rectangular grids in  \cite{FORSYTH1988377}, and the  first comprehensive mathematical analysis of this scheme fore more general grids (the so called admissible grids) dates back to 2000  \cite{book}.
Therein, under suitable assumptions on the physical domain and the right hand side, it is proven that if the exact solution of the Poisson equation posed on a open polytopal bounded subset of $\mathbb{R}^d$ with $d=2$ or $3$ is assumed to be in $H^2$, then an error estimate of order 1 can be obtained.
Superconvergence has been observed and proven for a class of triangular meshes, see \cite{droniou-nataraj} and Remark \ref{rem:superconvergence} below. 

Our aim in the present paper is to give an error estimate in the case where the right-hand side belongs to the dual space of the energy space associated to the equation, namely $H^{-1}$ and without any supplementary regularity assumption on the exact solution other than the natural regularity given by the problem, namely $H^1_0$.
The case of a right-hand side in $H^{-1}$ had already been studied in \cite{Droniou_Gallouet_2002}, where the convergence of the scheme is proven for a non coercive elliptic operator. 
In the present paper, we study the same scheme for the Laplace equation, and prove an optimal bound on the error;  more precisely we show that the discretization error is bounded by below and by above, in each case up to a constant, by the sum of two errors, namely the interpolation error and the conformity error, whose precise definitions are given in the sequel. 
We also obtain a bound of the error between an approximate gradient of the solution and the gradient of the exact solution. 
As far as we know, both results are original.

In order to introduce these results, let us first give some definitions. 
Consider $f\in L^2(\Omega)$ and  $\Omega$  an open polytopal bounded subset of $\mathbb{R}^d$ with $d=2$ or $3$; denote by  $\overline{u}\in H^1_0(\Omega)$ the solution of the problem
\begin{equation}
 \forall v\in H^1_0(\Omega),\ \int_\Omega \nabla \overline{u}(x)\cdot \nabla v(x){\rm d} x = \int_\Omega f(x) v(x){\rm d} x.
 \label{eq:laplace_intro}
\end{equation}
As above mentioned, if $\overline{u}\in H^2(\Omega)$, there exists $C>0$ only depending on a regularity factor of the finite volume mesh $\mesh$ such that the solution $u$ of the finite volume scheme satisfies
\begin{equation}\label{eq:introerrest}
  \Vert u - \overline{u}\Vert_{L^2(\Omega)} \le C h_{\mesh} \Vert \overline{u}\Vert_{H^2(\Omega)},
\end{equation}
where $h_{\mesh}$ is the size of the mesh. 

Now when seeking an approximate solution $u_h\in V_h$ of the same Problem \eqref{eq:laplace_intro} by a conforming finite element method, which means that
\begin{equation}
 \forall v_h\in V_h,\ \int_\Omega \nabla u_h(x)\cdot \nabla v_h(x){\rm d} x = \int_\Omega f(x) v_h(x){\rm d} x,
 \label{eq:laplaceconf}
\end{equation}
we get that
\[
  \Vert \overline{u} - u_h\Vert_{H^1_0(\Omega)} = \inf_{v_h\in V_h} \Vert \overline{u} - v_h\Vert_{H^1_0(\Omega)}, 
\]
where:
\begin{itemize}
\item $\Vert \cdot \Vert_{H^1_0(\Omega)}$ is the energy norm associated to Problem \eqref{eq:laplace_intro}, that is to say $\Vert v \Vert_{H^1_0(\Omega)} = \int_\Omega |\nabla v(x)|^2\dx$,
 \item $V_h\subset  H^1_0(\Omega)$ is the (finite dimensional) finite element space.
\end{itemize}
This result, which is a particular case of Céa's lemma for the problem at hand, states that the energy norm of the discretization error is equal to the energy norm of the interpolation error~; it holds without any further regularity assumption than $\overline{u}\in H^1_0(\Omega)$.
It is optimal in the sense that the order of convergence is given by the order of the interpolation error $ \inf_{v\in V_h} \Vert \overline{u} - v\Vert_{H^1_0(\Omega)}$. 

Such an optimal error bound is extended in \cite{gdm} to some nonconforming schemes that fall in the framework of the Gradient Discretization Method (GDM). 
The GDM was invented and analysed some time ago in order to construct a framework for a number of schemes that have been devised in the past twenty years in order to deal with anisotropic diffusion problems and/or distorted meshes. 
Let us briefly recall the basics of the method. 
Let $X_\mesh$ be a finite dimensional vector space of the degrees of freedom generated by a mesh $\mesh$,  $\Pi_{\mesh}  v$ denote a  function that is reconstructed from any $v\in X_\mesh$ and  defined a.e. in $\Omega$, and $\nabla_{\mesh} v$ be an approximate gradient, also reconstructed from $v$.  Then Problem \eqref{eq:laplace_intro} is approximated by $u\in X_\mesh$ such that
\begin{equation}
 \forall v\in X_\mesh,\ \int_\Omega \nabla_{\mesh} u(x)\cdot \nabla_{\mesh} v(x){\rm d} x = \int_\Omega f(x) \Pi_{\mesh} v(x){\rm d} x.
 \label{eq:laplacegdm}
\end{equation}
Note that for non conforming schemes, $\Pi_{\mesh} (X_\mesh) \not\subset H^1_0(\Omega)$, the approximate gradient $\nabla_{\mesh} v$ cannot be defined as the continuous gradient of $\Pi_{\mesh}  v$. 
This difficulty is overcome by defining the conformity error of the scheme by
\begin{equation}
 \zeta_\mesh(\nabla \overline{u}) = \max_{v\in X_\mesh\setminus \{0\} } \frac {\langle \nabla \overline{u},\nabla_{\mesh}  v\rangle_{L^2} +  \langle {\rm div}(\nabla \overline{u}),\Pi_{\mesh}  v\rangle_{L^2}} { \Vert\nabla_{\mesh}  v\Vert_{L^2} }.
 \label{eqdef:conformity}
\end{equation}
An optimal error bound (see \cite[Theorem 2.28]{gdm}) may then be obtained in the spirit of second Strang's lemma \cite{str-72-var}; introducing 
\begin{equation}
\delta_\mesh(\overline{u},v)^2 = \Vert \overline{u} - \Pi_{\mesh}v\Vert_{L^2}^2 + \Vert \nabla\overline{u} - \nabla_{\mesh}v\Vert_{L^2}^2,
\label{eqdef:approximation}
\end{equation}
this bound states that the error $\delta_\mesh(\overline{u},u)$ committed on the solution and its gradient satisfies
\begin{equation}
 \frac 1 2  \Big(\zeta_\mesh(\nabla \overline{u}) + \inf_{v\in X_\mesh}\delta_\mesh(\overline{u},v)\Big) \le \delta_\mesh(\overline{u},u) \le C\Big(\zeta_\mesh(\nabla \overline{u}) + \inf_{v\in X_\mesh} \delta_\mesh(\overline{u},v)\Big).
 \label{optimal_bound}
\end{equation}
without requiring any more regularity than $\overline{u}\in H^1_0(\Omega)$ on the exact solution.
This bound is said to be optimal in the sense that the order of the approximation error $\delta_\mesh(\overline{u},u)$ is the same as that of the sum of the interpolation error   $\inf_{v\in X_\mesh} \delta_\mesh(\overline{u},v)$ and the conformity error $\zeta_\mesh(\nabla \overline{u})$. 

\medskip

In some particular situations (rectangular or acute triangular cells), the TPFA FV scheme can be shown to be a GDM (see \cite[Lemma 13.20]{gdm}), and therefore  the optimal bound \eqref{optimal_bound} holds. 
However, in the case of general admissible meshes of Definition \ref{def:meshdirichlet} below, this is no more the case; for instance the TPFA FV scheme on  Vorono\"{\i} meshes \cite{wias} cannot be seen as a GDM.

\medskip

The aim of the present paper is to prove an error estimate for the TPFA finite volume scheme which
\begin{itemize}
 \item[-]  is optimal in the sense of the bound \eqref{optimal_bound},
 \item[-] still holds for meshes which only satisfy the assumptions of  Definition \ref{def:meshdirichlet},
 \item[-]  applies to general right-hand sides in $H^{-1}(\Omega)$ so that the exact solution belongs only to $H^1_0(\Omega)$,
 \item[-] and is formulated through a stronger norm than the   $L^2(\Omega)$ norm.
\end{itemize}
To this purpose, we first propose to use an approximate gradient in the right-hand side (the so-called ``inflated approximate gradient'') of the scheme in order to deal with for right-hand sides in $H^{-1}(\Omega)$. 
Unfortunately this approximate gradient cannot be used for a GDM scheme, due to the fact that it can only weakly converge. 
A variational formulation of the TPFA FV scheme is then obtained by considering only the normal component of the approximate gradient.
We may then follow the line of thought of \cite[Theorem 2.28]{gdm} and obtain an optimal error bound involving an interpolation error between the approximation of the normal gradient and the normal component of the gradient of the continuous solution. 
This result is, to our knowledge, original for at least these two reasons: it does not require more regularity on the exact solution than the natural regularity obtained from the continuous problem, and it leads to  an error estimate between a consistent reconstruction of an approximate gradient (necessarily different from the inflated approximate gradient) and the full gradient of the continuous solution. 

\medskip

We then apply these two results in the case where the continuous solution is in $H^2$, extending the results of \cite{book} to the convergence of the consistent approximate gradient.

\medskip

A numerical example in the case of a solution with minimal regularity illustrates in Section \ref{sec:num} the optimality of the error estimate; indeed the considered solution is unbounded and does not belong to any $W^{1,p}_0(\Omega)$, for $p>2$.

A short conclusion follows.

\medskip

In Appendix \ref{sec:heat}, following the methods of proof provided by \cite{RDV2}  and \cite{droniou:hal-04183945}, we extend the optimal error bound obtained for the Laplace problem to the transient heat equation, using an implicit Euler time discretization. 

\section{The finite volume scheme for the Laplace problem}\label{sec:steady}

Let $\Omega\subset\mathbb{R}^d$ be an open polytopal domain (with $d=2$ or $d=3$), with boundary $\partial\Omega$, and  $L \in H^{-1}(\Omega)$. 
We consider the Dirichlet problem 
\begin{align}
 & - \Delta \overline{u} = L \label{eq:laplace}\\
 & \overline{u} = 0 \mbox{ on }\partial \Omega.\label{eq:cldiric}
\end{align}
It is wellknown that there exists a unique solution to this problem, which satisfies
\begin{align*}
 & \overline{u} \in H^1_0(\Omega)\\
 & \int_\Omega  \nabla \overline{u} \cdot \nabla \varphi \dx = \act{L}{\varphi}{H^{-1}(\Omega)}{H^1_0(\Omega)}, \; \forall \varphi \in H^1_0(\Omega).
\end{align*}
It is also wellknown that any linear form $L \in H^{-1}(\Omega)$ may be decomposed as $L = f + \mathrm{div}\bF$ with $f\in L^2(\Omega)$ and $\bF\in L^2(\Omega)^d$.
Since $\act{\mathrm{div}\bF}{\varphi}{H^{-1}(\Omega)}{H^1_0(\Omega)} = - \int_\Omega \bF \cdot \nabla \varphi \dx,$   the above weak formulation may be recast as
\begin{align}\label{eq:pbellcont}
 &\overline{u}\in H^1_0(\Omega),\ \forall \varphi\in H^1_0(\Omega),\\ &\int_\Omega \nabla\overline{u}(x)\cdot\nabla\varphi(x) {\rm d}x = \int_\Omega (f(x)\varphi(x) - \bF(x)\cdot\nabla\varphi(x)) {\rm d}x.
\end{align}
Observe that 
\begin{equation} \label{dans:hdiv}
 \nabla\overline{u}+\bF\in \hdiv(\Omega)\mbox{ with }-{\rm div}(\nabla\overline{u}+\bF) = f\mbox{ a.e in }\Omega.
\end{equation}
 
 The remaining part of this section is dedicated to the definition of a finite volume for the approximation of this problem.

Let $\mesh$ be an admissible mesh of $\Omega$ in the following sense, close to that given in \cite[Definition 3.1]{book}.
\begin{definition}[Admissible meshes]\label{def:meshdirichlet}  
An admissible finite volume mesh of $\Omega$, denoted by $\mesh$, is given by a finite family of ``control volumes'', which are disjoint open polytopal convex subsets of $\Omega$, a finite family of disjoint subsets of $\overline{\Omega}$ contained in  hyperplanes of $\R^d$, denoted by  $\mathcal{E}$ (these are the edges (two-dimensional) or sides  (three-dimensional) of the control volumes), with strictly positive ($d-1$)-dimensional measure, and a family of points of $\Omega$ denoted by  $\mathcal{P}$ satisfying the  following properties (in fact, we shall denote, somewhat  incorrectly, by $\mesh$ the family of control volumes):

\begin{enumerate}
 
\item[(i)] The closure of the union of all the control volumes is
$\overline{\Omega}$;  

\item[(ii)]  For  any $K\in \mesh$, there exists  a subset $\mathcal{E}_K$ of $\mathcal{E}$
such that $\dr K  = \overline{K}\setminus K = 
\dsp{\cup_{\sigma \in \mathcal{E}_K}}
\overline{\sigma} $. Furthermore, $\mathcal{E}=\dsp{\cup_{K \in \mesh}\mathcal{E}_K}$.

\item[(iii)] The family  $\mathcal{P} = (x_K)_{K \in \mesh}$ is  such that
$x_K \in K$ for all $K \in \mesh$.

\item[(iv)] The set $\mathcal{E}$ is partitioned into $\mathcal{E} = \mathcal{E}_{\rm ext}\cup\edgesint$. For any $\sigma\in\mathcal{E}$:
\begin{itemize}
 \item[-] either $\sigma\in\mathcal{E}_{\rm ext}$; then there exists exactly one $K\in \mesh$  such that $\sigma\in \mathcal{E}_K$, $\sigma \subset \dr \O$ and the straight line $\mathcal{D}_{K,\sigma}$ going through $x_K$  and orthogonal to $\sigma$ is such that $\mathcal{D}_{K,\sigma} \cap \sigma \neq\emptyset$;
\item[-] or $\sigma\in\edgesint$; then there exist exactly two elements of $\mesh$ denoted $K$ and $L$ such that $\sigma\in \mathcal{E}_K\cap \mathcal{E}_L$, $\dr K\cap \dr L = \overline{\sigma}$ and the straight line $\mathcal{D}_{K,L}$ going through $x_K$ and $x_L$ is orthogonal to $\sigma$ ($\mathcal{D}_{K,L}\cap\overline{\sigma} =\emptyset$ is not excluded); in this case $\sigma$ is denoted by $K|L$. 
\end{itemize}
\end{enumerate}

Figure \ref{fig:mesh} shows an example of mesh with a few notations.
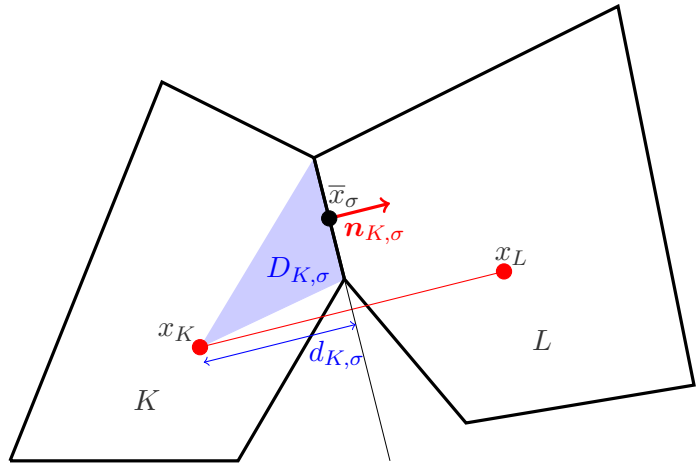
\begin{figure}[!ht]
  \begin{center}
  \begin{tikzpicture}[scale=1]

   \draw[-, blue!20, pattern=north west lines, pattern color=blue!30, opacity=0.0] (3.5,2.5)--(5.4,3.4)--(5,5); 
   \draw[-, very thick, black](1,1)--(4,1)--(5.4,3.4)--(5,5)--(3,6)--(1,1);
   \draw[-, very thick, black](5,5)--(5.4,3.4)--(7,1.5)--(10,2)--(9,7)--(5,5);
   \draw[-, , red](3.5,2.5)--(7.5,3.5);
   \draw[-, very thin, black](5,5)--(6,1);
   \draw[<->, , blue](3.55,2.3)--(5.55,2.8);
   \draw[->,very thick, red](5.2,4.2)--(6,4.4);
   \path(5.3,2.4) node[blue!100]{$d_{K,\sigma}$};
   \path(5.8,4) node[red!100]{${\bm n}_{K,\sigma}$};

   \draw[fill,red](3.5,2.5) circle (0.1);
   \draw[fill,red](7.5,3.5) circle (0.1);
   \draw[fill,black](5.2,4.2) circle (0.1);
   
   \path(3.2,2.7) node[black!80]{$x_K$};
   \path(7.6,3.7) node[black!80]{$x_L$};
   \path(4.8,3.5) node[blue!100]{$D_{K,\sigma}$};
   \path(2.8,1.8) node[black!80]{$K$};
   \path(8,2.6) node[black!80]{$L$};
   \path(5.4,4.5) node[black!80]{$\overline{x}_\sigma$};
   
  \end{tikzpicture}
  \caption{Two neighbouring control volumes of an admissible mesh. }\label{fig:mesh}
  \end{center}
\end{figure}

The following notations are used
\begin{itemize}
 \item $|A|$ the $d$ (resp. $(d-1)$)-dimensional  measure of any subset $A$ of $\R^d$ (resp. $\R^{d-1})$.
 \item $\overline{x}_\sigma$: center of gravity of $\sigma \in \mathcal{E}$,
  \item $d_{K,\sigma}>0$:  orthogonal distance from $x_K$ to $\sigma$, for $K\in \mesh$ and $\sigma\in \mathcal{E}_K$,
  \item $D_{K,\sigma}$:  cone with vertex $x_K$ and basis $\sigma$, for $K\in \mesh$ and $\sigma\in \mathcal{E}_K$, so that $|D_{K,\sigma}| = \frac{|\sigma|d_{K,\sigma}} d,$ 
  \item ${\bm n}_{K,\sigma}$:  unit vector, normal to $\sigma$ and outward to $K$
  \item $h_{\mesh}= \sup\{{\rm diam}(K),$ $K\in \mesh\}$:   mesh size,
  \item $\theta_{\mesh}= \inf\{\frac {d_{K,\sigma}}{{\rm diam}(K)},$ $K\in \mesh,\ \sigma\in\mathcal{E}_K\}$: mesh regularity parameter,
  \item $\Xmesh$: set of all real families $u = \Big((u_K)_{K\in \mesh},(u_\sigma)_{\sigma\in \mathcal{E}}\Big)$ such that $u_\sigma = 0$ if $\sigma\in\mathcal{E}_{\rm ext}$.
\end{itemize}
With a slight abuse of notation, for any $u\in \Xmesh$, we also denote by $u$ the element of $L^2(\Omega)$ which is a.e. equal to the value $u_K$ in any  $K\in\mesh$.
%
%
%
%

\end{definition}
Let us observe that this definition leads to admissible meshes in the sense of \cite[Definition 3.1]{book}, but is slightly more restrictive since, for the sake of simplicity and as in \cite{Droniou_Gallouet_2002}, we do not allow $x_K\in \overline{\sigma}$ if $\sigma\in \mathcal{E}_K$.
There are ways to relax this assumption by eliminating the face unknowns in the scheme, but this leads to additional technical difficulties. 

As mentioned in the introduction, our analysis requires several discrete derivation operators which we now precisely define.

\begin{definition}[Discrete derivative and gradients]\label{def:gradients} ~

\begin{itemize}
 \item   $G_{\mesh}$ : normal discrete derivative 
\vspace{-.05cm}
\begin{align}
  G_{\mesh}: & \Xmesh\to L^2(\Omega) \nonumber\\
            & u \mapsto  G_{\mesh}u, \mbox{ with } G_{\mesh}u(x) = \frac {u_\sigma - u_K} {d_{K,\sigma}} \text{ for a.e. } x\in D_{K,\sigma}.
\label{eq:defgm}
\end{align}
\vspace{-.2cm}
\item   $\nabla_{\mesh}$ : inflated discrete gradient  
\vspace{-.05cm}\begin{align}
\nabla_{\mesh}: & \Xmesh\to (L^2(\Omega))^d \nonumber\\
            & u \mapsto  \nabla_{\mesh} u, \mbox{ with } \nabla_{\mesh} u(x) = d\frac {u_\sigma - u_K} {d_{K,\sigma}} {\bm n}_{K,\sigma} \text{ for a.e. } x\in D_{K,\sigma}.
\label{eq:defnablam}
\end{align}

\vspace{-.05cm}   
\item $\widehat\nabla_{\mesh}$: consistent discrete gradient  
\vspace{-.2cm}\begin{align}
\widehat  \nabla_{\mesh}: & \Xmesh\to (L^2(\Omega))^d \nonumber\\
            & u \mapsto  \widehat \nabla_{\mesh} u, \mbox{ with } \widehat \nabla_{\mesh} u(x) = d\frac 1 {|K|} \sum_{\sigma\in\mathcal{E}_K} |\sigma|(\overline{x}_\sigma - x_K)  \frac {u_\sigma - u_K} {d_{K,\sigma}}\text{ for a.e. } x\in D_{K,\sigma}.
\label{eq:defgradconsist}
\end{align}
\vspace{-.2cm}
 
\end{itemize}
\end{definition}

\begin{remark}[On the inflated and consistent gradients]
 The \emph{inflated discrete gradient}, first introduced in \cite[Definition 2]{ar4} only involves the normal discrete gradient, but with a factor $d$, hence the term {\it inflated}; it also appears, but somewhat hidden, in the weak formulation (2.6) of the FV scheme in \cite{Droniou_Gallouet_2002}.
 The consistent discrete gradient $\widehat\nabla_{\mesh} u$ was first introduced in \cite[Definition 2.3]{vf100} in one of the first attempts to generalize finite volume schemes to anisotropic diffusion problems. 
\end{remark}

Following \cite{Droniou_Gallouet_2002}, integrating \eqref{eq:laplace} on a cell $K$ and (formally) integrating by parts yields the following balance equation on each cell $K$:
\[
-  \sum_{\sigma\in \mathcal{E}_K} \int_\sigma \nabla u \cdot \bfn_{K,\sigma} \dfrontiere  = \int_K f \dx + \sum_{\sigma\in \mathcal{E}_K} \int_\sigma \bF \cdot \bfn_{K,\sigma} \dfrontiere .
\]
Introducing the discrete unknowns $(u_K)_{K \in \mesh}$ and $(u_\sigma)_{\sigma \in \edges}$, and following \cite{Droniou_Gallouet_2002}, we propose the following scheme:
\begin{subequations}\label{strong-fv-scheme}
\begin{align}
& \forall K \in \mesh,  \; -  \sum_{\sigma\in \mathcal{E}_K} |\sigma| \frac{u_\sigma - u_K}{d_{K,\sigma}} =   \int_K f \dx +  \sum_{\sigma\in \mathcal{E}_K} |\sigma| \frac 1{|D_{K,\sigma}|}  \int_{D_{K,\sigma}} \bF\cdot \bfn_{K,\sigma},\label{strong-bal}\\
&\forall \sigma = K|L \in \edgesint, \nonumber \\
&\qquad \frac{u_\sigma - u_K}{d_{K,\sigma}} + \frac 1{|D_{K,\sigma}|} \int_{D_{K,\sigma}} \!\!\!\!\!\!\bF \cdot \bfn_{K,\sigma}\dx = - \frac{u_\sigma - u_L}{d_{L,\sigma}} - \frac 1{|D_{L,\sigma}|}  \int_{D_{L,\sigma}}  \!\!\!\!\!\! \bF \cdot \bfn_{L,\sigma}  \dx.\label{strong-cons}\\
& \forall \sigma \in \edgesext, \; u_\sigma = 0. \label{strong-cl}
\end{align}
\end{subequations}
Equation \eqref{strong-bal} is the discretization of the local mass balance on the cell $K$, while \eqref{strong-cons} expresses the conservativity of the discrete fluxes. 
By \cite[Theorem 2.1]{Droniou_Gallouet_2002}, there exists a unique solution to the scheme \eqref{strong-fv-scheme}.
Moreover, the scheme admits a weak formulation \cite[Lemma 2.1]{Droniou_Gallouet_2002}.

\begin{lem}[Weak formulation of the scheme]
 The scheme \eqref{strong-fv-scheme} is equivalent to the following weak formulation: 
\begin{align}
&\mbox{Find } u \in \Xmesh \mbox{ such that }, \mbox{ for any }v = \left((v_K)_{K \in \mesh},(v_\sigma)_{\sigma \in \edges}  \right)\in \Xmesh,
 \nonumber \\
\label{eq:fvscheme}
& d \int_\Omega G_{\mesh}u(x)G_{\mesh}v(x) {\rm d}x = \int_\Omega \left(f(x)v(x) - \bF(x)\cdot \nabla_{\mesh}v(x)\right) {\rm d}x, 
\end{align}
where $G_\mesh v$ and $ \nabla_{\mesh}v$ are the discrete derivative and gradient given in Definition \ref{def:gradients}.
\end{lem}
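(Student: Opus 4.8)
The plan is to derive the three relations \eqref{strong-bal}--\eqref{strong-cl} from the weak identity \eqref{eq:fvscheme} (and conversely) by unfolding both sides into sums indexed by the cell–face pairs $(K,\sigma)$ and then performing a discrete integration by parts. First I would rewrite the left-hand side of \eqref{eq:fvscheme}: since by \eqref{eq:defgm} the function $G_{\mesh}u$ is constant on each cone $D_{K,\sigma}$ with value $(u_\sigma-u_K)/d_{K,\sigma}$, and since $|D_{K,\sigma}|=|\sigma|\,d_{K,\sigma}/d$, the prefactor $d$ cancels one power of $d_{K,\sigma}$ and one obtains
\[
d\int_\O G_{\mesh}u\,G_{\mesh}v\dx=\sum_{K\in\mesh}\sum_{\sigma\in\mathcal E_K}|\sigma|\,\frac{(u_\sigma-u_K)(v_\sigma-v_K)}{d_{K,\sigma}}.
\]
For the right-hand side, using \eqref{eq:defnablam} and the identity $d/d_{K,\sigma}=|\sigma|/|D_{K,\sigma}|$ coming from the same volume relation, together with the fact that $v$ seen as an $L^2$ function equals $v_K$ on $K$, I would get
\[
\int_\O\big(fv-\bF\cdot\nabla_{\mesh}v\big)\dx=\sum_{K\in\mesh}v_K\!\int_K f\dx-\sum_{K\in\mesh}\sum_{\sigma\in\mathcal E_K}(v_\sigma-v_K)\,\frac{|\sigma|}{|D_{K,\sigma}|}\!\int_{D_{K,\sigma}}\!\!\bF\cdot\bfn_{K,\sigma}\dx.
\]

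Next I would perform the discrete integration by parts on both sides: in each double sum I split the factor $(v_\sigma-v_K)$ and regroup, collecting separately the terms multiplying the cell values $v_K$ and those multiplying the face values $v_\sigma$. The essential bookkeeping point is that each interior face $\sigma=K|L$ is met twice, once through $\mathcal E_K$ and once through $\mathcal E_L$, while each exterior face carries $v_\sigma=0$ by the very definition of $\Xmesh$; hence only the interior faces survive in the $v_\sigma$-part, and each contributes the sum of the two one-sided flux terms coming from $K$ and $L$.

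Finally, since the families $(v_K)_{K\in\mesh}$ and $(v_\sigma)_{\sigma\in\edgesint}$ may be prescribed independently (equivalently, one tests against the canonical basis of $\Xmesh$), the rearranged identity holds for every $v\in\Xmesh$ if and only if the coefficient of each $v_K$ and of each $v_\sigma$ vanishes. Matching the coefficient of $v_K$ reproduces the cell balance \eqref{strong-bal}; matching the coefficient of $v_\sigma$ for $\sigma=K|L$ yields exactly the flux conservativity \eqref{strong-cons}, after multiplying that relation by the positive factor $|\sigma|$; and the Dirichlet condition \eqref{strong-cl} is already built into $\Xmesh$. Because every manipulation (the volume identity, the reindexing, and the ``for all $v$ iff all coefficients vanish'' step) is an equivalence, this simultaneously proves both implications. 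The only genuinely delicate step is the discrete integration by parts, namely carrying out the reindexing of the cell–face double sums so that interior faces are counted from both adjacent cells with the correct signs; the remainder is a direct computation resting on $|D_{K,\sigma}|=|\sigma|\,d_{K,\sigma}/d$.
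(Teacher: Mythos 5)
Your proposal is correct and follows essentially the same route as the paper: both expand the weak form \eqref{eq:fvscheme} into the cell–face double sum \eqref{varfor-scheme} using $|D_{K,\sigma}|=|\sigma|\,d_{K,\sigma}/d$, and both exploit the fact that the identity holds for all $v\in\Xmesh$ exactly when the coefficient of each $v_K$ yields \eqref{strong-bal} and the coefficient of each interior $v_\sigma$ yields \eqref{strong-cons} (the paper phrases this by testing against the canonical basis, and delegates the reindexing of the forward direction to \cite[Lemma 2.1]{Droniou_Gallouet_2002}, which you instead carry out explicitly). Your computations check out, so no gap.
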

\begin{proof}
Owing to the definitions \eqref{eq:defgm} and \eqref{eq:defnablam} of  $G_\mesh v$ and $\nabla_{\mesh}v$ and noting that $|D_{K,\sigma}| = \frac {|\sigma|\, d_{K,\sigma}} {d}$,  the scheme \eqref{eq:fvscheme} also reads
\begin{equation}
 \sum_{K\in\mesh}  \sum_{\sigma\in\mathcal{E}_K} \frac {|\sigma|}{d_{K,\sigma}} (u_\sigma - u_K)(v_\sigma - v_K) =  \sum_{K\in\mesh}\int_K f v_K \dx-   \sum_{K\in\mesh}\sum_{\sigma\in \mathcal{E}_K} \int_{D_{K,\sigma}}\bF   d \frac {v_\sigma - v_K} {d_{K,\sigma}}   \cdot  \bfn_{K,\sigma}.
 \label{varfor-scheme}
\end{equation}
The proof that if $u$ is a solution to \eqref{strong-fv-scheme} then $u$ satisfies \eqref{varfor-scheme}  may be found in \cite[Lemma 2.1]{Droniou_Gallouet_2002}. 
Conversely, letting $v_K=1$ and $v_\sigma=0$ in \eqref{varfor-scheme} leads to  \eqref{strong-bal} and letting $v_K=0$ and $v_\sigma=1$ in \eqref{varfor-scheme} leads to \eqref{strong-cons}. 
The condition \eqref{strong-cl} is ensured by the fact that $u \in \Xmesh$.
\end{proof}

 Note that the inflated approximate gradient is expected to weakly converge in $L^2$ toward the gradient of the exact solution but can never converge in $L^2$ toward the gradient of the exact solution except if the exact solution is $0$ (see \cite[Lemma 2 and Remark 2]{ar4}). The convergence in $L^2$ of the consistent approximate gradient is proved in \cite{vf100}, in the case where $\bF = 0$, and using a modified finite volume scheme in order to handle anisotropic diffusion problems, under the condition that $\theta_{\mesh}$ is uniformly bounded by below.

\section{Convergence analysis}

\subsection{An optimal error estimate}

 A norm on $\Xmesh$ is defined by 
 \begin{equation}
  \label{eqdef:normHM}
   \Vert u\Vert_{\mesh}^2 = d \Vert G_{\mesh}u\Vert_{L^2(\Omega)}^2 = \frac 1 d \Vert \nabla_{\mesh}u\Vert_{L^2(\Omega)^d}^2 = \sum_{K\in \mesh} \sum_{\sigma \in \edges_K} \frac{|\sigma|}{d_{K,\sigma}} (u_\sigma - u_K)^2.
\end{equation}

The discrete Poincaré inequality \cite[Lemma 9.1]{book} states that for any piecewise function $u$ equal to $u_K$ on the cell $K$, 
 \begin{equation}
   \Vert u\Vert_{L^2(\Omega)} \le  {\rm diam}(\Omega) \Big(\sum_{\sigma = K|L \in \edgesint} \frac{|\sigma|}{d_{K,\sigma}+d_{L,\sigma}} (u_K - u_L)^2 +\sum_{\sigma  \in \edgesext\cap \edges_K} \frac{|\sigma|}{d_{K,\sigma}} u_K ^2\Big).
 \end{equation}
Now consider $u\in \Xmesh$ satisfying
 \begin{equation}\label{eq:usigma}
  \frac {u_\sigma - u_K} {d_{K,\sigma}} + \frac {u_\sigma - u_L} {d_{L,\sigma}} = 0,\hbox{ for all } \sigma  = K|L \in \edgesint;
  \end{equation}
this yields $\frac{(u_\sigma - u_K)^2}{d_{K,\sigma}}+\frac{(u_\sigma - u_L)^2}{d_{L,\sigma}}  = \frac{(u_L - u_K)^2}{d_{K,\sigma}+d_{L,\sigma}}$ and therefore, recalling that $u_\sigma =0$ for any $\sigma  \in \edgesext$,
\[
 \sum_{\sigma = K|L \in \edgesint} \frac{|\sigma|}{d_{K,\sigma}+d_{L,\sigma}} (u_K - u_L)^2 +\sum_{\sigma  \in \edgesext\cap \edges_K} \frac{|\sigma|}{d_{K,\sigma}} u_K ^2= \sum_{K\in \mesh} \sum_{\sigma \in \edges_K} \frac{|\sigma|}{d_{K,\sigma}} (u_\sigma - u_K)^2.
\]
Observe then that the minimal value of the function $s \mapsto \frac {(s - u_K)^2} {d_{K,\sigma}} + \frac {(s - u_L)^2} {d_{L,\sigma}}$ is obtained for $s = u_\sigma$ such that \eqref{eq:usigma} holds, so that for any $u\in X_\mesh$, whatever the values $u_\sigma$ for  any $\sigma  = K|L \in \edgesint$,  
 \[
    \Vert u\Vert_{\mesh}^2 \ge \sum_{\sigma = K|L \in \edgesint} \frac{|\sigma|}{d_{K,\sigma}+d_{L,\sigma}} (u_K - u_L)^2+\sum_{\sigma  \in \edgesext\cap \edges_K} \frac{|\sigma|}{d_{K,\sigma}} u_K ^2.
 \]
Hence
 \begin{equation}\label{eq:poindis}
 \Vert u\Vert_{L^2(\Omega)} \le {\rm diam}(\Omega) \Vert u\Vert_{\mesh} = {\rm diam}(\Omega) \sqrt{d}\Vert G_{\mesh}u\Vert_{L^2(\Omega)}, \ \forall u\in \Xmesh.
 \end{equation}
 Note that \eqref{eq:usigma} expresses the conservativity of the discrete fluxes \eqref{strong-cons} in the case $\bF=0$.

Next, we recall that the space $\hdiv(\Omega) =\{{\bm\varphi}\in L^2(\Omega)^d: \ \div {\bm\varphi} \in L^2(\Omega)\}$ is a Hilbert space when equipped with the norm
\[
 \Vert {\bm\varphi}\Vert_{\hdiv}^2 =  \Vert {\bm\varphi}\Vert_{L^2(\Omega)^d}^2 +  \Vert \div{\bm\varphi}\Vert_{L^2(\Omega)}^2.
\]

We introduce the conformity error: $\zeta_{\mesh}: \hdiv(\Omega)\to \mathbb{R}$, defined by
\begin{equation}\label{eq:defconfer}
 \zeta_{\mesh}({\bm\varphi}) = \sup\{ \int_\Omega ({\rm div}{\bm\varphi}(x)v(x) +{\bm\varphi}(x)\cdot \nabla_{\mesh}v(x)) {\rm d}x,  \; v\in \Xmesh\hbox{ with }\Vert v\Vert_{\mesh} = 1\}.
 \end{equation}

Let $\gnc_{\mesh}:H_0^1(\Omega)\to L^2(\Omega)$, called in the following the {\it mean normal gradient} of a function belonging to $H_0^1(\Omega)$, defined for $\varphi\in H^1_0(\Omega)$ by
\begin{equation}\label{eq:defogm}
 \gnc_{\mesh}\varphi(x) = \frac 1 {|D_{K,\sigma}|}\int_{D_{K,\sigma}}\nabla\varphi(x)\cdot {\bm n}_{K,\sigma}{\rm d}x \text{ for a.e. } x\in D_{K,\sigma},
 \end{equation}
 Observe that Definition \eqref{eq:defogm} implies the following equality, for $\overline{u} \in H^1_0(\Omega)$ and $v \in H_\mesh$,
 \begin{equation}
  \label{eq:gradgradM-GMGM}
  \int_\Omega \nabla \overline{u}\cdot \nabla_{\mesh}v(x){\rm d}x =  d \int_\Omega \gnc_{\mesh}\overline{u}(x)G_{\mesh}v(x)  {\rm d}x.
 \end{equation}
Next, we define a distance between any function $v\in \Xmesh$ and $\varphi\in H^1_0(\Omega)$ in the following way:
\begin{equation}\label{eq:definter}
 \delta_{\mesh}(\varphi,v) =\frac 1 {{\rm diam}(\Omega)} \Vert   \varphi- v\Vert_{L^2(\Omega)} + \sqrt{d}\Vert \gnc_{\mesh}\varphi - G_{\mesh}v\Vert_{L^2(\Omega)}. 
\end{equation}
We may then define a (generalized) interpolation error by:
\begin{equation}\label{eq:definterpol}
 \forall \varphi\in H^1_0(\Omega),\ \mathcal I_{\mesh}(\varphi) = \inf_{v\in \Xmesh}\delta_{\mesh}(\varphi,v). 
\end{equation}
The following theorem yields an optimal error bound, in the sense that the distance between the solution $\bar u$ to \eqref{eq:laplace} and the solution $u$ to \eqref{eq:fvscheme} is bounded by above and below by the sum of the conformity error and the interpolation error.

\begin{thm}[Optimal error bound for the approximation of the elliptic problem  \eqref{eq:pbellcont}] \label{thm:errest}
 Let $\overline{u}$ be the solution to the elliptic problem \eqref{eq:pbellcont}. 
 Then the solution $u$ to the numerical scheme \eqref{eq:fvscheme} satisfies
\begin{equation}\label{eq:errest}
 \frac 1 2 \Big(\zeta_{\mesh}(\nabla\overline{u}+\bF) + \mathcal I_{\mesh}(\overline{u})\Big)
 \le \delta_{\mesh}(\overline{u},u) 
 \le 3 \Big(\zeta_{\mesh}(\nabla\overline{u}+\bF) + \mathcal I_{\mesh}(\overline{u})\Big).
\end{equation}
\end{thm}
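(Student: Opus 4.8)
The plan is to follow a second-Strang-lemma strategy adapted to the nonconforming normal-gradient setting, with the whole argument hinging on a single \emph{error equation} playing the role of Galerkin orthogonality. Writing $\bG = \nabla\overline{u}+\bF$, recall from \eqref{dans:hdiv} that $\bG\in\hdiv(\Omega)$ with $\div\bG=-f$. First I would establish, for every $v\in\Xmesh$,
\begin{equation*}
 d\int_\Omega(\gnc_\mesh\overline{u}-G_\mesh u)\,G_\mesh v\dx = \int_\Omega\big(\div\bG\,v + \bG\cdot\nabla_\mesh v\big)\dx.
\end{equation*}
This is obtained by starting from the right-hand side, substituting $\div\bG=-f$, splitting $\bG\cdot\nabla_\mesh v = \nabla\overline{u}\cdot\nabla_\mesh v + \bF\cdot\nabla_\mesh v$, rewriting the first term via the mean-normal-gradient identity \eqref{eq:gradgradM-GMGM} as $d\int_\Omega\gnc_\mesh\overline{u}\,G_\mesh v\dx$, and finally using the scheme \eqref{eq:fvscheme} to replace $-\int_\Omega fv\dx+\int_\Omega\bF\cdot\nabla_\mesh v\dx$ by $-d\int_\Omega G_\mesh u\,G_\mesh v\dx$. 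By the very definition \eqref{eq:defconfer}, the right-hand side of this error equation is bounded by $\zeta_\mesh(\bG)\,\Vert v\Vert_\mesh$.

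For the lower bound I would argue directly. Since $u\in\Xmesh$ is an admissible competitor in \eqref{eq:definterpol}, one has $\mathcal I_\mesh(\overline{u})\le\delta_\mesh(\overline{u},u)$. For the conformity term, I would bound the right-hand side of the error equation by Cauchy--Schwarz: for $v$ with $\Vert v\Vert_\mesh=1$, so that $\Vert G_\mesh v\Vert_{L^2(\Omega)}=1/\sqrt d$ by \eqref{eqdef:normHM}, the left-hand side is at most $\sqrt d\,\Vert\gnc_\mesh\overline{u}-G_\mesh u\Vert_{L^2(\Omega)}$. Taking the supremum over such $v$ gives $\zeta_\mesh(\bG)\le\sqrt d\,\Vert\gnc_\mesh\overline{u}-G_\mesh u\Vert_{L^2(\Omega)}\le\delta_\mesh(\overline{u},u)$. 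Adding the two inequalities yields $\zeta_\mesh(\bG)+\mathcal I_\mesh(\overline{u})\le 2\,\delta_\mesh(\overline{u},u)$, which is the claimed lower bound.

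The upper bound is the substantive part and proceeds by comparison with an arbitrary $w\in\Xmesh$. Testing the error equation with $v=u-w$ and writing $\gnc_\mesh\overline{u}-G_\mesh u=(\gnc_\mesh\overline{u}-G_\mesh w)-G_\mesh(u-w)$, I would isolate $d\Vert G_\mesh(u-w)\Vert_{L^2(\Omega)}^2=\Vert u-w\Vert_\mesh^2$ and bound the two remaining terms by $\sqrt d\,\Vert\gnc_\mesh\overline{u}-G_\mesh w\Vert_{L^2(\Omega)}\,\Vert u-w\Vert_\mesh$ (Cauchy--Schwarz plus \eqref{eqdef:normHM}) and by $\zeta_\mesh(\bG)\,\Vert u-w\Vert_\mesh$, respectively. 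Dividing by $\Vert u-w\Vert_\mesh$ gives the key stability estimate
\begin{equation*}
 \Vert u-w\Vert_\mesh \le \sqrt d\,\Vert\gnc_\mesh\overline{u}-G_\mesh w\Vert_{L^2(\Omega)} + \zeta_\mesh(\bG).
\end{equation*}
I would then reassemble $\delta_\mesh(\overline{u},u)$ by triangle inequalities: for the gradient part, $\sqrt d\,\Vert\gnc_\mesh\overline{u}-G_\mesh u\Vert_{L^2(\Omega)}\le\sqrt d\,\Vert\gnc_\mesh\overline{u}-G_\mesh w\Vert_{L^2(\Omega)}+\Vert u-w\Vert_\mesh$; for the $L^2$ part, the discrete Poincaré inequality \eqref{eq:poindis} gives $\tfrac1{{\rm diam}(\Omega)}\Vert u-w\Vert_{L^2(\Omega)}\le\Vert u-w\Vert_\mesh$. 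Inserting the stability estimate into both yields $\delta_\mesh(\overline{u},u)\le 3\,\delta_\mesh(\overline{u},w)+2\,\zeta_\mesh(\bG)$, and taking the infimum over $w$ produces the upper bound with constant $3$ (absorbing $2\le 3$). The only genuine difficulty is the correct derivation and bookkeeping of the error equation in the first step, since it is there that the three discrete gradients $G_\mesh$, $\nabla_\mesh$ and $\gnc_\mesh$ must be reconciled through \eqref{eq:gradgradM-GMGM}; once this identity is in hand, the remaining estimates are routine applications of Cauchy--Schwarz, the norm identity \eqref{eqdef:normHM} and the discrete Poincaré inequality.
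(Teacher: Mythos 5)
Your proposal is correct and follows essentially the same route as the paper: the same error identity obtained by combining \eqref{dans:hdiv}, the scheme \eqref{eq:fvscheme} and \eqref{eq:gradgradM-GMGM}, the same test choice $v=u-w$ yielding the stability estimate \eqref{inter}, and the same Poincaré/triangle-inequality bookkeeping giving the constants $2$ and $3$. The lower bound is also argued identically (admissibility of $u$ for the interpolation error, Cauchy--Schwarz for the conformity error).
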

\begin{remark}[On the definition of $\delta_{\mesh}(\varphi,v)$]\label{rem:compdelta}
 If we replace the definition \eqref{eq:definter} of $\delta_{\mesh}$ by
\[
 \delta_{\mesh,w}(\varphi,v) =\frac 1 {{\rm diam}(\Omega)} \Vert  w- v\Vert_{L^2(\Omega)} + \sqrt{d}\Vert \gnc_{\mesh}\varphi - G_{\mesh}v\Vert_{L^2(\Omega)},
 \]
 with $w \in L^2(\Omega)$ possibly depending on $\mesh$, then Theorem \ref{thm:errest} still holds. 
 Nevertheless, in order for the bound \eqref{eq:errest} to yield an error estimate,   
 the interpolation error $\mathcal I_{\mesh}(\overline{u})$ must tend to 0 with the size of the mesh. 
 This implies that $w$ tends to $\overline{u}$ in $L^2(\Omega)$ with the size of the mesh; this is indeed the case if, for instance, $w$ is the piecewise constant function defined by the mean value over the cells, or, for regular enough functions, by the value of $\overline{u}$ at point $x_K$ (for the cell $K$).
 
 \end{remark}
 \begin{remark}[On the definition of  $\gnc_{\mesh}$]\label{rem:defgnc}
 The results of  Theorem \ref{thm:errest} remain true if we define  $\gnc_{\mesh}:H_0^1(\Omega)\to L^2(\Omega)$ by
\begin{equation}\label{eq:defogmbis}
 \gnc_{\mesh}\overline{u}(x) = \nabla\overline{u}(x)\cdot {\bm n}_{K,\sigma} \text{ for a.e. } x\in D_{K,\sigma},
 \end{equation}
 that is by the value of the normal gradient instead of its mean value in $D_{K,\sigma}$. 
However, this choice leads to larger values for $ \delta_{\mesh}(\overline{u},u)$.
Moreover, it leads to expressions which can be more difficult to evaluate  in the numerical implementation.
 \end{remark}

\begin{proof}[Proof of Theorem \ref{thm:errest}]
From the definition \eqref{eq:defconfer} of the conformity error $\zeta_\mesh$ and owing to \eqref{dans:hdiv}, we obtain that
 \[
 \int_\Omega \left((\nabla\overline{u}+\bF)(x)\cdot \nabla_{\mesh}v(x)-f(x)v(x)\right) {\rm d}x\le  \zeta_{\mesh}(\nabla\overline{u}+\bF) \Vert v\Vert_{\mesh}, \; \forall v\in  \Xmesh.
 \]
 Since $u$ is the solution to \eqref{eq:fvscheme}, we get
\[
 \int_\Omega (\nabla\overline{u}(x)\cdot \nabla_{\mesh}v(x)-d\, G_{\mesh}u(x)G_{\mesh}v(x)) {\rm d}x\le  \zeta_{\mesh}(\nabla\overline{u}+\bF) \Vert v\Vert_{\mesh},\; \forall v\in  \Xmesh.
 \]
 Owing to \eqref{eq:gradgradM-GMGM}, we have
 \[
 \int_\Omega d \,(\gnc_{\mesh}\overline{u}(x)- G_{\mesh}u(x))G_{\mesh}v(x) {\rm d}x=\int_\Omega d \,(\gnc_{\mesh}\overline{u}(x)- G_{\mesh}u(x))G_{\mesh}v(x) {\rm d}x\le  \zeta_{\mesh}(\nabla\overline{u}+\bF) \Vert v\Vert_{\mesh}.
 \]
 This inequality, together with the triangle and the Cauchy-Schwarz inequalities yield that for any $w\in  \Xmesh$, 
 \begin{multline*}
 \int_\Omega d (G_{\mesh}w(x) - G_{\mesh}u(x))G_{\mesh}v(x) {\rm d}x\\ 
 \le  \zeta_{\mesh}(\nabla\overline{u}+\bF) \Vert v\Vert_{\mesh} + d \Vert \gnc_{\mesh}\overline{u} - G_{\mesh}w\Vert_{L^2(\Omega)}\Vert  G_{\mesh}v\Vert_{L^2(\Omega)},\; \forall v\in  \Xmesh,
 \end{multline*}
Choosing $v = w - u$ and simplifying by $\sqrt{d}\Vert  G_{\mesh}v\Vert_{L^2(\Omega)}$, we get
\begin{equation}\label{inter}
   \sqrt{d} \Vert G_{\mesh}u - G_{\mesh}w\Vert_{L^2(\Omega)}
 \le  \zeta_{\mesh}(\nabla\overline{u}+\bF)   +  \sqrt{d} \Vert \gnc_{\mesh}\overline{u} - G_{\mesh}w\Vert_{L^2(\Omega)},\; \forall w\in  \Xmesh.
\end{equation}
 Owing to the Poincaré inequality \eqref{eq:poindis}, this latter inequality implies that
 \[
   \frac 1 {{\rm diam}(\Omega)}  \Vert u - w\Vert_{L^2(\Omega)}
 \le \zeta_{\mesh}(\nabla\overline{u}+\bF)   +  \sqrt{d}  \Vert \gnc_{\mesh}\overline{u} - G_{\mesh}w\Vert_{L^2(\Omega)},
 \]
 and thanks to the triangle inequality,
 \begin{multline*}
 \frac 1 {{\rm diam}(\Omega)} \Vert u - \overline{u}\Vert_{L^2(\Omega)}
 \le \zeta_{\mesh}(\nabla\overline{u}+\bF)   +  \sqrt{d}\Vert \gnc_{\mesh}\overline{u} - G_{\mesh}w\Vert_{L^2(\Omega)}+  \frac 1 {{\rm diam}(\Omega)} \Vert \overline{u}-w\Vert_{L^2(\Omega)}.
 \end{multline*}
Again by the triangle inequality, we get from \eqref{inter} that
\[
   \sqrt{d} \Vert G_{\mesh}u - \gnc_{\mesh}\overline{u}\Vert_{L^2(\Omega)}
 \le  \zeta_{\mesh}(\nabla\overline{u}+\bF)   +  2\sqrt{d} \Vert \gnc_{\mesh}\overline{u} - G_{\mesh}w\Vert_{L^2(\Omega)}.
 \]
Adding the two previous inequalities and taking the infimum on $w\in \Xmesh$ yields the inequality on the right of \eqref{eq:errest}.
 
 \medskip
 
 Let us now prove the left inequality of \eqref{eq:errest}. 
 Let $v\in  \Xmesh$; from \eqref{eq:gradgradM-GMGM} and the FV scheme \eqref{eq:fvscheme}, and owing to the definitions \eqref{eq:definter} of and \eqref{eqdef:normHM} of the norm, we get that,
 \begin{align*}
   \int_\Omega ((\nabla\overline{u}+\bF)(x)\cdot \nabla_{\mesh}v(x) -f(x)v(x)) {\rm d}x &= \int_\Omega d (\gnc_{\mesh}\overline{u}(x)- G_{\mesh}u(x))G_{\mesh}v(x) {\rm d}x\\
&\le  \delta_{\mesh}(\overline{u},u)  \Vert v\Vert_{\mesh}.
 \end{align*}
 Passing to the supremum on  the functions $v \in H_\mesh$ which are such that $\Vert v \Vert_\mesh = 1$ yields
 \[
   \zeta_{\mesh}(\nabla\overline{u}+\bF)  \le  \delta_{\mesh}(\overline{u},u),
 \]
 and since $
  \mathcal I_\mesh \overline{u} =  \inf_{v\in \Xmesh}\delta_{\mesh}(\overline{u},v) \le \delta_{\mesh}(\overline{u},u),
$
we get the  left part of \eqref{eq:errest}.

\end{proof}

 \begin{remark}[Existence and uniqueness]
Note that the existence and uniqueness result of $u$, solution to the numerical scheme \eqref{eq:fvscheme} which was proven in \cite[Theorem 2.1]{Droniou_Gallouet_2002}, may also be seen as a consequence of \eqref{eq:errest}. 
Indeed, the components of $u$ are solution to the square linear system given by the numerical scheme. 
A null right hand side to this linear system is obtained by setting $\bF = 0$ and $f =0$, which leads to $\overline{u} = 0$. 
The error estimate result yields a $0$ value in the right hand side of \eqref{eq:errest} and therefore $\delta_{\mesh}(0,u)=0$, implying $u = 0$. 
Hence the linear system is invertible so that there exists a unique solution to \eqref{eq:fvscheme}. 
 \end{remark}

\begin{lem}[Error bound for the consistent approximate gradient]\label{lem:cvgradcons}
 Let $\overline{u}\in H^1_0(\Omega)$ and let $u\in  \Xmesh$, then the following bound holds:
 \[
  \Vert \widehat\nabla_{\mesh} u - \nabla \overline{u}\Vert_{L^2(\Omega)^d} \le \frac {d}{\theta_{\mesh}} (\Vert G_{\mesh} u - \gnc_{\mesh} \overline{u}\Vert_{L^2(\Omega)} + \Theta_{\mesh}(\nabla\overline{u})),
 \]
 with, for any ${\bm\varphi}\in L^2(\Omega)^d$,
 \begin{equation}\label{eq:deftphi}
  \Theta_{\mesh}({\bm\varphi})^2 = \sum_{K\in\mesh}  \int_K  \int_K \frac 1 {|K|}|{\bm\varphi}(y) - {\bm\varphi}(x)|^2{\rm d}x {\rm d}y.
 \end{equation}
As a consequence, if $\overline{u}$ is the solution to \eqref{eq:pbellcont} and $u$ is the solution of \eqref{eq:fvscheme}, then 
 \begin{equation}\label{eq:errestgrad}
 \Vert \widehat\nabla_{\mesh} u - \nabla \overline{u}\Vert_{L^2(\Omega)^d} \le \frac {d}{\theta_{\mesh}}\Big( 2(1+ {\rm diam}(\Omega)) \big(\zeta_{\mesh}(\nabla\overline{u}+\bF) + \inf_{v\in \Xmesh}\delta_{\mesh}(\overline{u},v)\big) + \Theta_{\mesh}(\nabla\overline{u})\Big).
\end{equation}
\end{lem}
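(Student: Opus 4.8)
The plan is to prove first the \emph{geometric} inequality, valid for arbitrary $\overline{u}\in H^1_0(\Omega)$ and $u\in\Xmesh$, by working cell by cell, and then to deduce \eqref{eq:errestgrad} by inserting into it the bound on $\Vert G_{\mesh}u-\gnc_{\mesh}\overline{u}\Vert_{L^2(\Omega)}$ furnished by Theorem~\ref{thm:errest}. Since $\widehat\nabla_{\mesh}u$ is constant on each $K$, I would compare it on $K$ with the cell mean ${\bm g}_K:=\frac 1{|K|}\int_K\nabla\overline{u}\,\dx$, splitting $\widehat\nabla_{\mesh}u-\nabla\overline{u}=(\widehat\nabla_{\mesh}u-{\bm g}_K)+({\bm g}_K-\nabla\overline{u})$ and estimating the two $L^2$ contributions separately. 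Two elementary tools drive the argument. The first is the divergence-theorem identity $\sum_{\sigma\in\mathcal{E}_K}|\sigma|(\overline{x}_\sigma-x_K)\otimes{\bm n}_{K,\sigma}=|K|\,\Id$ (obtained by integrating $x\mapsto(\bm c\cdot(x-x_K))$ over $K$ for a constant vector $\bm c$), which expresses that the consistent gradient reproduces every constant vector from its normal face components, i.e. $\frac1{|K|}\sum_{\sigma}|\sigma|(\overline{x}_\sigma-x_K)(\bm c\cdot{\bm n}_{K,\sigma})=\bm c$. The second is the variance identity $\int_K|\nabla\overline{u}-{\bm g}_K|^2\,\dx=\frac1{2|K|}\int_K\int_K|\nabla\overline{u}(y)-\nabla\overline{u}(x)|^2\,{\rm d}x\,{\rm d}y$, which after summation gives exactly $\sum_{K}\int_K|\nabla\overline{u}-{\bm g}_K|^2=\frac12\Theta_{\mesh}(\nabla\overline{u})^2$; this already controls the term ${\bm g}_K-\nabla\overline{u}$ by $\Theta_{\mesh}(\nabla\overline{u})$.

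For the term $\widehat\nabla_{\mesh}u-{\bm g}_K$, I would use the reproduction identity with $\bm c={\bm g}_K$ to subtract ${\bm g}_K$ inside the face sum, which, together with the definition of $\widehat\nabla_{\mesh}$, yields $\widehat\nabla_{\mesh}u|_K-{\bm g}_K=\frac1{|K|}\sum_{\sigma}|\sigma|(\overline{x}_\sigma-x_K)(G_{K,\sigma}-{\bm g}_K\cdot{\bm n}_{K,\sigma})$, where $G_{K,\sigma}=(u_\sigma-u_K)/d_{K,\sigma}$ is the value of $G_{\mesh}u$ on $D_{K,\sigma}$. I then split the scalar factor as $(G_{K,\sigma}-g_{K,\sigma})+(g_{K,\sigma}-{\bm g}_K\cdot{\bm n}_{K,\sigma})$, where $g_{K,\sigma}$ is the (constant) value of $\gnc_{\mesh}\overline{u}$ on $D_{K,\sigma}$: the first part is precisely $G_{\mesh}u-\gnc_{\mesh}\overline{u}$ on $D_{K,\sigma}$, while the second equals $\big(\frac1{|D_{K,\sigma}|}\int_{D_{K,\sigma}}\nabla\overline{u}-{\bm g}_K\big)\cdot{\bm n}_{K,\sigma}$, a local oscillation of $\nabla\overline{u}$ over $K$. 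In both I would bound $|\overline{x}_\sigma-x_K|\le{\rm diam}(K)\le d_{K,\sigma}/\theta_{\mesh}$ and use $|\sigma|\,d_{K,\sigma}=d\,|D_{K,\sigma}|$, so that $|\sigma|\,|\overline{x}_\sigma-x_K|\le d\,|D_{K,\sigma}|/\theta_{\mesh}$ and each face sum collapses to a cell integral, giving $|\widehat\nabla_{\mesh}u|_K-{\bm g}_K|\le\frac{d}{\theta_{\mesh}|K|}\int_K\big(|G_{\mesh}u-\gnc_{\mesh}\overline{u}|+|\nabla\overline{u}-{\bm g}_K|\big)\,\dx$. A cell-wise Cauchy--Schwarz inequality $(\int_K h)^2\le|K|\int_K h^2$ then gives $\sum_K|K|\,|\widehat\nabla_{\mesh}u|_K-{\bm g}_K|^2\le\frac{2d^2}{\theta_{\mesh}^2}\big(\Vert G_{\mesh}u-\gnc_{\mesh}\overline{u}\Vert_{L^2(\Omega)}^2+\sum_K\int_K|\nabla\overline{u}-{\bm g}_K|^2\big)$, and the variance identity turns the last summand into $\tfrac12\Theta_{\mesh}(\nabla\overline{u})^2$. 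Collecting the three contributions and absorbing the harmless numerical factors into $d/\theta_{\mesh}$ (using $\theta_{\mesh}\le1$) produces the first inequality of the lemma.

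For the consequence \eqref{eq:errestgrad}, I would take $u$ to be the solution of \eqref{eq:fvscheme} and bound $\Vert G_{\mesh}u-\gnc_{\mesh}\overline{u}\Vert_{L^2(\Omega)}$ through Theorem~\ref{thm:errest}: by the definition \eqref{eq:definter} of $\delta_{\mesh}$ one has $\sqrt d\,\Vert G_{\mesh}u-\gnc_{\mesh}\overline{u}\Vert_{L^2(\Omega)}\le\delta_{\mesh}(\overline{u},u)$, and the right inequality of \eqref{eq:errest} bounds this by $3(\zeta_{\mesh}(\nabla\overline{u}+\bF)+\mathcal I_{\mesh}(\overline{u}))$; alternatively inequality \eqref{inter} in the proof of Theorem~\ref{thm:errest} gives this term directly with the stated constant $2(1+{\rm diam}(\Omega))$. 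Substituting into the first inequality yields \eqref{eq:errestgrad}. I expect the only genuinely delicate point to be the treatment of the oscillation term $g_{K,\sigma}-{\bm g}_K\cdot{\bm n}_{K,\sigma}$: one must recognize that replacing the $D_{K,\sigma}$-average of the normal gradient by the full-cell mean ${\bm g}_K$ costs only a local oscillation of $\nabla\overline{u}$ over $K$, so that it is controlled by $\Theta_{\mesh}(\nabla\overline{u})$ and requires no regularity beyond $\overline{u}\in H^1_0(\Omega)$; everything else is geometric bookkeeping to extract the sharp constant $d/\theta_{\mesh}$ from $|\sigma|\,d_{K,\sigma}=d\,|D_{K,\sigma}|$ and $|\overline{x}_\sigma-x_K|\le d_{K,\sigma}/\theta_{\mesh}$.
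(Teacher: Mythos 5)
Your strategy is essentially the paper's: a triangle inequality around a piecewise-constant intermediate field, the reproduction identity $\sum_{\sigma\in\mathcal{E}_K}|\sigma|(\overline{x}_\sigma-x_K){\bm n}_{K,\sigma}^t=|K|\,{\rm Id}$, the geometric bounds $|\overline{x}_\sigma-x_K|\le d_{K,\sigma}/\theta_{\mesh}$ and $|\sigma|\,d_{K,\sigma}=d\,|D_{K,\sigma}|$, control of the remaining oscillation by $\Theta_{\mesh}(\nabla\overline{u})$, and the deduction of \eqref{eq:errestgrad} from Theorem~\ref{thm:errest} (your route via \eqref{inter} is exactly the intended one). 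The one substantive difference is the choice of intermediate quantity. The paper takes $\overline\nabla_K\overline{u}=\frac1{|K|}\sum_{\sigma}|\sigma|(\overline{x}_\sigma-x_K)\,{\bm n}_{K,\sigma}\cdot\overline\nabla_{K,\sigma}\overline{u}$, i.e.\ it applies the \emph{same} reconstruction formula to the cone averages $\overline\nabla_{K,\sigma}\overline{u}=\frac1{|D_{K,\sigma}|}\int_{D_{K,\sigma}}\nabla\overline{u}$ as the one defining $\widehat\nabla_{\mesh}u$ from $G_{\mesh}u$. With that choice the two triangle-inequality pieces decouple cleanly: one is bounded by $\frac{d}{\theta_{\mesh}}\Vert G_{\mesh}u-\gnc_{\mesh}\overline{u}\Vert_{L^2}$ and the other by $\frac{d}{\theta_{\mesh}}\Theta_{\mesh}(\nabla\overline{u})$, giving the stated constant exactly. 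Your choice of the full-cell mean ${\bm g}_K$ forces the extra split $(G_{K,\sigma}-g_{K,\sigma})+(g_{K,\sigma}-{\bm g}_K\cdot{\bm n}_{K,\sigma})$ inside the face sum, and the Cauchy--Schwarz step $(a+b)^2\le 2(a^2+b^2)$ then leaves you with $\frac{\sqrt2\,d}{\theta_{\mesh}}$ in front of $\Vert G_{\mesh}u-\gnc_{\mesh}\overline{u}\Vert_{L^2}$. That factor $\sqrt2$ \emph{cannot} be absorbed ``using $\theta_{\mesh}\le1$'': $\theta_{\mesh}\le1$ only makes $d/\theta_{\mesh}$ larger, it does not let you replace $\sqrt2\,d/\theta_{\mesh}$ by $d/\theta_{\mesh}$. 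So as written you prove the lemma with a slightly larger constant (which is harmless for \eqref{eq:errestgrad} and all the convergence consequences, but does not literally match the stated bound). To recover the exact constant, replace ${\bm g}_K$ by $\overline\nabla_K\overline{u}$; your variance identity for $\Theta_{\mesh}$ is then no longer needed, since the oscillation term is handled directly by Cauchy--Schwarz on the cones $D_{K,\sigma}$. (As a side remark, both your computation and the paper's proof use the reconstruction \eqref{eq:defgradconsist} \emph{without} the prefactor $d$; with that prefactor the formula would not reproduce constant gradients, so this appears to be a typo in the displayed definition rather than an error on your part.)
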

\begin{proof}
 We define $\overline{\nabla}_{K,\sigma}\overline{u} =\frac 1 {|D_{K,\sigma}|}\int_{D_{K,\sigma}} \nabla\overline{u}(x){\rm d}x $. 
 Note that $\gnc_{\mesh} \overline{u}$ is a.e. equal to ${\bm n}_{K,\sigma}\cdot\overline{\nabla}_{K,\sigma}\overline{u}$ in $D_{K,\sigma}$.
Owing to  \cite[Lemme 2.4]{vf100},  
 \[
  \frac 1 {|K|}\sum_{\sigma\in\mathcal{E}_K} |\sigma|(\overline{x}_\sigma - x_K) {\bm n}_{K,\sigma}^t = {\rm Id}_d,
 \]
 where ${\rm Id}_d$ is the $d\times d$ identity matrix.
 This is true for any choice of $x_K$, however in the sequel we use it for the choice of $x_K$ satisfying the definition \ref{def:meshdirichlet} of admissible meshes.
Let $\overline\nabla_{\mesh} \overline{u}$ be defined a.e. on $\Omega$ by:
 \[
 \forall x \in K, \; \overline\nabla_{\mesh} \overline{u}(x) \overline\nabla_K \overline{u} = \frac 1 {|K|} \sum_{\sigma\in\mathcal{E}_K} |\sigma|(\overline{x}_\sigma - x_K) {\bm n}_{K,\sigma}\cdot\overline{\nabla}_{K,\sigma}\overline{u},
 \]
By definition  of the mesh regularity parameter $\theta_mesh$, we have  $|\overline{x}_\sigma - x_K| \le \frac{d_{K,\sigma}}{\theta_{\mesh}}$,
 \begin{align*}
   \Vert\overline\nabla_{\mesh} \overline{u} - \nabla \overline{u}\Vert_{L^2(\Omega)^d}^2 &= \sum_{K\in\mesh} \int_K\Big(\frac 1 {|K|} \sum_{\sigma\in\mathcal{E}_K} |\sigma|(\overline{x}_\sigma - x_K) {\bm n}_{K,\sigma}\cdot(\overline{\nabla}_{K,\sigma}\overline{u}  -\nabla\overline{u}(x)\Big)^2 {\rm d}x \\
   &\le  \frac {d^2}{\theta_{\mesh}^2}\sum_{K\in\mesh} \int_K\Big(\frac 1 {|K|} \sum_{\sigma\in\mathcal{E}_K}  |D_{K,\sigma}| |\overline{\nabla}_{K,\sigma}\overline{u}  -\nabla\overline{u}(x)|\Big)^2 {\rm d}x\\
   &\le \frac {d^2}{\theta_{\mesh}^2}\sum_{K\in\mesh} \int_K \frac 1 {|K|} \sum_{\sigma\in\mathcal{E}_K} |D_{K,\sigma}| \Big|\overline{\nabla}_{K,\sigma}\overline{u}  -\nabla\overline{u}(x)\Big|^2{\rm d}x.
 \end{align*}
On the one hand, by the Cauchy-Schwarz inequality,
 \[
   \Big|\overline{\nabla}_{K,\sigma}\overline{u}  -\nabla\overline{u}(x)\Big|^2   =
   \Big| \frac 1 {|D_{K,\sigma}|}\int_{D_{K,\sigma}} (\nabla\overline{u}(y) - \nabla\overline{u}(x)){\rm d}y\Big|^2   \le
   \frac 1 {|D_{K,\sigma}|} \int_{D_{K,\sigma}} |\nabla\overline{u}(y) - \nabla\overline{u}(x)|^2{\rm d}y,
\]
so that
\[
   \Vert\overline\nabla_{\mesh} \overline{u} - \nabla \overline{u}\Vert_{L^2(\Omega)^d}^2  
   \le \frac {d^2}{\theta_{\mesh}^2}\sum_{K\in\mesh} \sum_{\sigma\in\mathcal{E}_K} \int_{D_{K,\sigma}}  \int_K \frac 1 {|K|}|\nabla\overline{u}(y) - \nabla\overline{u}(x)|^2{\rm d}x {\rm d}y    
   =\frac {d^2}{\theta_{\mesh}^2}\Theta_{\mesh}(\nabla\overline{u})^2.
\]
On the other hand,  
 \begin{align*}
  \Vert \widehat\nabla_{\mesh} u - \overline\nabla_{\mesh} \overline{u}\Vert_{L^2(\Omega)^d}^2 & = \sum_{K\in\mesh}  \frac 1 {|K|} \Big|\sum_{\sigma\in\mathcal{E}_K} |\sigma|(\overline{x}_\sigma - x_K) (\frac {u_\sigma - u_K} {d_{K,\sigma}} - {\bm n}_{K,\sigma}\cdot\overline{\nabla}_{K,\sigma}\overline{u})\Big|^2\\ 
  & \le \sum_{K\in\mesh}  \frac 1 {|K|} \Big(\sum_{\sigma\in\mathcal{E}_K} |\sigma| |\overline{x}_\sigma - x_K| \big|\frac {u_\sigma - u_K} {d_{K,\sigma}} - {\bm n}_{K,\sigma}\cdot\overline{\nabla}_{K,\sigma}\overline{u})\big|\Big)^2
  \\ 
  &\le  \frac {d^2}{\theta_{\mesh}^2}
  \sum_{K\in\mesh}  \frac 1 {|K|} \Big(\sum_{\sigma\in\mathcal{E}_K}  |D_{K,\sigma}|  \big|\frac {u_\sigma - u_K} {d_{K,\sigma}} - {\bm n}_{K,\sigma}\cdot\overline{\nabla}_{K,\sigma}\overline{u})\big|\Big)^2
 \end{align*}
 Applying the Cauchy-Schwarz inequality, we obtain
 \begin{align*}
  \Vert \widehat\nabla_{\mesh} u - \overline\nabla_{\mesh} \overline{u}\Vert_{L^2(\Omega)^d}^2 &\le  \frac {d^2}{\theta_{\mesh}^2}\sum_{K\in\mesh}  \sum_{\sigma\in\mathcal{E}_K} |D_{K,\sigma}| (\frac {u_\sigma - u_K} {d_{K,\sigma}} - {\bm n}_{K,\sigma}\cdot\overline{\nabla}_{K,\sigma}\overline{u})^2\\ &\le  \frac {d^2}{\theta_{\mesh}^2}\sum_{K\in\mesh}  \sum_{\sigma\in\mathcal{E}_K} |D_{K,\sigma}| (\frac {u_\sigma - u_K} {d_{K,\sigma}} - {\bm n}_{K,\sigma}\cdot\overline{\nabla}_{K,\sigma}\overline{u})^2\\
  &=\Big(\frac {d}{\theta_{\mesh}} \Vert G_{\mesh} u - \gnc_{\mesh} \overline{u}\Vert_{L^2(\Omega)}\Big)^2.
 \end{align*}
 \end{proof}

 \subsection{Convergence of the interpolation and conformity errors}
Owing to Theorem \ref{thm:errest}, the convergence of the scheme \eqref{eq:cvfvscheme} relies on the convergence of the conformity error  and the interpolation error. 

\begin{lem}[Convergence of the conformity error]\label{lem:cvzeta}
 Let $(\mesh_n)_{n\in\mathbb{N}}$ be a sequence of admissible meshes such that $h_{\mesh_n} = \max_{K\in \mesh_n} {\rm diam}(K)$ tends to 0 as $n\to\infty$.
 For any ${\bm\varphi}\in \hdiv(\Omega)$,
 \[
  \lim_{n\to\infty}\zeta_{\mesh_n}({\bm\varphi}) = 0.
 \]
\end{lem}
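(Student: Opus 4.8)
The plan is to prove the lemma by a stability-and-consistency argument of Banach–Steinhaus / second-Strang type: a mesh-uniform bound on $\zeta_{\mesh}$, consistency on a dense subspace of $\hdiv(\Omega)$, and a density passage. The starting observation is that, for a fixed field, $\zeta_{\mesh}({\bm\varphi})$ is by \eqref{eq:defconfer} the dual norm (with respect to $\Vert\cdot\Vert_{\mesh}$) of the linear form $v\mapsto\int_\Omega(\div{\bm\varphi}\,v+{\bm\varphi}\cdot\nabla_{\mesh}v)\dx$ on $\Xmesh$; in particular ${\bm\varphi}\mapsto\zeta_{\mesh}({\bm\varphi})$ is a seminorm, hence subadditive and positively homogeneous, and this holds for every admissible mesh.

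\textbf{Step 1 (mesh-uniform stability).} I would show there is a constant $C$ depending only on $\Omega$ and $d$ such that $\zeta_{\mesh}({\bm\varphi})\le C\Vert{\bm\varphi}\Vert_{\hdiv}$ for every admissible mesh $\mesh$. For the divergence term, Cauchy–Schwarz and the discrete Poincaré inequality \eqref{eq:poindis} give $|\int_\Omega\div{\bm\varphi}\,v\dx|\le\Vert\div{\bm\varphi}\Vert_{L^2(\Omega)}\Vert v\Vert_{L^2(\Omega)}\le{\rm diam}(\Omega)\Vert\div{\bm\varphi}\Vert_{L^2(\Omega)}\Vert v\Vert_{\mesh}$. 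For the gradient term, using \eqref{eq:defnablam} and $|D_{K,\sigma}|=|\sigma|d_{K,\sigma}/d$, a direct computation gives $\int_\Omega{\bm\varphi}\cdot\nabla_{\mesh}v\dx=\sum_{K\in\mesh}\sum_{\sigma\in\edges_K}|\sigma|(v_\sigma-v_K)\,{\bm n}_{K,\sigma}\cdot\overline{\bm\varphi}_{K,\sigma}$, where $\overline{\bm\varphi}_{K,\sigma}$ is the mean of ${\bm\varphi}$ over $D_{K,\sigma}$. Writing each summand as $(\sqrt{|\sigma|/d_{K,\sigma}}\,(v_\sigma-v_K))\cdot(\sqrt{|\sigma|d_{K,\sigma}}\,{\bm n}_{K,\sigma}\cdot\overline{\bm\varphi}_{K,\sigma})$, then applying Cauchy–Schwarz, Jensen's inequality $|\overline{\bm\varphi}_{K,\sigma}|^2\le|D_{K,\sigma}|^{-1}\int_{D_{K,\sigma}}|{\bm\varphi}|^2\dx$, and the fact that the cones $D_{K,\sigma}$ tile $\Omega$, bounds this term by $\sqrt d\,\Vert{\bm\varphi}\Vert_{L^2(\Omega)^d}\Vert v\Vert_{\mesh}$. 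Taking the supremum over $\Vert v\Vert_{\mesh}=1$ yields the claim.

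\textbf{Step 2 (consistency on smooth fields).} For ${\bm\psi}\in C^1(\overline\Omega)^d$ I would show $\zeta_{\mesh_n}({\bm\psi})\to0$. Applying the divergence theorem on each cell and regrouping the face contributions via ${\bm n}_{K,\sigma}=-{\bm n}_{L,\sigma}$ across an interior face and $v_\sigma=0$ on exterior faces (the same bookkeeping used to derive \eqref{eq:poindis}), the integration-by-parts defect telescopes to $\int_\Omega(\div{\bm\psi}\,v+{\bm\psi}\cdot\nabla_{\mesh}v)\dx=\sum_{K\in\mesh_n}\sum_{\sigma\in\edges_K}|\sigma|(v_\sigma-v_K)R_{K,\sigma}$, where $R_{K,\sigma}$ is the difference between the mean of ${\bm n}_{K,\sigma}\cdot{\bm\psi}$ over the cone $D_{K,\sigma}$ and its mean over the face $\sigma$. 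Since $D_{K,\sigma}$ and $\sigma$ both lie in the convex set $\overline K$ of diameter $\le h_{\mesh_n}$, one has $|R_{K,\sigma}|\le L_{\bm\psi}h_{\mesh_n}$ with $L_{\bm\psi}$ a Lipschitz constant of ${\bm\psi}$. The same Cauchy–Schwarz splitting as in Step 1, together with $\sum_{K,\sigma}|\sigma|d_{K,\sigma}=d|\Omega|$, then gives $\zeta_{\mesh_n}({\bm\psi})\le L_{\bm\psi}\sqrt{d|\Omega|}\,h_{\mesh_n}\to0$.

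\textbf{Step 3 (density), and the main obstacle.} Since $\Omega$ is polytopal, hence Lipschitz, $C^1(\overline\Omega)^d$ is dense in $\hdiv(\Omega)$. Given ${\bm\varphi}\in\hdiv(\Omega)$ and $\eps>0$, choose ${\bm\psi}\in C^1(\overline\Omega)^d$ with $\Vert{\bm\varphi}-{\bm\psi}\Vert_{\hdiv}\le\eps$; subadditivity and Steps 1–2 give $\zeta_{\mesh_n}({\bm\varphi})\le\zeta_{\mesh_n}({\bm\psi})+\zeta_{\mesh_n}({\bm\varphi}-{\bm\psi})\le\zeta_{\mesh_n}({\bm\psi})+C\eps$, so $\limsup_n\zeta_{\mesh_n}({\bm\varphi})\le C\eps$, and $\eps\to0$ concludes. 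I expect the main subtlety to be twofold: keeping the stability constant in Step 1 independent of the mesh (which is exactly what $|D_{K,\sigma}|=|\sigma|d_{K,\sigma}/d$ and the tiling of $\Omega$ by the cones provide); and the fact that one cannot reduce to compactly supported approximants, since $C_c^\infty(\Omega)^d$ is \emph{not} dense in $\hdiv(\Omega)$. Consequently Step 2 must handle fields with nonzero normal trace on $\partial\Omega$, which is precisely why the exterior-face bookkeeping $v_\sigma=0$ is essential and must be carried out with care.
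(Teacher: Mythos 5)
Your proposal is correct and follows essentially the same route as the paper: an $O(h_{\mesh})$ consistency estimate for $C^1(\overline\Omega)^d$ fields obtained from the cone-mean versus face-mean defect after telescoping over faces, combined with a mesh-uniform stability bound (Cauchy--Schwarz plus the discrete Poincar\'e inequality \eqref{eq:poindis}) and the density of $C^1(\overline\Omega)^d$ in $\hdiv(\Omega)$ \`a la Temam. The only cosmetic difference is that you package the density step through subadditivity of the seminorm $\zeta_{\mesh}$ and a standalone stability constant, whereas the paper applies the same bound directly to the remainder ${\bm\varphi}-\bm\psi$.
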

\begin{proof}
We first consider the case where ${\bm\varphi}\in C^1(\overline{\Omega})^d$. 
Let us momentarily drop the index $n$ for simplicity, and let $v\in \Xmesh$ with $\Vert v\Vert_{\mesh} = 1$.
Owing to the definition \eqref{eqdef:conformity} of $\zeta_{\mesh}$, 
\[
\zeta_{\mesh}({\bm\varphi})\!\! = \!\!\!\!\sup_{\substack{v\in \Xmesh\\\Vert v\Vert_{\mesh} = 1}}  \!\!\!\!z_{\mesh}({\bm\varphi},v) 
\mbox{ where  } \, z_{\mesh}({\bm\varphi},v)  =\!\!\int_\Omega\!\! ({\rm div}{\bm\varphi}(x)v(x)  +{\bm\varphi}(x)\cdot \nabla_{\mesh}v(x)) {\rm d}x.  
\]
 Let
$
 {\bm\varphi}_{K,\sigma} =  \frac 1 {|D_{K,\sigma}|}\int_{D_{K,\sigma}}{\bm\varphi}(x) {\rm d}x
\mbox{ and }   {\bm\varphi}_{\sigma} =  \frac 1 {|\sigma|}\int_{\sigma}{\bm\varphi}(x) \dfrontiere ;
$
by the definition \ref{eq:defnablam} of $\nabla_{\mesh}$, we have 
\[
 z_{\mesh}({\bm\varphi},v)   = \sum_{K\in\mesh}v_K \sum_{\sigma\in\mathcal{E}_K}|\sigma|\varphi_\sigma\cdot {\bm n}_{K,\sigma} + \sum_{K\in\mesh} \sum_{\sigma\in\mathcal{E}_K} |D_{K,\sigma}|   (v_\sigma-v_K)  {\bm\varphi}_{K,\sigma}\cdot {\bm n}_{K,\sigma}.
\]
 Observing that $|D_{K,\sigma}|  = \dfrac {|\sigma| d_{K,\sigma}} d $ and that $\displaystyle\sum_{K\in\mesh} \sum_{\sigma\in\mathcal{E}_K}|\sigma| v_\sigma  {\bm\varphi}_{K,\sigma}\cdot {\bm n}_{K,\sigma} =0$, we get that 
 \begin{align*}
  z_{\mesh}({\bm\varphi},v) & =     \sum_{K\in\mesh}\sum_{\sigma\in\mathcal{E}_K}|\sigma|(v_\sigma-v_K)\big( {\bm\varphi}_{K,\sigma}\cdot {\bm n}_{K,\sigma} -  {\bm\varphi}_{\sigma}\cdot {\bm n}_{K,\sigma} \big)\\
  & \le  \Vert v\Vert_{\mesh}\big( \sum_{K\in\mesh}\sum_{\sigma\in\mathcal{E}_K}|\sigma| d_{K,\sigma}|{\bm\varphi}_{K,\sigma} -  {\bm\varphi}_{\sigma}|^2\big)^{1/2},
 \end{align*}
owing to the Cauchy-Schwarz inequality.
Hence
\[
 \zeta_{\mesh}({\bm\varphi}) \le C_{\bm\varphi} h_{\mesh} (d\ |\Omega|)^{1/2}.
\]
We now consider the general case   ${\bm\varphi}\in \hdiv(\Omega)$. Let $\bm\psi\in C^1(\overline{\Omega})^d$ be given.
Again dropping the subscript $n$, we have for $v\in \Xmesh$ with $\Vert v\Vert_{\mesh} = 1$
 \begin{align*}
  \int_\Omega ({\rm div}{\bm\varphi}(x)v(x) +{\bm\varphi}(x)\cdot \nabla_{\mesh}v(x)) {\rm d}x &= \int_\Omega ({\rm div}\bm\psi(x)v(x) +\bm\psi(x)\cdot \nabla_{\mesh}v(x)) {\rm d}x + \delta \\
  & \le  C_{\bm\psi} h_{\mesh} (d\ |\Omega|)^{1/2} + \delta,
\end{align*}
where thanks to  \eqref{eq:poindis} and to the Cauchy-Schwarz inequality,
 \begin{multline*}
  |\delta| \le \Vert {\bm\varphi} - \bm\psi\Vert_{L^2(\Omega)^d}\Vert  \nabla_{\mesh}v\Vert_{L^2(\Omega)^d} + \Vert {\rm div}({\bm\varphi} - \bm\psi)\Vert_{L^2(\Omega)}\Vert  v\Vert_{L^2(\Omega)} \\
  \le (d+{\rm diam}(\Omega)^2)^{1/2}\Vert {\bm\varphi} - \bm\psi\Vert_{\hdiv(\Omega)}.
\end{multline*}
Hence
\[
 \zeta_{\mesh}({\bm\varphi}) \le C_{\bm\psi} h_{\mesh} (d\ |\Omega|)^{1/2} +  (d+{\rm diam}(\Omega)^2)^{1/2}\Vert {\bm\varphi} - \bm\psi\Vert_{\hdiv(\Omega)}.
\]
We conclude thanks to \cite[Theorem 1.1]{temam1984theory} which states the density of $C^1(\overline{\Omega})^d$ in $\hdiv(\Omega)$ for any Lipschitz open set $\Omega$ of $\mathbb{R}^d$.
\end{proof}

\begin{lem}[Convergence of the interpolation error]\label{lem:cvdelta}
Let $(\mesh_n)_{n\in\mathbb{N}}$ be a sequence of admissible meshes such that $h_{\mesh_n} = \max_{K\in \mesh_n} {\rm diam}(K)$ tends to 0 as $n\to\infty$. 
Then, for any $\varphi\in H_0^1(\Omega)$,
 \[
  \lim_{n\to\infty} \mathcal{I}_{\mesh_n}(\varphi).
 \]

\end{lem}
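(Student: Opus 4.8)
The plan is to combine a uniform (mesh-independent) stability estimate for $\mathcal I_\mesh$ with respect to the $H^1_0$-norm with an explicit interpolation of smooth functions; the point worth stressing is that, unlike Lemma \ref{lem:cvgradcons}, no control of the regularity factor $\theta_\mesh$ is needed, only $h_{\mesh_n}\to0$. First I would record that $\gnc_\mesh$ is a contraction from $H^1_0(\Omega)$ to $L^2(\Omega)$: on each cone $D_{K,\sigma}$ the constant value $\gnc_\mesh\chi$ is the average of $\nabla\chi\cdot{\bm n}_{K,\sigma}$, so Jensen's inequality gives $\int_{D_{K,\sigma}}|\gnc_\mesh\chi|^2\le\int_{D_{K,\sigma}}|\nabla\chi|^2$, and summing over the cones $(D_{K,\sigma})_{\sigma\in\edges_K}$, which partition $K$, and then over $K$ yields $\Vert\gnc_\mesh\chi\Vert_{L^2(\Omega)}\le\Vert\nabla\chi\Vert_{L^2(\Omega)^d}$. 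Using the triangle inequality in the definition \eqref{eq:definter} of $\delta_\mesh$, the linearity of $\gnc_\mesh$, the infimum over $v\in\Xmesh$, and the continuous Poincaré inequality on $H^1_0(\Omega)$, this produces, for all $\varphi,\psi\in H^1_0(\Omega)$ and every admissible mesh, the estimate $\mathcal I_\mesh(\varphi)\le\mathcal I_\mesh(\psi)+(1+\sqrt d)\Vert\nabla(\varphi-\psi)\Vert_{L^2(\Omega)^d}$.

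Second, I would show $\mathcal I_{\mesh_n}(\psi)\to0$ for $\psi\in C^\infty_c(\Omega)$ by building an explicit $v\in\Xmesh$. Set $v_K=\psi(x_K)$ and, for an interior face $\sigma=K|L$, set $v_\sigma=\psi(z_\sigma)$, where $z_\sigma$ is the unique (hence shared) intersection of the orthogonal line $\mathcal D_{K,L}$ with the hyperplane carrying $\sigma$; for $h_{\mesh_n}$ small enough the support of $\psi$ meets no cell touching $\partial\Omega$, so the requirement $v_\sigma=0$ for $\sigma\in\edgesext$ is consistent and $v\in\Xmesh$. Writing $\omega(g,\cdot)$ for the modulus of continuity of a uniformly continuous $g$, the $L^2$ term satisfies $\Vert\psi-v\Vert_{L^2(\Omega)}^2\le|\Omega|\,\omega(\psi,h_{\mesh_n})^2$. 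For the gradient term the key identity is $z_\sigma-x_K=d_{K,\sigma}{\bm n}_{K,\sigma}$, so that $G_\mesh v=\frac{\psi(z_\sigma)-\psi(x_K)}{d_{K,\sigma}}$ is exactly the average of $\nabla\psi\cdot{\bm n}_{K,\sigma}$ along the segment $[x_K,z_\sigma]$, whereas $\gnc_\mesh\psi$ is the average of the same function over the cone $D_{K,\sigma}$; since both sets lie in the ball $B(x_K,h_{\mesh_n})$, one gets $|\gnc_\mesh\psi-G_\mesh v|\le 2\,\omega(\nabla\psi,h_{\mesh_n})$ a.e. on $D_{K,\sigma}$, hence $\Vert\gnc_\mesh\psi-G_\mesh v\Vert_{L^2(\Omega)}^2\le 4|\Omega|\,\omega(\nabla\psi,h_{\mesh_n})^2$. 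Both terms vanish as $n\to\infty$ by uniform continuity of $\psi$ and $\nabla\psi$.

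Finally, for general $\varphi\in H^1_0(\Omega)$ and $\eps>0$, I would choose $\psi\in C^\infty_c(\Omega)$ with $\Vert\nabla(\varphi-\psi)\Vert_{L^2(\Omega)^d}<\eps$ (density of $C^\infty_c(\Omega)$ in $H^1_0(\Omega)$); the stability estimate and the smooth case then give $\limsup_{n\to\infty}\mathcal I_{\mesh_n}(\varphi)\le(1+\sqrt d)\eps$, and letting $\eps\to0$ concludes that $\lim_{n\to\infty}\mathcal I_{\mesh_n}(\varphi)=0$.

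The main obstacle is the gradient term, because the single face value $v_\sigma$ must make the two one-sided quotients $(v_\sigma-v_K)/d_{K,\sigma}$ and $(v_\sigma-v_L)/d_{L,\sigma}$ simultaneously consistent with the respective mean normal gradients; this is resolved precisely by placing $v_\sigma$ at the foot $z_\sigma$ of the common orthogonal line $\mathcal D_{K,L}$, which turns each discrete quotient into a genuine directional average of $\nabla\psi\cdot{\bm n}_{K,\sigma}$ and thus makes it comparable to $\gnc_\mesh\psi$ uniformly and without any shape-regularity hypothesis. One should also dispose of the minor technical point that $z_\sigma$ may fall outside the face $\sigma$, which is harmless here since $\psi$ extends smoothly to all of $\R^d$ and $|z_\sigma-x_K|=d_{K,\sigma}\le h_{\mesh_n}$.
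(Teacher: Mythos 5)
Your proof is correct and follows essentially the same two-step strategy as the paper: an explicit interpolant with $v_K=\psi(x_K)$ for smooth $\psi$, giving an $O(h_{\mesh_n})$ bound on $|\gnc_{\mesh_n}\psi-G_{\mesh_n}v|$ on each cone, followed by a density argument resting on the pointwise contraction property $|\gnc_{\mesh_n}(\varphi-\psi)|\le|\nabla(\varphi-\psi)|$. The only (harmless) difference is the face value: you take $v_\sigma=\psi(z_\sigma)$ at the foot of the orthogonal line, whereas the paper defines $v_\sigma$ by the flux-conservation relation, which makes the discrete quotient equal to $(\psi(x_L)-\psi(x_K))/(d_{K,\sigma}+d_{L,\sigma})$ — both are averages of $\nabla\psi\cdot{\bm n}_{K,\sigma}$ over a segment of length at most $h_{\mesh_n}$ and yield the same estimate without any shape-regularity assumption.
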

\begin{proof}
We first consider $\varphi\in C^\infty_c(\Omega)$. 
Let  $v^{(n)}\in H_{\mesh_n}$ be defined by $v^{(n)}_K = \varphi(x_K)$ and $v^{(n)}_\sigma$ be such that 
\[
  |\sigma|\frac {v^{(n)}_\sigma - v^{(n)}_K} {d_{K,\sigma}} + |\sigma|\frac {v^{(n)}_\sigma - v^{(n)}_L} {d_{L,\sigma}} = 0,\hbox{ for all } \sigma\in \mathcal{E}_K\cap \mathcal{E}_L,
\] 
By the definition \eqref{eq:definter},
\[
 \delta_{\mesh_n}(\varphi,v^{(n)}) = \frac {1} {{\rm diam}(\Omega)} \Vert  \varphi- v^{(n)}\Vert_{L^2(\Omega)} + \sqrt{d}\Vert \gnc_{\mesh_n}\varphi - G_{\mesh_n}v^{(n)}\Vert_{L^2(\Omega)}.
 \]
 It is clear that $\Vert  \varphi- v^{(n)}\Vert_{L^2(\Omega)}\to 0 $ as $\nti$.
 Now observe that
 \[
\frac {v^{(n)}_\sigma - v^{(n)}_K} {d_{K,\sigma}} = \frac{v^{(n)}_L-v^{(n)}_K} {d_{K,\sigma}+d_{L,\sigma}};
 \]
therefore, since $\varphi\in C^\infty_c(\Omega)$, we have for any $x\in D_{K,\sigma}$,
\[
 |\gnc_{\mesh_n}\varphi(x) - G_{\mesh_n}v^{(n)}(x)|\le  C_{\varphi} h_{\mesh_n},
 \]
and the result follows. 

\smallskip

We now consider the case $\varphi\in H_0^1(\Omega)$; let  $\psi\in C^\infty_c(\Omega)$ be given. 
By the triangle inequality, 
\[
 \delta_{\mesh_n}(\varphi,v)\le  \delta_{\mesh_n}(\psi,v) + \frac{1}{\mathrm{diam}(\Omega)}\Vert\varphi -\psi\Vert_{L^2} +\sqrt{d}\Vert\gnc_{\mesh_n}\varphi -\gnc_{\mesh_n}\psi\Vert_{L^2}.
\]
Observing that for a.e. $x\in \Omega$,
\[
 |\gnc_{\mesh_n}(\varphi- \psi)(x)|\le | \nabla(\varphi- \psi)(x)|,
\]
we get 
\[
 \delta_{\mesh_n}(\varphi,v)\le  \delta_{\mesh_n}(\psi,v) + \max(\frac{1}{\mathrm{diam}(\Omega)},\sqrt{d})\Vert\varphi -\psi\Vert_{H^1},
\]
and the result follows by density. 
\end{proof}

\begin{lem}\label{lem:cvtphi}
Let ${\bm\varphi}\in L^2(\Omega)^d$, and let  $\Theta_{\mesh_n}({\bm\varphi})$ be defined by \eqref{eq:deftphi} for any $n\in\mathbb{N}$.
Then
 \begin{equation}\label{eq:limtphi}
  \lim_{n\to\infty} \Theta_{\mesh_n}({\bm\varphi}) = 0.
 \end{equation}

\end{lem}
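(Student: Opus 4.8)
The plan is to recognise $\Theta_{\mesh}$ as (a multiple of) the $L^2$-projection error onto the piecewise-constant functions on the mesh, and then to argue by density. Writing ${\bm\varphi}_K = \frac{1}{|K|}\int_K {\bm\varphi}(x)\,{\rm d}x$ for the mean value of ${\bm\varphi}$ on $K$ and ${\bm\varphi}_{\mesh}$ for the piecewise-constant function equal to ${\bm\varphi}_K$ on each $K$, I would first expand the square inside \eqref{eq:deftphi}. Using $\int_K\int_K {\bm\varphi}(y)\cdot{\bm\varphi}(x)\,{\rm d}x\,{\rm d}y = |K|^2|{\bm\varphi}_K|^2$ together with Fubini, this yields the identity
\[
\Theta_{\mesh}({\bm\varphi})^2 = 2\Big(\Vert{\bm\varphi}\Vert_{L^2(\Omega)^d}^2 - \sum_{K\in\mesh}|K|\,|{\bm\varphi}_K|^2\Big) = 2\,\Vert{\bm\varphi} - {\bm\varphi}_{\mesh}\Vert_{L^2(\Omega)^d}^2 .
\]
Two facts follow immediately and form the backbone of the argument. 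First, since ${\bm\varphi}\mapsto{\bm\varphi}_{\mesh}$ is linear and orthogonal projections do not increase the $L^2$ norm, $\Theta_{\mesh}$ satisfies the triangle inequality $\Theta_{\mesh}({\bm\varphi}+{\bm\psi})\le\Theta_{\mesh}({\bm\varphi})+\Theta_{\mesh}({\bm\psi})$; second, it obeys the uniform bound $\Theta_{\mesh}({\bm\varphi})\le\sqrt{2}\,\Vert{\bm\varphi}\Vert_{L^2(\Omega)^d}$, with a constant independent of the mesh.

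Next I would establish \eqref{eq:limtphi} for a continuous test field ${\bm\psi}\in C^0(\overline{\Omega})^d$. Since $\overline{\Omega}$ is compact, ${\bm\psi}$ is uniformly continuous, with some modulus of continuity $\omega_{\bm\psi}$ satisfying $\omega_{\bm\psi}(t)\to 0$ as $t\to 0$. For $x,y$ in a single cell $K$ we have $|{\bm\psi}(y)-{\bm\psi}(x)|\le\omega_{\bm\psi}(h_{\mesh})$, whence
\[
\Theta_{\mesh}({\bm\psi})^2 = \sum_{K\in\mesh}\frac{1}{|K|}\int_K\int_K|{\bm\psi}(y)-{\bm\psi}(x)|^2\,{\rm d}x\,{\rm d}y \le \sum_{K\in\mesh}|K|\,\omega_{\bm\psi}(h_{\mesh})^2 = |\Omega|\,\omega_{\bm\psi}(h_{\mesh})^2 .
\]
As $h_{\mesh_n}\to 0$ the right-hand side tends to $0$, so $\Theta_{\mesh_n}({\bm\psi})\to 0$.

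Finally I would pass to a general ${\bm\varphi}\in L^2(\Omega)^d$ by a three-$\varepsilon$ argument, using the density of $C^0(\overline{\Omega})^d$ in $L^2(\Omega)^d$. Given $\varepsilon>0$, pick ${\bm\psi}\in C^0(\overline{\Omega})^d$ with $\Vert{\bm\varphi}-{\bm\psi}\Vert_{L^2(\Omega)^d}\le\varepsilon$; the triangle inequality and the uniform bound give
\[
\Theta_{\mesh_n}({\bm\varphi}) \le \Theta_{\mesh_n}({\bm\varphi}-{\bm\psi}) + \Theta_{\mesh_n}({\bm\psi}) \le \sqrt{2}\,\varepsilon + \Theta_{\mesh_n}({\bm\psi}),
\]
so that $\limsup_{n\to\infty}\Theta_{\mesh_n}({\bm\varphi})\le\sqrt{2}\,\varepsilon$ by the previous step, and the conclusion follows by letting $\varepsilon\to 0$. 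I do not anticipate a genuine obstacle here: the whole argument hinges on the mesh-uniform bound $\Theta_{\mesh}\le\sqrt{2}\,\Vert\cdot\Vert_{L^2(\Omega)^d}$, which is exactly what the projection identity supplies and what makes the density reduction legitimate. The only point requiring a little care is that this bound, together with the triangle inequality for $\Theta_{\mesh}$, holds uniformly over the sequence of meshes rather than merely for a fixed one.
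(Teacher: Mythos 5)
Your proof is correct, and its overall skeleton (treat a regular approximant first, then pass to general ${\bm\varphi}\in L^2(\Omega)^d$ by density using a mesh-uniform stability bound) is the same as the paper's. The one genuine difference is how the stability bound is obtained. The paper never identifies $\Theta_{\mesh}$ with a projection error: it inserts the pointwise triangle inequality $|{\bm\varphi}(y)-{\bm\varphi}(x)|\le|{\bm\varphi}(y)-\bm\psi(y)|+|\bm\psi(y)-\bm\psi(x)|+|\bm\psi(x)-{\bm\varphi}(x)|$ into the double integral and uses $(a+b+c)^2\le 3(a^2+b^2+c^2)$ to get $\Theta_{\mesh}({\bm\varphi})^2\le 3\bigl(2\Vert{\bm\varphi}-\bm\psi\Vert_{L^2(\Omega)^d}^2+\Theta_{\mesh}(\bm\psi)^2\bigr)$, working with $\bm\psi\in C^1(\overline{\Omega})^d$ and the bound $\Theta_{\mesh}(\bm\psi)\le h_{\mesh}C_{\bm\psi}\sqrt{|\Omega|}$. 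You instead prove the identity $\Theta_{\mesh}({\bm\varphi})^2=2\Vert{\bm\varphi}-{\bm\varphi}_{\mesh}\Vert_{L^2(\Omega)^d}^2$, where ${\bm\varphi}_{\mesh}$ is the $L^2$-orthogonal projection onto piecewise constants; this is correct (expand the square and use Fubini), and it buys you both a genuine triangle inequality for $\Theta_{\mesh}$ itself and the sharp mesh-uniform bound $\Theta_{\mesh}\le\sqrt{2}\,\Vert\cdot\Vert_{L^2(\Omega)^d}$, with cleaner constants than the factor $3$ route. Your use of $C^0(\overline{\Omega})^d$ with a modulus of continuity in place of the paper's $C^1$ approximants is an inessential variant. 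You correctly flag the only delicate point, namely that the stability constant is independent of the mesh, which is exactly what the projection interpretation delivers.
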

\begin{proof}
We observe that, for a given admissible mesh $\mesh$ and for $\bm\psi\in C^1(\overline{\Omega})^d$, using
\[
 |\bm\psi(y) - \bm\psi(x)|\le h_{\mesh} C_{\bm\psi},
\]
we get
\[
 |\Theta_{\mesh}(\bm\psi)|\le h_{\mesh} C_{\bm\psi}\sqrt{|\Omega|}.
\]
By the triangle inequality
\[
 |{\bm\varphi}(y) - {\bm\varphi}(x)|\le |{\bm\varphi}(y) - \bm\psi(y)|+|\bm\psi(y) - \bm\psi(x)|+|\bm\psi(x) - {\bm\varphi}(x)|,
\]
we obtain
\[
 \Theta_{\mesh}({\bm\varphi})^2 \le 3 (2\Vert {\bm\varphi} - \bm\psi\Vert_{L^2(\Omega)^d}^2+ \Theta_{\mesh}(\bm\psi)^2)
\]
which concludes the proof by density of $C^1(\overline{\Omega})^d$ in $L^2(\Omega)^d$.
\end{proof}
We can then conclude the convergence of the finite volume scheme.
\begin{thm}[Convergence of the approximate solution and gradient to the solution of \eqref{eq:pbellcont}]\label{thm:cv}
 Let $\overline{u}\in H^1_0(\Omega)$ be the solution of \eqref{eq:pbellcont} and let $u_n\in H_{\mesh_n}$ be the solution of \eqref{eq:fvscheme} for $\mesh = \mesh_n$. 
 Then the following holds:
 \begin{equation}\label{eq:cvfvscheme}
  \lim_{n\to\infty} ( \Vert \overline{u}- u_n\Vert_{L^2(\Omega)}^2 + d\Vert \gnc_{\mesh_n}\overline{u} - G_{\mesh_n}u_n\Vert_{L^2(\Omega)}^2) = 0.
\end{equation}
Moreover, if the sequence $(\theta_{\mesh_n})_{n\in\mathbb{N}}$ is bounded by below by $\theta_0>0$, then
 \begin{equation}\label{eq:cvgradfvscheme}
  \lim_{n\to\infty}  \Vert \widehat\nabla_{\mesh_n} u_n - \nabla \overline{u}\Vert_{L^2(\Omega)} = 0.
\end{equation}
\end{thm}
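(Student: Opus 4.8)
The plan is to obtain both limits as direct corollaries of the optimal error bound of Theorem~\ref{thm:errest} and of Lemma~\ref{lem:cvgradcons}, combined with the three convergence results established in Lemmas~\ref{lem:cvzeta}, \ref{lem:cvdelta} and~\ref{lem:cvtphi}; no new estimate is needed, the work consisting in feeding the correct objects into these statements and in checking that the squared quantities appearing in the theorem are controlled by the (non-squared) distance $\delta_{\mesh_n}$.

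For the first limit~\eqref{eq:cvfvscheme}, I would first apply the right-hand inequality of~\eqref{eq:errest} along the sequence $(\mesh_n)$, giving
\[
 \delta_{\mesh_n}(\overline{u},u_n) \le 3\big(\zeta_{\mesh_n}(\nabla\overline{u}+\bF) + \mathcal{I}_{\mesh_n}(\overline{u})\big).
\]
The key observation recorded in~\eqref{dans:hdiv} is that $\nabla\overline{u}+\bF\in\hdiv(\Omega)$, so Lemma~\ref{lem:cvzeta} yields $\zeta_{\mesh_n}(\nabla\overline{u}+\bF)\to0$; since $\overline{u}\in H^1_0(\Omega)$, Lemma~\ref{lem:cvdelta} yields $\mathcal{I}_{\mesh_n}(\overline{u})\to0$. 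Hence $\delta_{\mesh_n}(\overline{u},u_n)\to0$. By the very definition~\eqref{eq:definter}, $\delta_{\mesh_n}(\overline{u},u_n)$ is the sum of the two nonnegative terms $\frac1{{\rm diam}(\Omega)}\Vert\overline{u}-u_n\Vert_{L^2(\Omega)}$ and $\sqrt{d}\,\Vert\gnc_{\mesh_n}\overline{u}-G_{\mesh_n}u_n\Vert_{L^2(\Omega)}$, so each of them tends to $0$ separately; squaring and using that ${\rm diam}(\Omega)$ is a fixed constant gives exactly~\eqref{eq:cvfvscheme}.

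For the second limit~\eqref{eq:cvgradfvscheme}, I would start from inequality~\eqref{eq:errestgrad} of Lemma~\ref{lem:cvgradcons} applied to $\mesh=\mesh_n$. Under the assumption $\theta_{\mesh_n}\ge\theta_0>0$, the prefactor satisfies $\frac{d}{\theta_{\mesh_n}}\le\frac{d}{\theta_0}$ and is therefore bounded uniformly in $n$. The bracket on the right is a fixed linear combination of $\zeta_{\mesh_n}(\nabla\overline{u}+\bF)$, of $\inf_{v\in\Xmesh}\delta_{\mesh_n}(\overline{u},v)=\mathcal{I}_{\mesh_n}(\overline{u})$ and of $\Theta_{\mesh_n}(\nabla\overline{u})$; the first two tend to $0$ by the lemmas already invoked, while Lemma~\ref{lem:cvtphi}, applied to $\nabla\overline{u}\in L^2(\Omega)^d$, gives $\Theta_{\mesh_n}(\nabla\overline{u})\to0$. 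The product of a bounded sequence by a sequence tending to $0$ tends to $0$, which is~\eqref{eq:cvgradfvscheme}.

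There is no genuine obstacle left, since all the analytic difficulty has been discharged into the error bounds and the three convergence lemmas. The only points requiring a little care are the membership $\nabla\overline{u}+\bF\in\hdiv(\Omega)$ (needed to legitimately invoke Lemma~\ref{lem:cvzeta}, and guaranteed by~\eqref{dans:hdiv}), and the passage from the convergence of the sum $\delta_{\mesh_n}$ to the convergence of the sum of squares in~\eqref{eq:cvfvscheme}, which uses only the nonnegativity of the two summands and the fact that the weight ${\rm diam}(\Omega)$ does not depend on $n$. The role of the lower bound $\theta_0$ on the regularity parameter is, as expected, confined to the gradient estimate~\eqref{eq:cvgradfvscheme}.
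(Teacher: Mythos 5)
Your proposal is correct and follows exactly the paper's own argument: the paper likewise deduces \eqref{eq:cvfvscheme} from Theorem \ref{thm:errest} together with Lemmas \ref{lem:cvzeta} and \ref{lem:cvdelta}, and \eqref{eq:cvgradfvscheme} from Lemmas \ref{lem:cvgradcons} and \ref{lem:cvtphi}. You merely spell out the (correct) bookkeeping steps — nonnegativity of the two summands of $\delta_{\mesh_n}$ and the uniform bound $d/\theta_{\mesh_n}\le d/\theta_0$ — that the paper leaves implicit.
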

\begin{proof}
Applying Lemmas \ref{lem:cvzeta} and \ref{lem:cvdelta}, \eqref{eq:cvfvscheme} is a consequence of Theorem \ref{thm:errest}. We conclude \eqref{eq:cvgradfvscheme} using Lemmas \ref{lem:cvtphi} and \ref{lem:cvgradcons}.
\end{proof}

 \subsection{Error estimate in the $H^2(\Omega)$ case}
 
We now suppose that the exact solution belongs to $H^2(\Omega)$.

\begin{lem}[Estimate of difference with average value in the $H^1$ case]\label{lem:diffhdeux}

 Let $V\subset \mathbb{R}^d$ be a bounded convex open set and let $h_V$  be the diameter of $V$. Let  $\varphi \in H^1(V)$. Then 
 \begin{equation}
\int_V \Big(\frac 1 {|V|} \int_V \varphi(y){\rm d}y - \varphi (x)\Big)^2 {\rm d}x \le  h_V^2  \frac {C_d  h_V^d} {|V|} \int_V |\grad \varphi (z)|^2 {\rm d}z,
\label{maj-varphi}
\end{equation}
where $C_d>0$ is the $d-$dimensional measure of the unit ball of $\R^d$.
\end{lem}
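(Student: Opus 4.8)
The plan is to reduce \eqref{maj-varphi} to a symmetric double-integral estimate over $V\times V$ and then to bound that double integral by a polar-coordinate computation in which the convexity of $V$ is the essential ingredient. Since a bounded convex open set is Lipschitz, $C^\infty(\overline V)$ is dense in $H^1(V)$; as both sides of \eqref{maj-varphi} are continuous for the $H^1$-norm, it suffices to establish the inequality for $\varphi\in C^\infty(\overline V)$ and then pass to the limit.

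Writing $\bar\varphi:=\frac1{|V|}\int_V\varphi(y)\,{\rm d}y$ and using $\bar\varphi-\varphi(x)=\frac1{|V|}\int_V(\varphi(y)-\varphi(x))\,{\rm d}y$, Jensen's inequality for the probability measure $\frac1{|V|}{\rm d}y$ gives
\[
\int_V(\bar\varphi-\varphi(x))^2\,{\rm d}x \le \frac1{|V|}\int_V\int_V(\varphi(x)-\varphi(y))^2\,{\rm d}x\,{\rm d}y =: \frac{J}{|V|}.
\]
The whole problem is thus reduced to proving $J\le C_d\,h_V^{d+2}\int_V|\grad\varphi|^2$.

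To estimate $J$ I would fix $x$ and pass to polar coordinates $y=x+r\omega$ ($r>0$, $\omega\in S^{d-1}$) for the inner integral, denoting by $\rho(\omega)$ the distance from $x$ to $\partial V$ in the direction $\omega$; convexity guarantees that the ray leaves $V$ exactly once and that $\rho(\omega)\le h_V$. Since the segment $[x,x+r\omega]$ stays in $V$, the fundamental theorem of calculus gives $\varphi(x+r\omega)-\varphi(x)=\int_0^r\grad\varphi(x+s\omega)\cdot\omega\,{\rm d}s$, and Cauchy--Schwarz on $[0,r]$ yields $(\varphi(x+r\omega)-\varphi(x))^2\le h_V\int_0^r|\grad\varphi(x+s\omega)|^2\,{\rm d}s$. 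Inserting this into $\int_V(\varphi(x)-\varphi(y))^2\,{\rm d}y=\int_{S^{d-1}}\int_0^{\rho(\omega)}(\cdots)^2 r^{d-1}\,{\rm d}r\,{\rm d}\omega$ and interchanging the $r$- and $s$-integrations produces the weight $\int_s^{\rho(\omega)}r^{d-1}\,{\rm d}r=\frac{\rho(\omega)^d-s^d}{d}\le\frac{h_V^d}{d}$; returning to Cartesian coordinates via ${\rm d}s\,{\rm d}\omega=|x-y|^{-(d-1)}{\rm d}y$ then gives the Riesz-type bound
\[
\int_V(\varphi(x)-\varphi(y))^2\,{\rm d}y \le \frac{h_V^{d+1}}{d}\int_V\frac{|\grad\varphi(y)|^2}{|x-y|^{d-1}}\,{\rm d}y.
\]

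It remains to integrate this in $x$, apply Fubini, and control the kernel by $\int_V|x-y|^{-(d-1)}\,{\rm d}x\le\int_{|z|\le h_V}|z|^{-(d-1)}\,{\rm d}z=|S^{d-1}|\,h_V=d\,C_d\,h_V$, where $C_d=|S^{d-1}|/d$ is the volume of the unit ball. Collecting factors gives $J\le\frac{h_V^{d+1}}{d}\cdot d\,C_d\,h_V\int_V|\grad\varphi|^2=C_d\,h_V^{d+2}\int_V|\grad\varphi|^2$, which is precisely the claim after dividing by $|V|$. The main obstacle, and the only genuine use of convexity, is the polar step: one must justify that for each direction the ray meets $\partial V$ once with $\rho(\omega)\le h_V$, and carry the Fubini interchange together with the two kernel integrations while keeping the constant tight enough to land exactly on $C_d$ rather than on a larger dimensional constant (as a cruder Young-convolution estimate would produce).
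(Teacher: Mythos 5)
Your proof is correct and lands on exactly the constant $C_d h_V^{d+2}$ claimed in \eqref{maj-varphi}, but it takes a genuinely different route from the paper's. Both arguments share the same skeleton: reduce to the double integral $\int_V\int_V(\varphi(x)-\varphi(y))^2\,{\rm d}x\,{\rm d}y$ via Jensen (a step the paper leaves implicit and you rightly make explicit), and then exploit convexity through the fundamental theorem of calculus along the segment $[x,y]$. Where you diverge is in how the factor $C_d h_V^d$ is extracted. The paper parametrizes the segment by $t\in[0,1]$, bounds $[\varphi(y)-\varphi(x)]^2$ by $h_V^2\int_0^1|\grad\varphi(tx+(1-t)y)|^2\,{\rm d}t$, and performs the affine change of variables $z=tx+(1-t)y$ at fixed $(t,y)$; the Jacobian $t^{-d}$ is then compensated by the observation that $z$ lies in $B(y,th_V)$, so that integrating the indicator in $y$ produces $C_d(th_V)^d$ and the singular factor $t^{-d}$ cancels. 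You instead run the classical Riesz-potential proof of the Poincar\'e--Wirtinger inequality on convex sets: polar coordinates centred at $x$, a Fubini swap in $(r,s)$ giving the weight $\int_s^{\rho(\omega)}r^{d-1}\,{\rm d}r\le h_V^d/d$, the intermediate bound $\int_V(\varphi(x)-\varphi(y))^2\,{\rm d}y\le \frac{h_V^{d+1}}{d}\int_V|\grad\varphi(y)|^2|x-y|^{1-d}\,{\rm d}y$, and a final integration of the weakly singular kernel using $|S^{d-1}|=dC_d$. Your use of convexity (each ray from $x$ meets $V$ in an interval of length $\rho(\omega)\le h_V$, so the polar parametrization covers $V$ exactly) is the star-shapedness counterpart of the paper's use of $tV+(1-t)y\subset V$; both are legitimate and your constant bookkeeping checks out at every step. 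The paper's version is slightly more economical in that it avoids polar coordinates and the singular kernel altogether, while yours has the advantage of producing the reusable pointwise Riesz bound as a by-product; the density arguments ($C^\infty(\overline V)$ for a Lipschitz convex set versus restrictions of $C^1_c(\R^d)$) are interchangeable here.
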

\begin{proof}
The proof of \eqref{maj-varphi} is inspired by that of \cite[Lemma 10.2]{book}, which is expressed in a discrete setting and is in fact trickier than the present proof. By density of $C^1_c(\R^d)$ in $H^1(V)$, we only have to prove \eqref{maj-varphi} for $\varphi \in C^1_c(\R^d)$. For $x$, $y \in V$,
\[
[\varphi(y)-\varphi(x)]^2=\big[\!\!\int_0^1 \!\!\grad \varphi(tx+(1-t)y)\cdot (x-y) {\rm d}t\big]^2 \!\!\le h_V^2\!\! \int_0^1 \!\!|\grad \varphi(tx+(1-t)y)|^2 {\rm d}t,
\]
so that owing to the Fubini-Tonelli theorem,
\begin{align*}
 \int_V \int_V [\varphi(y)-\varphi(x)]^2 {\rm d}x {\rm d}y &\le h_V^2\int_V  \int_V \int_0^1 |\grad \varphi(tx+(1-t)y)|^2 {\rm d}t {\rm d}x {\rm d}y
\\ 
& \le h_V^2 \int_V  \int_0^1 \int_V |\grad \varphi(tx+(1-t)y)|^2 {\rm d}x {\rm d}t {\rm d}y.
\end{align*}
Applying the change of variable (with $t$ and $y$ fixed) $z=tx+(1-t)y$,  noticing that $z \in V \cap B(y,t h_V)$,  and again applying the Fubini-Tonelli theorem, we thus get that 
\begin{align*}
\int_V \int_V (\varphi(y)-\varphi(x))^2 {\rm d}x {\rm d}y  
&\le  h_V^2 \!\!\int_V \!\bigl[ \int_0^1 \!\!\bigl(\!  \int_V\!\!  1_{B(z, th_V)}(y)  {\rm d}y \bigr)     t^{-d} {\rm d}t \bigr]   |\grad \varphi(z)|^2 {\rm d}z 
\\
&\le h_V^2 C_d h_V^d   \int_V  |\grad \varphi(z)|^2 {\rm d}z.
\end{align*}
The proof of  \eqref{maj-varphi} is thus complete.
\end{proof}

\begin{lem}[Estimate of $\Theta_{\mesh}(\nabla\overline{u})$ in the $H^2$ case]\label{lem:tmnablauhdeux}

 Let  $\overline{u}\in H^2(\Omega)$. 
 Then 
 \begin{equation}
\Theta_{\mesh}(\nabla\overline{u})\le  \frac {h_{\mesh}} {\theta_{\mesh}^{d/2}}  \Vert \overline{u}\Vert_{H^2}.
\label{maj-tmphi}
\end{equation}
\end{lem}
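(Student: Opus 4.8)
The plan is to bound $\Theta_{\mesh}(\nabla\overline{u})^2$ one control volume at a time, reducing each cell contribution to the single-cell oscillation estimate proved while establishing Lemma~\ref{lem:diffhdeux}, and then to absorb the resulting geometric ratio $h_K^d/|K|$ into the regularity parameter $\theta_{\mesh}$.

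First, since $\overline{u}\in H^2(\Omega)$, every component $\partial_i\overline{u}$ of $\nabla\overline{u}$ lies in $H^1(\Omega)$, and the Hessian $D^2\overline{u}$, with $|D^2\overline{u}|^2=\sum_{i,j}(\partial_i\partial_j\overline{u})^2$, satisfies $\sum_{i=1}^d|\nabla(\partial_i\overline{u})|^2=|D^2\overline{u}|^2$ a.e.\ and $\Vert D^2\overline{u}\Vert_{L^2(\Omega)}\le\Vert\overline{u}\Vert_{H^2}$. On each control volume $K$ (open, bounded, convex, of diameter $h_K\le h_{\mesh}$), I would apply the double-integral inequality
\[
\int_K\int_K[\varphi(y)-\varphi(x)]^2\,{\rm d}x\,{\rm d}y\le h_K^2\,C_d\,h_K^d\int_K|\nabla\varphi(z)|^2\,{\rm d}z,
\]
which is exactly the bound displayed in the proof of Lemma~\ref{lem:diffhdeux}, to each scalar field $\varphi=\partial_i\overline{u}$ and sum over $i$, obtaining
\[
\int_K\int_K|\nabla\overline{u}(y)-\nabla\overline{u}(x)|^2\,{\rm d}x\,{\rm d}y\le h_K^2\,C_d\,h_K^d\int_K|D^2\overline{u}(z)|^2\,{\rm d}z.
\]
I would deliberately use this double-integral form rather than the averaged statement~\eqref{maj-varphi}, since the two are linked by a variance identity carrying a factor $2$ that would spoil the sharp constant in~\eqref{maj-tmphi}.

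The key remaining ingredient, and the main obstacle, is a lower bound $|K|\ge C_d\,\theta_{\mesh}^{\,d}\,h_K^{\,d}$. I would get it by noting that, by item (ii) of Definition~\ref{def:meshdirichlet}, $\partial K$ is the union of the closed faces $\overline\sigma$, each contained in the hyperplane through $\sigma$ lying at orthogonal distance $d_{K,\sigma}$ from $x_K$; hence every boundary point of $K$ is at distance at least $\min_{\sigma\in\mathcal{E}_K}d_{K,\sigma}$ from $x_K$, so the open ball $B\!\left(x_K,\min_\sigma d_{K,\sigma}\right)$ is contained in $K$. Since $\min_\sigma d_{K,\sigma}\ge\theta_{\mesh}\,{\rm diam}(K)=\theta_{\mesh}h_K$ by the definition of $\theta_{\mesh}$, this gives $|K|\ge C_d(\theta_{\mesh}h_K)^d$, that is $C_d\,h_K^d/|K|\le\theta_{\mesh}^{-d}$. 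Dividing the previous cell estimate by $|K|$ therefore yields
\[
\frac1{|K|}\int_K\int_K|\nabla\overline{u}(y)-\nabla\overline{u}(x)|^2\,{\rm d}x\,{\rm d}y\le\frac{h_K^2}{\theta_{\mesh}^{\,d}}\int_K|D^2\overline{u}|^2\le\frac{h_{\mesh}^2}{\theta_{\mesh}^{\,d}}\int_K|D^2\overline{u}|^2.
\]

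Finally I would sum over $K\in\mesh$, recall the definition~\eqref{eq:deftphi} of $\Theta_{\mesh}$, and use $\Vert D^2\overline{u}\Vert_{L^2(\Omega)}\le\Vert\overline{u}\Vert_{H^2}$ to reach $\Theta_{\mesh}(\nabla\overline{u})^2\le h_{\mesh}^2\,\theta_{\mesh}^{-d}\Vert\overline{u}\Vert_{H^2}^2$; taking square roots gives~\eqref{maj-tmphi}. I expect the only genuinely delicate point to be the inscribed-ball estimate, which hinges on reading $d_{K,\sigma}$ as the orthogonal distances to the face-carrying hyperplanes guaranteed by Definition~\ref{def:meshdirichlet}; everything else is a direct assembly of Lemma~\ref{lem:diffhdeux} and the definition of $\theta_{\mesh}$.
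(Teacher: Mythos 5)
Your proof is correct and follows essentially the same route as the paper: the inscribed ball $B(x_K,\theta_{\mesh}h_K)\subset K$ giving $C_d h_K^d/|K|\le\theta_{\mesh}^{-d}$, then the single-cell oscillation estimate of Lemma~\ref{lem:diffhdeux} applied componentwise to $\nabla\overline{u}$ and summed over the cells. Your one refinement — invoking the double-integral inequality displayed inside the proof of Lemma~\ref{lem:diffhdeux} rather than the averaged statement~\eqref{maj-varphi}, whose literal use would cost a factor $\sqrt{2}$ through the variance identity — is exactly what is needed to obtain the constant as stated in~\eqref{maj-tmphi}, and is arguably more precise than the paper's wording ``applying \eqref{maj-varphi}''.
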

\begin{proof}
Let $K\in \mesh$ be a given control volume, and let $h_K$  be the diameter of $K$. 
By definition of $\theta_{\mesh}$, for any edge or face $\sigma$ of $K$,  $d_{K,\sigma}\ge \theta_{\mesh} h_K$; thus, the ball with center $x_K$ and radius $ \theta_{\mesh} h_K$ has $d-$dimensional measure $C_d  \times (\theta_{\mesh} h_K)^d$ and is included in $K$.
Now  $|K| \ge C_d \inf_{\sigma \in \mathcal{E}_K} d_{K, \sigma}^d$, and by definition of $\theta_{\mesh}$, $\inf_{\sigma \in  \mathcal{E}_K} d_{K, \sigma} \ge \theta_{\mesh} h_K$; therefore, $h_K^d \le \dfrac { |K|}{C_d \theta_{\mesh}^d}$; applying  \eqref{maj-varphi} to each of the components of $\nabla\overline{u}$ provides \eqref{maj-tmphi}.
\end{proof}

\begin{lem}[Error estimate in the $H^2$ case]\label{lem:errorhdeux}

 Let   $d\le 3$, $\overline{u}\in H^2(\Omega)$ and $\bF = 0$. Letting $u\in \Xmesh$ be defined by $u_K = \overline{u}(x_K)$,
 Then there exists $C$,  only depending on $d$, $\Omega$ and continuously depending on $\theta_{\mesh}$,  such that
 \[
  \Vert u - \overline{u}\Vert_{L^2(\Omega)} + \Vert \widehat\nabla_{\mesh} u - \nabla \overline{u}\Vert_{L^2(\Omega)^d} \le C h_{\mesh} \Vert \overline{u}\Vert_{H^2}.
 \]
\end{lem}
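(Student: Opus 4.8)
The plan is to take for $u$ the interpolant with cell values $u_K=\overline{u}(x_K)$ — these point values are well defined since $d\le 3$ gives $H^2(\Omega)\hookrightarrow C^0(\overline{\Omega})$ — and face values $u_\sigma$ fixed by the flux–conservation relation \eqref{eq:usigma} for interior faces and $u_\sigma=0$ for $\sigma\in\edgesext$, exactly as the interpolant built in the proof of Lemma \ref{lem:cvdelta}. With this choice, and using the orthogonality built into Definition \ref{def:meshdirichlet} (so that $(x_L-x_K)/|x_L-x_K|={\bm n}_{K,\sigma}$ and $d_{K,\sigma}+d_{L,\sigma}=|x_L-x_K|$ for $\sigma=K|L$), the normal discrete derivative reads $G_{\mesh}u=(\overline{u}(x_L)-\overline{u}(x_K))/|x_L-x_K|$ on $D_{K,\sigma}$. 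I would then split the target quantity into the piecewise-constant $L^2$ error $\Vert u-\overline{u}\Vert_{L^2(\Omega)}$ and the consistent-gradient error $\Vert\widehat\nabla_{\mesh}u-\nabla\overline{u}\Vert_{L^2(\Omega)^d}$, and treat them separately.

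For the first term I would bound, cell by cell, $\int_K(\overline{u}(x_K)-\overline{u}(x))^2\dx$. Writing $\overline{u}(x_K)-\overline{u}(x)=(\overline{u}(x_K)-m_K)+(m_K-\overline{u}(x))$ with $m_K$ the mean of $\overline{u}$ on $K$, the second piece is controlled by Lemma \ref{lem:diffhdeux} together with the inequality $h_K^d\le|K|/(C_d\theta_{\mesh}^d)$ already established in the proof of Lemma \ref{lem:tmnablauhdeux}, which turns \eqref{maj-varphi} into a bound of order $h_K$ times the $H^1$-seminorm on $K$; the first piece is the point-value-minus-mean functional, which vanishes on constants and is bounded on $H^2(K)$ (here $d\le 3$ is used once more), so a scaling argument on the convex cell $K$ — whose inner and outer ball radii are comparable through $\theta_{\mesh}$ — yields a bound of order $h_K$ by $\Vert\overline{u}\Vert_{H^2(K)}$. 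Summing the squares over $K\in\mesh$ gives $\Vert u-\overline{u}\Vert_{L^2(\Omega)}\le C h_{\mesh}\Vert\overline{u}\Vert_{H^2}$ with $C$ depending continuously on $\theta_{\mesh}$.

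The gradient term I would route through Lemma \ref{lem:cvgradcons}, which gives $\Vert\widehat\nabla_{\mesh}u-\nabla\overline{u}\Vert_{L^2(\Omega)^d}\le(d/\theta_{\mesh})(\Vert G_{\mesh}u-\gnc_{\mesh}\overline{u}\Vert_{L^2(\Omega)}+\Theta_{\mesh}(\nabla\overline{u}))$. The term $\Theta_{\mesh}(\nabla\overline{u})$ is already handled by Lemma \ref{lem:tmnablauhdeux}, so everything reduces to the two-point flux consistency estimate $\Vert G_{\mesh}u-\gnc_{\mesh}\overline{u}\Vert_{L^2(\Omega)}\le C h_{\mesh}\Vert\overline{u}\Vert_{H^2}$. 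On each cone $D_{K,\sigma}$ this error is the difference between the finite quotient $(\overline{u}(x_L)-\overline{u}(x_K))/|x_L-x_K|$ and the cone average of $\nabla\overline{u}\cdot{\bm n}_{K,\sigma}$; the crucial point is that, thanks to the orthogonality, this per-face functional is exact on affine functions (for $\overline{u}(x)=a+b\cdot x$ both quantities equal $b\cdot{\bm n}_{K,\sigma}$), which is precisely what buys the extra power of $h$. I would make this quantitative, for $\overline{u}\in C^2$ and then by density, by the classical sweeping argument: express the consistency error as an integral of $\nabla^2\overline{u}$ along segments parallel to ${\bm n}_{K,\sigma}$, sweep the base across $\sigma$ to turn the one-dimensional integrals into a volume integral of $|\nabla^2\overline{u}|^2$ over the diamond $D_\sigma=D_{K,\sigma}\cup D_{L,\sigma}$, and let the measure factors $|\sigma|$, $d_{K,\sigma}+d_{L,\sigma}$ and $|D_{K,\sigma}|=|\sigma|d_{K,\sigma}/d$ combine — with the help of $d_{K,\sigma}\ge\theta_{\mesh}h_K$ — into a constant depending only on $d$ and $\theta_{\mesh}$. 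Boundary faces are treated identically, using $u_\sigma=0$ and the trace $\overline{u}=0$ on $\sigma\subset\partial\Omega$ together with the admissibility property $\mathcal{D}_{K,\sigma}\cap\sigma\neq\emptyset$, so that $\overline{u}(x_K)=\overline{u}(x_K)-\overline{u}(y_\sigma)$ with $y_\sigma$ the foot of the perpendicular on $\sigma$. Summing $|D_{K,\sigma}|$ times the squared per-face bound and using both $|D_{K,\sigma}|+|D_{L,\sigma}|=|D_\sigma|$ and $\sum_\sigma\int_{D_\sigma}|\nabla^2\overline{u}|^2=|\overline{u}|_{H^2(\Omega)}^2$ (each cone $D_{M,\sigma}$ is counted exactly once) then gives the desired estimate; combined with the cell bound this finishes the proof.

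The main obstacle is the flux-consistency estimate of the last paragraph, and specifically arranging the measure bookkeeping so that the constant depends only on $d$, $\Omega$ and continuously on $\theta_{\mesh}$. This is why I would deliberately avoid a Bramble–Hilbert bound on the diamond $D_\sigma$ — whose shape regularity is not controlled by $\theta_{\mesh}$ alone when $\sigma$ is anisotropic — and instead rely on the one-dimensional sweeping along the normal, which uses nothing more than $d_{K,\sigma}\ge\theta_{\mesh}h_K$, the orthogonality, and the affine-exactness that makes $\nabla^2\overline{u}$ the only surviving contribution.
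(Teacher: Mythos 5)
The statement as printed contains a glitch (the clause ``Letting $u\in \Xmesh$ be defined by $u_K=\overline{u}(x_K)$'' is a leftover from the construction of the auxiliary interpolant), and you have resolved the ambiguity the wrong way: in the paper's own proof $u$ is the solution of the finite volume scheme \eqref{eq:fvscheme}, not an interpolant of $\overline{u}$ --- the lemma is the $H^2$ version of the classical first-order estimate \eqref{eq:introerrest}, extended to the consistent gradient. Your argument only ever manipulates the interpolant (cell values $\overline{u}(x_K)$, face values fixed by flux conservation), so what you actually establish is the interpolation-error bound $\inf_{v\in\Xmesh}\delta_{\mesh}(\overline{u},v)\le C h_{\mesh}\Vert\overline{u}\Vert_{H^2}$, i.e.\ inequality \eqref{eq:erreur_interp}, which is precisely the step the paper delegates to the computations of \cite[Theorem 9.4]{book}. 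For that step your plan is sound: the $L^2$ part via the cell mean, Lemma \ref{lem:diffhdeux} and $h_K^d\le |K|/(C_d\theta_{\mesh}^d)$; the gradient part via the first inequality of Lemma \ref{lem:cvgradcons} (which indeed holds for arbitrary $u\in\Xmesh$), Lemma \ref{lem:tmnablauhdeux}, and the sweeping consistency estimate for the two-point flux, whose affine-exactness and $\theta_{\mesh}$-only bookkeeping you identify correctly.

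What is missing is the passage from the interpolant to the discrete solution. The paper obtains it from Theorem \ref{thm:errest} and \eqref{eq:errestgrad}, and these require, besides the interpolation error, a bound on the conformity error: one must prove $\zeta_{\mesh}(\nabla\overline{u})\le C h_{\mesh}\Vert\overline{u}\Vert_{H^2}$, which the paper does by rewriting $\int_\Omega({\rm div}\nabla\overline{u}\,v+\nabla\overline{u}\cdot\nabla_{\mesh}v)\dx$ as a sum over faces of $|\sigma|(v_\sigma-v_K)\bigl({\overline\nabla\overline{u}}_{K,\sigma}-{\overline\nabla\overline{u}}_{\sigma}\bigr)\cdot{\bm n}_{K,\sigma}$ and applying the Cauchy--Schwarz inequality. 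This ingredient is entirely absent from your proposal, and nothing in your argument controls $\delta_{\mesh}(\overline{u},u)$ for the scheme solution $u$: consistency of an interpolant alone does not bound the discretization error. If you add the conformity bound and then invoke \eqref{eq:errest} and \eqref{eq:errestgrad}, your detailed consistency work slots in exactly where the paper cites \cite[Theorem 9.4]{book}, and the proof is complete.
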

\begin{proof}
In this proof, we denote by $C_i$, for $i\in\mathbb{N}$, various real functions only depending on $d$, $\Omega$ and continuously depending on $\theta_{\mesh}$.
Let us first observe that
\begin{equation}\label{eq:erreur_interp}
 \inf_{v\in \Xmesh} \delta_{\mesh}(\overline{u},v)\le \cter{cte:c1}  h_{\mesh} \Vert \overline{u}\Vert_{H^2}.
\end{equation}
Indeed, following the computations of \cite[Theorem 9.4]{book} (which involves the regularity factor $\theta_{\mesh}$), and denoting $\widetilde{u}\in \Xmesh$ defined by $\widetilde{u}_K = \overline{u}(x_K)$ and $\widetilde{u}_\sigma$ such that 
\[
  |\sigma|\frac {\widetilde u_\sigma - \widetilde u_K} {d_{K,\sigma}} + |\sigma|\frac {\widetilde  u_\sigma - \widetilde u_L} {d_{L,\sigma}} = 0,\hbox{ for all } \sigma\in \mathcal{E}_K\cap \mathcal{E}_L,
\]
we get the existence of $\ctel{cte:c1}$ such that
\[
 \delta_{\mesh}(\overline{u},\widetilde{u})\le \cter{cte:c1}  h_{\mesh} \Vert \overline{u}\Vert_{H^2}.
\]
Since $\nabla\overline{u}\in H^1(\Omega)^d \subset \hdiv(\Omega)$, we have, for some $v\in \Xmesh$ with $\Vert v\Vert_{\mesh} = 1$, using the Cauchy-Schwarz inequality,
 \begin{align*}
  \int_\Omega ({\rm div}{\nabla\overline{u}}(x)v(x) +{\nabla\overline{u}}(x)\cdot \nabla_{\mesh}v(x)) {\rm d}x &= \sum_{K\in\mesh}\sum_{\sigma\in\mathcal{E}_K}|\sigma|(v_\sigma-v_K)\big( {\overline\nabla\overline{u}}_{K,\sigma}\cdot {\bm n}_{K,\sigma} -  {\overline\nabla\overline{u}}_{\sigma}\cdot {\bm n}_{K,\sigma} \big)\\
  &\le \Vert v\Vert_{\mesh}\big( \sum_{K\in\mesh}\sum_{\sigma\in\mathcal{E}_K}|\sigma| d_{K,\sigma}|{\overline\nabla\overline{u}}_{K,\sigma} -  {\overline\nabla\overline{u}}_{\sigma}|^2\big)^{1/2},
\end{align*}
where
\[
 {\overline\nabla\overline{u}}_{K,\sigma} =  \frac 1 {|D_{K,\sigma}|}\int_{D_{K,\sigma}}{\nabla\overline{u}}(x) {\rm d}x
\mbox{ and }{\overline\nabla\overline{u}}_{\sigma} =  \frac 1 {|\sigma|}\int_{\sigma}{\nabla\overline{u}}(x) \dfrontiere .
\]
Hence
\[
 \zeta_{\mesh}({\nabla\overline{u}}) \le \ctel{cte:c2} h_{\mesh}  \Vert \overline{u}\Vert_{H^2}.
\]
Using \eqref{maj-tmphi}, \eqref{eq:errest}, \eqref{eq:erreur_interp} and \eqref{eq:errestgrad}, the conclusion follows.
\end{proof}

\begin{remark}[Superconvergence]\label{rem:superconvergence}
 The superconvergence of the TPFA scheme on  a class of acute triangles in 2D was observed numerically long ago \cite{herbin-labergerie,BOIVIN2000806,2Dbench}, and was proved theoretically in \cite{droniou-nataraj} more recently; the proof of \cite{droniou-nataraj} relies on the fact that on a large class of 2D triangular grids, the TPFA scheme is a hybrid mixed method as defined in \cite{dro-10-uni}.
\end{remark}

\section{Numerical example of a minimal regularity problem}\label{sec:num}

We illustrate here the error estimate of Theorem \ref{thm:errest} in the case where the regularity of the solution of Problem \eqref{eq:pbellcont} is minimal.
Let $\Omega = (0,1)^2$, and let $\overline{u}\in H^1_0(\Omega)$ be defined by 
\begin{equation}\label{eq:solirr}
  \overline{u}(x) = \Big(-\log\big( \max(|x_1 - \frac 1 2|,|x_2 - \frac 1 2|)\big)\Big)^\gamma -  \Big(-\log( r_0 )\Big)^\gamma,\ x=(x_1,x_2)\in\Omega,
\end{equation}
with $\gamma = \frac 1 4$ and $r_0 = 1/2$. 
Then $\overline{u}$ is solution to \eqref{eq:pbellcont}, letting $f = 0$ and $\bF(x) = - \nabla\overline{u}(x)$. We have, for any $p>1$,
 \[
\int_\Omega|\nabla\overline{u}(x)|^p{\rm d}x = 4\int_0^{r_0} \int_{-x}^{x} \frac {\gamma^p} {x^p}\Big(-\log( x)\Big)^{p(\gamma-1)} {\rm d}y{\rm d}x = 8\int_0^{r_0} \frac {\gamma^p} {x^{p-1}}\Big(-\log( x)\Big)^{p(\gamma-1)} {\rm d}y{\rm d}x.
 \]
This integral is infinite for any $p>2$, which shows that $\overline{u} \not \in  W^{1,p}_0(\Omega)$ if $p>2$; this was expected since $\overline{u}$ is discontinuous and $W^{1,p}_0(\Omega)\subset C^0(\overline{\Omega})$ for any $p>2$ in 2D.
Now if $p=2$, we get, since $2\gamma-1<0$,
 \[
\int_\Omega|\nabla\overline{u}(x)|^2{\rm d}x = 8\int_0^{r_0}  \frac {\gamma^2} {x}\Big(-\log( x)\Big)^{2\gamma-2} {\rm d}x = \frac {8\gamma^2} {1 - 2\gamma}\Big(-\log(r_0)\Big)^{2\gamma-1},
 \]
which shows that $\nabla\overline{u}(x)\in L^2(\Omega)^2$ with $\Vert \nabla\overline{u}\Vert_{L^2} \simeq 1.0959573$. 
Let us now compute $\Vert \overline{u}\Vert_{L^2}$. We have
\[
 \int_\Omega\overline{u}(x)^2{\rm d}x = 4\int_0^{r_0} \int_{-x}^{x} \Big(-\log( x)\Big)^{2\gamma} {\rm d}y{\rm d}x - 2\Big(-\log(r_0)\Big)^\gamma 4\int_0^{r_0} \int_{-x}^{x} \Big(-\log( x)\Big)^{\gamma} {\rm d}y{\rm d}x+\Big(-\log(r_0)\Big)^{2\gamma},
\]
which gives
\[
 \int_\Omega\overline{u}(x)^2{\rm d}x = 8\int_0^{r_0} x \Big(-\log( x)\Big)^{2\gamma} {\rm d}x - 2\Big(-\log(r_0)\Big)^\gamma 8\int_0^{r_0} x \Big(-\log( x)\Big)^{\gamma} {\rm d}x+(2r_0)^2\Big(-\log(r_0)\Big)^{2\gamma}.
\]
Owing to the change of variable $-\log(x) = y/2$, we get
 \[
 \int_0^{r_0}  x\Big(-\log( x)\Big)^{2\gamma} {\rm d}x = \frac {1} {2^{2\gamma+1}}\int_{-2\log(r_0)}^\infty y^{2\gamma} e^{-y}{\rm d}y.
 \]
Introducing the upper incomplete gamma function $\Gamma$ defined by
 \[
  \Gamma(a,x) =  \int_x^{+\infty} t^{a - 1} e^{-t}{\rm d}t,
 \]
 we get
 \[
 \int_0^{r_0}  x\Big(-\log( x)\Big)^{2\gamma} {\rm d}x = \frac {1} {2^{2\gamma+1}}\Gamma(2\gamma+1,-2\log(r_0)). 
 \]
Again using the change of variable $-\log(x) = y/2$, we have
 \[
  \int_0^{r_0} x \Big(-\log( x)\Big)^{\gamma} {\rm d}x = \frac 1 {2^{\gamma+1}}\int_{-2\log(r_0)}^\infty y^{\gamma} e^{-y}{\rm d}y =  \frac 1 {2^{\gamma+1}}\Gamma(\gamma+1,-2\log(r_0))
 \]
Finally we obtain
 \begin{multline*}
 \int_\Omega\overline{u}(x)^2{\rm d}x = 2^{2 -2\gamma}\Gamma(2\gamma+1,-2\log(r_0))\\ - 2^{3-\gamma}\Big(-\log(r_0)\Big)^\gamma \Gamma(\gamma+1,-2\log(r_0))+(2r_0)^2\Big(-\log(r_0)\Big)^{2\gamma},
\end{multline*}
leading to $\Vert \overline{u}\Vert_{L^2} \simeq 0.1519926$.

\medskip

Let us then set $f = 0$ and $\bF(x) = - \nabla\overline{u}(x)$ in the finite volume scheme \eqref{eq:fvscheme}. 
Since $\nabla\overline{u}+\bF = 0$, we get that the conformity error $\zeta_{\mesh}(\nabla\overline{u}+\bF)$ vanishes. 
Hence the numerical validation of the error estimate of Theorem \ref{thm:errest} follows by a comparison of the approximation error $\delta_{\mesh}(\overline{u},u)$ and the upper bound of the interpolation error $\delta_{\mesh}(\overline{u},\overline{u}_{\mesh})$ for the interpolation $\overline u_\mesh\in X_{\mesh}$ of the exact solution $\overline{u}$, defined by the set of values $((\overline{u}(x_K))_{K\in\mesh},(\overline{u}(x_\sigma))_{\sigma\in\edges})$, where $x_\sigma$ is the middle of the edge $\sigma$ (it is then the intersection of the segment $[x_K,x_L]$ and of $\sigma$ when $\sigma$ is the common edge between $K$ and $L$).

\medskip

The right-hand side $\int_\Omega \nabla\overline{u}(x)\cdot \nabla_{\mesh}v(x) {\rm d}x$ can be computed by using the relation
 \[
  \int_{D_{K,\sigma}}\nabla\overline{u}(x)\cdot {\bm n}_{K,\sigma} {\rm d}x = \int_{\partial D_{K,\sigma}}\overline{u}(x) {\bm n}(y)\cdot {\bm n}_{K,\sigma} {\rm ds}(y),
 \]
 where ${\bm n}(y)$ is the outward unit vector normal to $\partial D_{K,\sigma}$ at the point $y$. 
 This latter integral is then a linear combination of terms under the form
 \begin{multline*}
\int_{0}^{1} \Big(-\log(\alpha + (\beta-\alpha) s)\Big)^\gamma {\rm d}s =  -\frac 1 {\beta -\alpha} \int_{-\log(\alpha)}^{-\log(\beta)} t^\gamma e^{-t} {\rm d}t \\ =\frac 1 {\beta -\alpha} \Big(\Gamma(\gamma+1,-\log(\beta))-\Gamma(\gamma+1,-\log(\alpha))\Big),
 \end{multline*}
Hence we can accurately compute the right-hand side for the computation of the approximate solution, even in the case where the elements have the point $(1/2,1/2)$ as vertex since we can extend by continuity the value of the function $g(\alpha) = \Gamma(a,-\log(\alpha))$  to 0 by setting $g(0) = 0$.
 
Note that the main part of $\Vert \nabla\overline{u}\Vert_{L^2(\Omega)^2}$ is concentrated inside a small ball. Indeed, denoting by $B_\infty(r)= \{ x\in\Omega, \max(|x_1 - \frac 1 2|,|x_2 - \frac 1 2|)<r\}$ (hence $\Omega = B_\infty(r_0)$), we get the following values for $\Vert \nabla\overline{u}\Vert_{L^2(B_\infty(r))^2}$ and $\Vert \overline{u}\Vert_{L^2(B_\infty(r))}$:

\medskip

\begin{tabular}{|c|c|c|c|c|c|}
  \hline
  $r$ & 0.5 & $\exp(-10)$ &  $\exp(-1000)$ &  $\exp(-1000000)$ &  $\exp(-1000000000)$   \\
  \hline
  $\Vert \nabla\overline{u}\Vert_{L^2(B_\infty(r))^2}$ & 1.096 & 0.562 & 0.178 & 0.032 & 0.006\\
  \hline
  $\Vert \overline{u}\Vert_{L^2(B_\infty(r))}$ & 0.152 & 2.71e-6 & < 1.e-200 & < 1.e-200000 & <1.e-200000000\\
  \hline
\end{tabular}

\medskip

It is clear from this table that $\lim_{r\to 0}\Vert \nabla\overline{u}\Vert_{L^2(B_\infty(r))^2} = 0$; however, this convergence is so slow that a numerical scheme based on a standard mesh cannot be expected to show a numerical convergence of the approximation of $\nabla\overline{u}$ for the $L^2$ norm, even with a refined mesh at the point $(0.5,0.5)$ (see Remark \ref{rem:compdelta}).  
Nevertheless, we can numerically assess the result proven in Theorem \ref{thm:errest} by considering the family of benchmark meshes \cite[Mesh1\_k]{2Dbench} with $k= 1,2,3,4,5,6,7$, which can be respectively characterized by the values $h_{\mesh} = \frac 1 4,\frac 1 {8},\frac 1 {16},\frac 1 {32},\frac 1 {64},\frac 1 {128},\frac 1 {256}$ for the size of the mesh. 
The following table gives the number of control volumes (cv), vertices and edges for each mesh.

\begin{center}\begin{tabular}{|c|c|c|c|c|c|c|c|c|c|}
  \hline
  mesh & $h_{\mesh}$ & \# cv &  \# vertices & \#  edges  \\
  \hline
 Mesh1\_1 &  $\frac 1 4$ &  56  &  37  &  92  \\
  \hline
 Mesh1\_2 & $\frac 1 {8}$ & 224  &  129  &  352  \\
  \hline
Mesh1\_3 &  $\frac 1 {16}$ &  896  &  481  &  1376  \\
  \hline
Mesh1\_4 &  $\frac 1 {32}$ &  3584  &  1857  &  5440  \\
  \hline
Mesh1\_5 &  $\frac 1 {64}$ &  14336  &  7297  &  21632  \\
  \hline
Mesh1\_6 &  $\frac 1 {128}$ &   57344  &  28929  &  86272  \\
  \hline
Mesh1\_7 &  $\frac 1 {256}$ & 229376  &  115201  &  344576  \\
  \hline
 \end{tabular} \end{center} 

 \medskip 
 
We show in Figure \ref{fig:resuirrell} a numerical solution obtained on  Mesh1\_3.
\begin{figure}
\resizebox{12cm}{!}{ \includegraphics{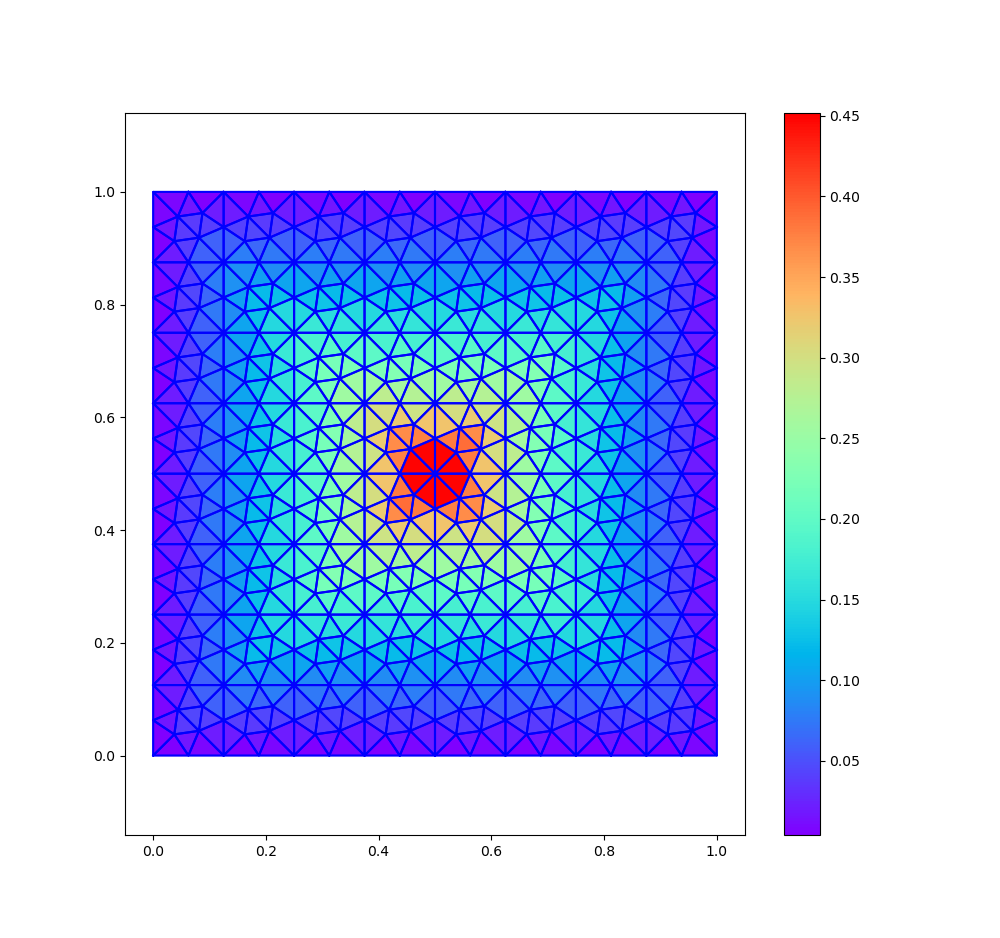}}
\caption{The approximate solution of Problem  \eqref{eq:pbellcont} computed on Mesh1\_3.}\label{fig:resuirrell}
\end{figure}

Note that these meshes share the same regularity factor $\theta_\mesh$ but do not feature any symmetries that might increase the order of convergence. 
 
For each mesh, we consider the interpolation $\overline{u}_{\mesh}\in \Xmesh$ of $\overline{u}$ defined as specified above. 
Figure \ref{fig:error} provides the approximation error and the interpolation error obtained on each mesh.
%

We observe that, as in the regular case, the error $\Vert \overline{u}_{\mesh} - u\Vert_{L^2}$ is much smaller than $\Vert \overline{u} - u\Vert_{L^2}$ (recall that superconvergence is observed on regular solutions), and that we observe an order 1 in the convergence of $\Vert \overline{u} - u \Vert_{L^2}$ to $0$. 
When accounting for the error on the gradient, we get that the approximation error behaves similarly for the small values of $h_{\mesh}$ as the interpolation error. Nevertheless, Figure \ref{fig:error} does not show that the quantities $\delta_{\mesh}(\overline{u},\overline{u}_{\mesh})$ and $\Vert \gnc\overline{u} - G_{\mesh}u\Vert_{L^2}$   behave as $(h_{\mesh})^\alpha$ for some $\alpha >0$. 
Our conjecture is that these terms behave as $(-\log( h_{\mesh})) ^{-\alpha}$ but the range of mesh sizes that can be tested is not sufficient to assess it.

\begin{center} 
\pgfplotstableread[col sep=&,row sep=\\]{
 h & e1 & e2 & e3 & e4 & e5 \\
 0.25&  1.116E-03 & 3.587E-02 & 2.193E-03 &  2.847E-02        &  7.867E-02 \\
0.125 &  5.383E-04 & 1.869E-02 & 2.256E-03 &   1.641E-02     & 7.207E-02 \\
0.0625 &  2.484E-04 & 9.624E-03 & 2.110E-03 &   9.789E-03    & 6.619E-02 \\
0.03125&   1.137E-04 & 4.920E-03 & 1.930E-03 &    6.208E-03  & 6.115E-02 \\
0.015625 &  5.219E-05 & 2.503E-03 & 1.764E-03 &   4.265E-03    & 5.687E-02 \\
0.0078125 &   2.409E-05 & 1.270E-03 & 1.621E-03 &   3.190E-03  & 5.317E-02 \\
0.00390625 &  1.119E-05 & 6.423E-04 & 1.499E-03 &   2.574E-03   & 4.999E-02 \\
}\mytable

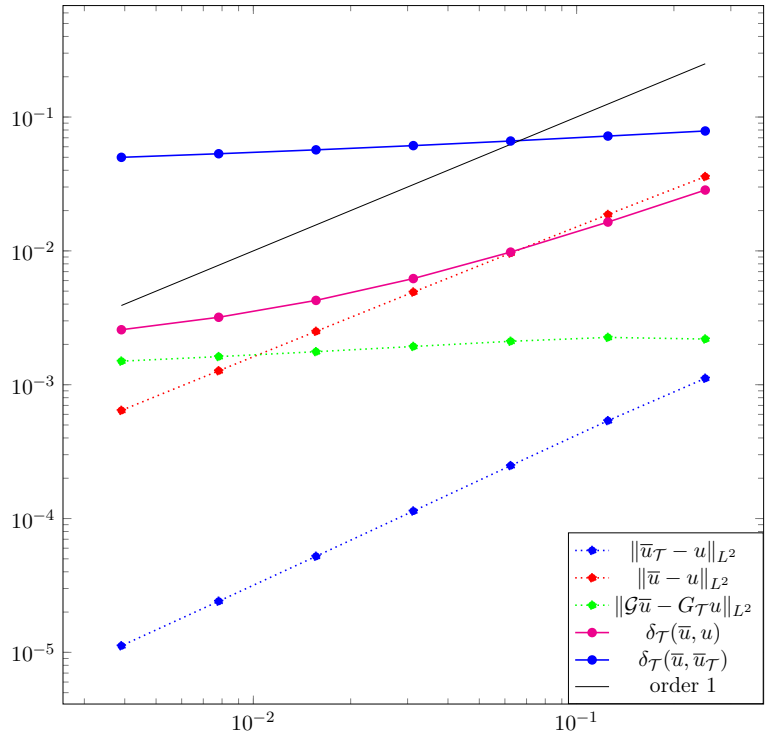
\begin{figure}[ht!]
 \resizebox{10cm}{!}{\begin{tikzpicture}
\begin{loglogaxis}[width=14cm, height=14cm,
legend style = {at={ (1,0)}, anchor = south east}]
        \addplot [color=blue, dotted, thick, mark=*
        ] table [x = h, y = e1] {\mytable};
        \addlegendentry{$\Vert \overline{u}_{\mesh} - u\Vert_{L^2}$}
        \addplot [color=red, dotted, thick, mark=*
        ] table [x = h, y = e2] {\mytable};
        \addlegendentry{$\Vert \overline{u} - u\Vert_{L^2}$}
        \addplot [color=green, dotted, thick, mark=*
        ] table [x = h, y = e3] {\mytable};
        \addlegendentry{$\Vert \gnc\overline{u} - G_{\mesh}u\Vert_{L^2}$ }
        \addplot [color=magenta, thick, mark=*
        ] table [x = h, y = e4] {\mytable};
        \addlegendentry{$\delta_{\mesh}(\overline{u},u)$}
        \addplot [color=blue, thick, mark=*
        ] table [x = h, y = e5] {\mytable};
        \addlegendentry{$\delta_{\mesh}(\overline{u},\overline{u}_{\mesh})$}
        \addplot [color=black,  thin, mark=none
        ] table [x = h, y = h] {\mytable};
        \addlegendentry{order 1}
    \end{loglogaxis}
\end{tikzpicture}}
\caption{Approximation and interpolation errors vs. the size of the mesh. } 
\label{fig:error}
\end{figure}
\end{center}

\section{Conclusion and perspectives}

In the present study, we were able to show an optimal error estimate for the discretization of a Poisson equation with minimal regularity, namely  when the solution possesses no greater regularity than belonging to $H_0^1 (\Omega)$, even though the two-point flux approximation (TPFA) cannot be placed in the framework of the gradient discretization method for which such an estimate already exists \cite{DEGGH}.  
Error bounds are derived for both the solution and the approximation of the gradient component orthogonal to the mesh faces. 
Our estimates are optimal, in the sense that the approximation error is shown to be of the same order  as the sum of the interpolation error and the conformity error. 
 Furthermore, numerical experiments show the validity of our error for a solution with minimal regularity. 
The results are extended to evolution problems discretized via the implicit Euler scheme, as detailed in the accompanying appendix.

While our study sheds some light on error estimates for finite volume schemes on admissible meshes for irregular and regular right-hand sides, the superconvergence of the method for regular right-hand sides remains an open problem; indeed, even though numerical experiments demonstrate that the two point scheme  superconverges on triangular \cite{BOIVIN2000806,herbin-labergerie} and Vorono\"{\i} meshes \cite{Mishev-voronoi, voronoi-uno, voronoi-due}, the theoretical proof remains an open problem except for the case of a (rather large) class of triangular meshes \cite{droniou-nataraj} or for a double finite volume scheme known as ``DDFV'' which involves two TPFA schemes, one on the Delaunay triangulation and one on the associated Vorono\"{\i} mesh \cite{omnes-voronoi}.

\appendix

\section{The transient case}\label{sec:heat}

This appendix is focused on the study of an optimal error bound in the transient case.   
For a given $T>0$, the solution of the continuous problem is $u : \Omega \times [0,T] \to \R$ such that
\begin{align}
 & \partial_t u- \Delta u = L \label{eq:laplace-parab}\\
 & u(\cdot,t) = 0 \mbox{ on }\partial \Omega \mbox{ for } t \in [0,T],  \label{eq:cldiric-parab} \\
 & u (0)-\Phi u(T)=\xi_0, \label{eq:ci-parab}
\end{align}
where $L \in L^2(0,T;H^{-1}(\Omega))$, $\xi_0\in L^2(\Omega)$ and $\Phi~:~L^2(\Omega)\to L^2(\Omega)$ is  a linear contractive mapping, {\it i.e.} such that  $\Vert\Phi v\Vert_{L^2(\Omega)}\le \Vert v\Vert_{L^2(\Omega)}$ for all $v\in L^2(\Omega)$. 
Note that if $\Phi = 0$, the problem \eqref{eq:laplace-parab}-\eqref{eq:ci-parab} is the standard Cauchy problem, and if $\Phi =$ Id, \eqref{eq:ci-parab} becomes a periodic condition.

Identifying $L^2(\Omega)$ with its dual space  thanks to the Riesz–Fr\'echet representation theorem, we have 
\[H^1_0(\Omega) \subset L^2(\Omega)=(L^2(\Omega))' \subset (H^1_0(\Omega))'=H^{-1}(\Omega),\]
so that the space 
\begin{equation*} 
W(0,T;\Omega) = \{u\in L^2(0,T;H_0^1(\Omega)): u'\in L^2(0,T;H^{-1}(\Omega))\},
\end{equation*}
is well defined, with $u'$ the weak derivative of the fonction $t \mapsto u(\cdot,t)$.
Recall that if $u \in W(0,T;\Omega)$, then $u \in C([0,T],L^2(\Omega))$.
As in the elliptic case, we decompose  $L = f + \div \bF$ with $f\in L^2(0,T;L^2(\Omega))$ and $\bF\in L^2(0,T;L^2(\Omega)^d)$.
A weak formulation of the problem \eqref{eq:laplace-parab}-\eqref{eq:ci-parab} then reads
\begin{align*} 
&\overline{u}\in W(0,T;\Omega)\hbox{ s.t.\ } \forall \overline{v}\in L^2(0,T;H_0^1(\Omega)), \\  
&\langle\overline{u}'(t), \overline{v}(t)\rangle_{H^{-1}(\Omega), H_0^1(\Omega)} + \langle\nabla \overline{u}(t) +\bF(t), \nabla\overline{v}(t)\rangle_{L^2(\Omega)^d} = \langle f(t) , \overline{v}(t)\rangle_{L^2(\Omega)}\hbox{ for a.e. }t\in [0,T]\\  
&\hbox{ and }\overline{u}(0)-\Phi \overline{u}(T)=\xi_0,
\end{align*}
which also reads
\begin{equation}\label{eq:pbcont}
\overline{u}\in W(0,T;\Omega)\hbox{ s.t.\ } \overline{u}' - {\rm div}( \nabla \overline{u} +\bF)=f\hbox{ and }\overline{u}(0)-\Phi \overline{u}(T)=\xi_0.
\end{equation}
Recall that $-\Delta$ is a linear continuous bijective operator from $H^1_0(\Omega)$ to $H^{-1}(\Omega)$ and therefore also from $L^2(0,T; H_0^1(\Omega))$ to $L^2(0,T;H^{-1}(\Omega))$.
We denote by $R$ its inverse which is linear continuous from $L^2(0,T; H^{-1}(\Omega))$ to $L^2(0,T;H_0^1(\Omega))$.
More precisely, noting that
\[
\forall (u, v)  \in H^1_0(\Omega)^2, \langle -\Delta u, v\rangle_{H^{-1}(\Omega), H_0^1(\Omega)}=
\langle \nabla u, \nabla v\rangle_{L^2(\Omega)^d}, 
\]
and that $u=R \xi\Longleftrightarrow  \xi=-\Delta u$, the operator $R$ is characterized by
\begin{equation}
\label{eq:defR}
 \forall (\xi,v)\in H^{-1}(\Omega)\times  H_0^1(\Omega),\ \langle \nabla R\xi , \nabla v\rangle_{L^2(\Omega)^d}= \langle \xi, v\rangle_{H^{-1}(\Omega), H_0^1(\Omega)}.
\end{equation}
The problem \eqref{eq:pbcont} is then equivalent to
\begin{equation}
\label{eq:pbcontriesz}
\hbox{find }\overline{u}\in W(0,T;\Omega)\hbox{ s.t.\ } -{\rm div}(\nabla R \overline {u}' +  \nabla \overline {u}  +\bF)=f\hbox{ and }\overline {u}(0)-\Phi \overline {u}(T)=\xi_0,
\end{equation}
which in particular implies that $\nabla R \overline {u}' +  \nabla \overline {u} + \bF\in L^2(0,T;\hdiv(\Omega))$.
Let us first state a known result on the well-posedness of the problem \eqref{eq:pbcont}. 
\begin{thm}[Existence and uniqueness of $\overline {u}$]\label{thm:5.3}
 For all  $f\in L^2(0,T; L^2(\Omega))$, $\bF\in L^2(0,T;L^2(\Omega)^d)$ and $\xi_0\in L^2(\Omega)$, Problem \eqref{eq:pbcont} has a unique solution.
\end{thm}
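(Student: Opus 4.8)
The plan is to reduce Problem \eqref{eq:pbcont} to the standard Cauchy problem together with a fixed point encoding the non-local-in-time constraint $\overline{u}(0)-\Phi\overline{u}(T)=\xi_0$. First I would invoke the classical variational theory of linear parabolic equations (Lions; see also \cite{RDV2,droniou:hal-04183945}): for every $w_0\in L^2(\Omega)$ the problem
\begin{equation*}
 u\in W(0,T;\Omega),\quad u'-{\rm div}(\nabla u+\bF)=f,\quad u(0)=w_0,
\end{equation*}
has a unique solution. Since $u\in W(0,T;\Omega)\subset C([0,T],L^2(\Omega))$, the trace $u(T)$ is a well-defined element of $L^2(\Omega)$.

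By linearity, split $u=u_{\rm d}+u_{\rm h}$, where $u_{\rm d}$ solves the equation with source data $f,\bF$ and zero initial datum $u_{\rm d}(0)=0$, while $u_{\rm h}$ solves the homogeneous equation ($f=0$, $\bF=0$) with $u_{\rm h}(0)=w_0$. Set $g=u_{\rm d}(T)\in L^2(\Omega)$ (fixed by the data) and define the solution operator $\mathcal{S}:L^2(\Omega)\to L^2(\Omega)$, $\mathcal{S}w_0=u_{\rm h}(T)$, which is linear and continuous. Then $u(T)=g+\mathcal{S}w_0$, and, writing $\overline{u}(0)=w_0$, the constraint $\overline{u}(0)-\Phi\overline{u}(T)=\xi_0$ becomes the linear equation
\begin{equation*}
 (I-\Phi\,\mathcal{S})\,w_0=\xi_0+\Phi g\quad\text{in }L^2(\Omega).
\end{equation*}
Hence existence and uniqueness of $\overline{u}$ is equivalent to bounded invertibility of $I-\Phi\,\mathcal{S}$.

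The crux is to prove that $\mathcal{S}$ is a strict contraction on $L^2(\Omega)$. Testing the homogeneous equation with $u_{\rm h}(t)$ and using the integration-by-parts formula valid on $W(0,T;\Omega)$ gives the energy identity $\tfrac12\frac{d}{dt}\Vert u_{\rm h}(t)\Vert_{L^2(\Omega)}^2=-\Vert\nabla u_{\rm h}(t)\Vert_{L^2(\Omega)^d}^2$; combining it with the Poincaré inequality $\Vert\nabla u_{\rm h}\Vert_{L^2(\Omega)^d}^2\ge c_\Omega\Vert u_{\rm h}\Vert_{L^2(\Omega)}^2$ and Gronwall's lemma yields $\Vert\mathcal{S}w_0\Vert_{L^2(\Omega)}\le e^{-c_\Omega T}\Vert w_0\Vert_{L^2(\Omega)}$, so $\Vert\mathcal{S}\Vert\le e^{-c_\Omega T}<1$. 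Since $\Phi$ is contractive, $\Vert\Phi\,\mathcal{S}\Vert\le\Vert\mathcal{S}\Vert<1$, and therefore $I-\Phi\,\mathcal{S}$ is boundedly invertible via the Neumann series. This determines $w_0$ uniquely, and the associated $u$ is the unique solution of \eqref{eq:pbcont}.

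The main obstacle is precisely the non-local-in-time condition: unlike the Cauchy case $\Phi=0$, one cannot prescribe $\overline{u}(0)$ and march forward, so one must route the argument through the solution operator $\mathcal{S}$ and exploit its dissipativity. The delicate point is that $\Phi$ is only \emph{non-strictly} contractive ($\Vert\Phi\Vert\le1$), so all the strict contractivity needed to invert $I-\Phi\,\mathcal{S}$ must come from the parabolic smoothing encoded in $e^{-c_\Omega T}<1$; it is exactly the pairing of $\Vert\Phi\Vert\le1$ with $\Vert\mathcal{S}\Vert<1$ that closes the argument uniformly over all admissible $\Phi$, including the periodic case $\Phi=\mathrm{Id}$.
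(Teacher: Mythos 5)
Your proof is correct, but be aware that the paper does not actually prove Theorem \ref{thm:5.3}: it defers entirely to the literature, citing \cite{Lio61} for the Cauchy case $\Phi=0$ and \cite{RDV1} for a general contractive $\Phi$. Your argument is therefore a self-contained alternative, and it holds up: the reduction of the non-local condition $\overline{u}(0)-\Phi\overline{u}(T)=\xi_0$ to the linear equation $(I-\Phi\,\mathcal S)w_0=\xi_0+\Phi g$ is exact by linearity and by uniqueness for the Cauchy problem, and the strict bound $\Vert\mathcal S\Vert\le e^{-c_\Omega T}<1$ follows from the energy identity on $W(0,T;\Omega)$ combined with the Poincar\'e inequality (indeed $\mathcal S$ is the Dirichlet heat semigroup at time $T$, of norm $e^{-\lambda_1 T}$ with $\lambda_1>0$ the first Dirichlet eigenvalue), after which the Neumann series inverts $I-\Phi\,\mathcal S$ since $\Vert\Phi\Vert\le 1$. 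The route the paper relies on --- mirrored at the discrete level in the appendix, see Lemma \ref{lem:suffbnb} and the verification of \eqref{eq:condlim} in the proof of Theorem \ref{thm:erresttime} --- is instead an inf-sup argument for a space--time bilinear form, in which the margin $\mu-\nu\Vert\Phi\Vert^2\ge\delta$ needed to absorb the $\Phi$-term is extracted from a weighted combination of energy estimates. Your approach buys brevity and transparency for the Laplacian, whose strict coercivity on $H^1_0(\Omega)$ delivers the parabolic decay for free; the inf-sup approach buys an explicit stability constant and a structure that transfers verbatim to the implicit Euler discretization, which is precisely what the appendix needs. The one point worth flagging is that your contraction argument genuinely uses strict coercivity: for a merely non-negative elliptic part one can have $\Vert\mathcal S\Vert=1$, and then the periodic case $\Phi=\mathrm{Id}$ requires the finer argument; for Problem \eqref{eq:pbcont} as stated this is not an issue.
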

The proof ot this theorem may be found in \cite{Lio61} in the case that $\Phi = 0$ and in \cite{RDV1} for any general contractive mapping $\Phi$.
\subsection{Description of the implicit Euler scheme}\label{sub:4.1}
Let $N\in\mathbb{N}\setminus\{0\}$ and define the time step (taken to be uniform for simplicity of presentation) $k = \frac {T}{N}$. 
 For all $m\in \llbracket 1,N \rrbracket$,  $f^{(m)}\in  L^2(\Omega)$, $\bF^{(m)}\in  L^2(\Omega)^d$ are defined by
\[
 f^{(m)} = \frac 1 {k}\int_{(m-1)k}^{m k} f(t) {\rm d}t\quad\hbox{ and }\quad\bF^{(m)} = \frac 1 {k}\int_{(m-1)k}^{m k} \bF(t) {\rm d}t.
\]
The implicit Euler scheme consists in seeking $N+1$ elements of the space $\Xmesh$ given in Definition \ref{def:meshdirichlet}, denoted by $(u^{(m)})_{m=0,\ldots,N}$, such that
\begin{subequations}
\label{eq:scheme}
\begin{align}
&\mbox{\emph{Initialization:}} \; \langle  u^{(0)} - \Phi   u^{(N)} ,   v\rangle_{ L^2(\Omega)}  =  \langle \xi_0 ,   v\rangle_{ L^2(\Omega)}\mbox{ for all }v\in \Xmesh
\label{eq:schemeiniex}
\\
&\mbox{\emph{Step} } m: \;\qquad \langle \frac{u^{(m)}-u^{(m-1)}}{k},  v\rangle_{L^2(\Omega)} + d \langle G_{\mesh} u^{(m)},G_{\mesh}  v\rangle_{ L^2(\Omega)} =\langle f^{(m)}, v\rangle_{ L^2(\Omega)} \nonumber
\\  &   \hspace{4.5cm} -\langle \bF^{(m)},\nabla_{\mesh} v \rangle_{ L^2(\Omega)^d}, \quad\mbox{ for all }m\in \llbracket 1,N \rrbracket \mbox{ and }v\in \Xmesh. \label{eq:schemeimp} 
\end{align}
\end{subequations}
Note that, if $\Phi\equiv 0$, the scheme is the usual implicit scheme, and the existence and uniqueness of a solution $( u^{(0)},(u^{(m)})_{m\in \llbracket 1,N \rrbracket})$ to \eqref{eq:scheme} is standard. 
In the general case, a linear system involving $u^{(0)}$ must be solved, and its invertibility is proved by Theorem \ref{thm:errest}.
Let us now introduce the discrete functional space $W_\mesh$, which consists of all functions $w~:~[0,T]\to \Xmesh$ in the sense that there exist $N+1$ elements of $\Xmesh$, denoted by $(w^{(m)})_{m=0,\ldots,N}$, such that
 \begin{equation}\label{eq:schememtheta}
 \begin{aligned}
   w(0)={}&w^{(0)},\mbox{ and }\\ 
   w(t) ={}&  w^{(m)}\hbox{ for all }t\in ((m-1)k, mk],\hbox{ for all }m\in \llbracket 1,N \rrbracket. 
 \end{aligned}
 \end{equation}
Note that such a function $w\in W_\mesh$ is a function of time with values in a (discrete) functional space; however, observe that  there is a bijection from the space $W_\mesh$ to the space $\Xmesh^{N+1}$, through the mapping $w\mapsto (w(mk))_{m\in \llbracket 0,N \rrbracket}$, so that $w$ may also be seen as a function of space and time, defined, with an abuse of notation, by
$w: (x,t) \in \Omega \times[0,T]  \mapsto w(x,t) = w^{(0)}(x) $ if $t=0$ and $w(x,t) = w^{(m)}(x)$ if $t\in ((m-1)k, mk]$. 
Therefore, to avoid any confusion, we denote the  discrete time derivative of $w$ by $\eth_t  w$, defined as follows:
 \begin{equation}\label{eq:defqnevolstrex}
   \eth_t w(t) = \frac{w^{(m)}-w^{(m-1)}}{k},\qquad\text{ for a.e. } t\in ((m-1)k, mk),\hbox{ for all }m\in\llbracket 1,N \rrbracket,
 \end{equation}
and its discrete gradient $G_{\mesh} v : [0,T] \to X_\mesh$ by 
\[
 G_{\mesh} w (x,t) = \begin{cases}
                G_{\mesh} w^{(0)}(x) \mbox{ if } t=0, \\
                G_{\mesh} w^{(m)}(x) \mbox{ if } t\in ((m-1)k, mk],\hbox{ for } m\in \llbracket 1,N \rrbracket,
               \end{cases}
\]
where $G_{\mesh}:  \Xmesh\to L^2(\Omega)$ be the discrete normal gradient defined by \eqref{eq:defgm}.
 We also need to define the space $V_\mesh$ of all functions $v\in L^2(0,T; \Xmesh)$ for which there exist $N$ elements of $\Xmesh$, denoted by $(v^{(m)})_{m\in \llbracket 1,N \rrbracket}$, such that
 \begin{equation}\label{eq:defVdisc}
   v(t) =  v^{(m)}\hbox{ for a.e. }t\in ((m-1)k, mk),\hbox{ for all }m\in \llbracket 1,N \rrbracket. 
 \end{equation}
 Observe that there is now a bijection from the space $V_\mesh$ to $\Xmesh^{N}$, through the mapping $w\mapsto (w(mk))_{m\in \llbracket 1,N \rrbracket}$.
 \begin{remark}[Difference between $W_\mesh$ and $V_\mesh$]
The functions belonging to $W_\mesh$ are defined \emph{pointwise and everywhere}, whereas those belonging to $V_\mesh$ are only defined \emph{almost everywhere} on $(0,T)$.
  \end{remark}
The scheme \eqref{eq:schemeiniex}--\eqref{eq:schemeimp} can then be written under the following form (for the sake of brevity, here and in the sequel, we omit $0,T$ in the expression of some spaces of time dependent functions: for example, we write $L^2( L^2(\Omega))$ instead of $L^2(0,T; L^2(\Omega))$): 
\begin{subequations}
\label{eq:schemevar}
 \begin{align}
 \label{eq:schemeformeb}
&\hbox{Find }u\in W_\mesh\hbox{ such that }\forall (v,z)\in V_\mesh\times \Xmesh, \quad b(u, (v,z)) = \mathcal L((v,z)), \mbox{ with }\\
& b(u, (v,z)) = \langle  \eth_t u,  v\rangle_{ L^2( L^2(\Omega)) } + d \langle G_{\mesh} u,G_{\mesh}  v\rangle_{L^2( L^2(\Omega))}+\langle  u(0) - \Phi   u(T) ,   z\rangle_{ L^2(\Omega)}  ,\label{eq:defb}\\
&\mathcal L((v,z)) =  \langle f,  v \rangle_{L^2( L^2(\Omega))} -\langle \bF,\nabla_{\mesh}  v  \rangle_{L^2( L^2(\Omega)^d)}+  \langle \xi_0 ,   z\rangle_{ L^2(\Omega)}. \nonumber
 \end{align}
\end{subequations}

\subsection{An optimal error estimate}
\label{sec:error_estimate}

Let $R_\mesh: v \in \Xmesh\mapsto R_\mesh v\in\Xmesh$, with $R_\mesh v$ defined by 
\begin{equation}\label{eq:defRdisc}
d \langle G_{\mesh} R_\mesh v, G_{\mesh} w\rangle_{ L^2(\Omega)^d} = \langle  v,   w\rangle_{ L^2(\Omega)}\quad\mbox{ for all }w\in \Xmesh.
\end{equation}
With this definition, the scheme \eqref{eq:schemevar} can be recast as: for all $(v,z)\in V_\mesh\times \Xmesh$,
\begin{multline}\label{eq:schemevar.recast}
 d\langle G_{\mesh} R_\mesh\eth_t u+ G_{\mesh} u, G_{\mesh} v\rangle_{ L^2( L^2(\Omega)^d) } +\langle \bF,\nabla_{\mesh}  v  \rangle_{L^2( L^2(\Omega)^d)}
\\ +\langle  u(0) - \Phi   u(T) ,   z\rangle_{ L^2(\Omega)}
=
\langle f,  v \rangle_{L^2( L^2(\Omega))} +  \langle \xi_0 ,   z\rangle_{ L^2(\Omega)}.
\end{multline}
We define a distance between any functions $\varphi\in W(0,T;\Omega)$ and  $v\in W_\mesh$ by
\begin{equation}\label{eq:def.Suv}
\begin{aligned}
 \delta^{(T)}_\mesh(\varphi,v) ={}& d\Vert\gnc_{\mesh}R \varphi' - G_{\mesh} R_\mesh\eth_t v\Vert_{L^2( L^2(\Omega))}\\
 &+d\Vert \gnc_{\mesh} \varphi - G_{\mesh}  v\Vert_{L^2( L^2(\Omega))}+ \max_{t\in[0,T]}\Vert \varphi(t) -   v(t)\Vert_{ L^2(\Omega)}.
\end{aligned}
\end{equation}
(Recall that $\gnc_{\mesh}$ is the mean normal gradient defined by \eqref{eq:defogm}.)
The interpolation error for a given function $\varphi \in W(0,T;\Omega)$ is then defined by
\begin{equation}\label{eqdef:interp-time}
 \mathcal I^{(T)}_\mesh(\varphi) =   \inf_{v\in W_\mesh} \delta^{(T)}_\mesh(\varphi,v)
\end{equation}
We also define the time-space conformity error
 $\zeta^{(T)}_{\mesh}:L^2(0,T;\hdiv(\Omega)) \to [0,+\infty)$  by
\be
\forall\bvarphi\in L^2(0,T;\hdiv(\Omega)),\ \zeta^{(T)}_{\mesh}(\bvarphi) =  \sup_{w\in V_{\mesh}\setminus\{0\}}\frac{\dsp \left|\langle \bvarphi,\nabla_{\mesh} w\rangle_{L^2( L^2(\Omega)^d)} + \langle {\rm div}\bvarphi, w\rangle_{L^2( L^2(\Omega))} \right|}{\Vert G_{\mesh} w \Vert_{L^2( L^2(\Omega)^d)}}.
\label{abs:defwdiscV}
\ee
Before proving the optimal error result, we need the following preliminary lemma.
\begin{lem}\label{lem:hypsufbnb}
For $w\in \Xmesh$, the following inequalities hold
\begin{multline}\label{eq:majunif}
 \max_{t\in[0,T]} \Vert w(t)\Vert_{L^2(\Omega)} 
 \le  \Vert d\, G_{\mesh} R_\mesh\eth_t w\Vert_{L^2( L^2(\Omega))}+\Vert d\, G_{\mesh}  w\Vert_{L^2( L^2(\Omega))}+ \Vert w(0)\Vert_{ L^2(\Omega)},
\end{multline} 
\begin{equation}\label{eq:hypsufbnb.1}
\langle  G_{\mesh} R_\mesh\eth_t w,  d\, G_{\mesh} w\rangle_{L^2( L^2(\Omega)) }
\ge \frac12 \Vert   w(T)\Vert_{L^2(\Omega)}^2 - \frac12\Vert  w(0)\Vert_{L^2(\Omega)}^2,
\end{equation}
and, recalling that ${\rm diam}(\Omega)$ is defined by \eqref{eq:poindis},
\begin{equation}\label{eq:hypsufbnb.2}
\Vert  d\, G_{\mesh} R_\mesh\eth_t w\Vert_{L^2( L^2(\Omega))}^2 + \left(1+\frac{{\rm diam}(\Omega)^2}{T}\right) \Vert d\, G_{\mesh} w\Vert_{L^2( L^2(\Omega))}^2
\ge \Vert   w(T)\Vert_{L^2(\Omega)}^2.
\end{equation}
\end{lem}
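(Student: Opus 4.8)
The plan is to reduce everything to the time-slab representation $w=(w^{(m)})_{m=0,\dots,N}$ of $w\in W_\mesh$ and to exploit the single algebraic engine provided by the definition \eqref{eq:defRdisc} of $R_\mesh$. Indeed, on each slab $((m-1)k,mk)$ one has $\eth_t w=\frac{w^{(m)}-w^{(m-1)}}{k}$ and $w=w^{(m)}$, so using \eqref{eq:defRdisc} with $v=\eth_t w$ and test function $w^{(m)}$ turns the slab contribution of $\langle G_{\mesh}R_\mesh\eth_t w,\,d\,G_{\mesh}w\rangle_{L^2(L^2(\Omega))}$ into $\langle w^{(m)}-w^{(m-1)},w^{(m)}\rangle_{L^2(\Omega)}$. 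Combined with the elementary identity $\langle a-b,a\rangle=\tfrac12\Vert a\Vert^2-\tfrac12\Vert b\Vert^2+\tfrac12\Vert a-b\Vert^2$, summation over $m$ yields the discrete energy balance
\[
\langle G_{\mesh}R_\mesh\eth_t w,\,d\,G_{\mesh}w\rangle_{L^2(L^2(\Omega))}=\tfrac12\Vert w(T)\Vert_{L^2(\Omega)}^2-\tfrac12\Vert w(0)\Vert_{L^2(\Omega)}^2+\tfrac12\sum_{m=1}^N\Vert w^{(m)}-w^{(m-1)}\Vert_{L^2(\Omega)}^2 .
\]
Discarding the nonnegative last sum gives \eqref{eq:hypsufbnb.1} immediately.

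For \eqref{eq:majunif}, I would run the same telescoping but only up to an arbitrary index $j$: dropping the jump term gives $\tfrac12\Vert w^{(j)}\Vert^2\le\tfrac12\Vert w(0)\Vert^2+\langle G_{\mesh}R_\mesh\eth_t w,\,d\,G_{\mesh}w\rangle_{L^2(0,jk;L^2(\Omega))}$. Bounding the cross term by Cauchy--Schwarz on $(0,jk)\subset(0,T)$, using $\Vert G_{\mesh}R_\mesh\eth_t w\Vert=\tfrac1d\Vert d\,G_{\mesh}R_\mesh\eth_t w\Vert$ with $\tfrac1d\le\tfrac12$, and then $pq\le\tfrac12(p^2+q^2)$, I obtain $\Vert w^{(j)}\Vert^2\le\big(\Vert w(0)\Vert+\Vert d\,G_{\mesh}R_\mesh\eth_t w\Vert_{L^2(L^2(\Omega))}+\Vert d\,G_{\mesh}w\Vert_{L^2(L^2(\Omega))}\big)^2$; taking the maximum over $j\in\{0,\dots,N\}$ (the case $j=0$ being trivial) is exactly \eqref{eq:majunif}.

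The genuinely delicate estimate is \eqref{eq:hypsufbnb.2}, where the extra factor ${\rm diam}(\Omega)^2/T$ must be produced without leaving a stray $\Vert w(0)\Vert$ on the right. Here I would telescope from a cleverly chosen index $j^\ast$ up to $N$: for every $j$ one gets $\tfrac12\Vert w(T)\Vert^2\le\tfrac12\Vert w^{(j)}\Vert^2+\langle G_{\mesh}R_\mesh\eth_t w,\,d\,G_{\mesh}w\rangle_{L^2(L^2(\Omega))}$, and I choose $j^\ast$ minimizing $\Vert w^{(j)}\Vert$ over $1\le j\le N$, so that $\Vert w^{(j^\ast)}\Vert^2\le\frac1N\sum_{j=1}^N\Vert w^{(j)}\Vert^2=\frac1T\Vert w\Vert_{L^2(L^2(\Omega))}^2$. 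The discrete Poincaré inequality \eqref{eq:poindis}, applied slab by slab in the form $\Vert w^{(j)}\Vert\le\frac{{\rm diam}(\Omega)}{\sqrt d}\Vert d\,G_{\mesh}w^{(j)}\Vert$, converts this into $\frac{{\rm diam}(\Omega)^2}{dT}\Vert d\,G_{\mesh}w\Vert_{L^2(L^2(\Omega))}^2$. Finally I bound the cross term by Young's inequality in the asymmetric form $\frac2d AB\le\frac1{d^2}A^2+B^2$ with $A=\Vert d\,G_{\mesh}R_\mesh\eth_t w\Vert$ and $B=\Vert d\,G_{\mesh}w\Vert$, and invoke $d\ge2$ to absorb the constants via $\frac1{d^2}\le1$ and $\frac1{dT}\le\frac1T$, which collects exactly into \eqref{eq:hypsufbnb.2}. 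The main obstacle is precisely this last estimate: the selection of $j^\ast$ together with the averaging/Poincaré step is what legitimately replaces the uncontrolled initial value $\Vert w(0)\Vert$ by the time-integrated gradient norm, and making all the $d$-dependent constants land inside the stated factor $\big(1+{\rm diam}(\Omega)^2/T\big)$ is the one place where care is needed.
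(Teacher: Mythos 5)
Your proof is correct and follows essentially the same route as the paper's: the discrete energy identity obtained by testing \eqref{eq:defRdisc} slab by slab, telescoping with $\langle a-b,a\rangle=\tfrac12\|a\|^2-\tfrac12\|b\|^2+\tfrac12\|a-b\|^2$, then Cauchy--Schwarz, Young and the discrete Poincar\'e inequality \eqref{eq:poindis}. Your selection of the minimizing index $j^\ast$ with the bound $\Vert w^{(j^\ast)}\Vert^2\le \frac1T\Vert w\Vert_{L^2(L^2(\Omega))}^2$ is just a repackaging of the paper's step of summing the inequality over $m=1,\dots,N$ and dividing by $T$, so the two arguments coincide up to presentation (and up to immaterial $d$-dependent constants in the Young inequalities).
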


\begin{proof}
Let $w\in \Xmesh$; thanks to the equality $(a-b)a=\frac12 a^2+\frac12 (a-b)^2-\frac12 b^2$, the definition \eqref{eq:defRdisc} of $R_\mesh$ yields, for $0\le m \le n\le N$,
\begin{align}
 \int_{mk}^{nk} \langle  G_{\mesh} R_\mesh\eth_t w(t),{}&  d\, G_{\mesh} w(t)\rangle_{ L^2(\Omega)}{\rm d}t =
 \int_{mk}^{nk} \langle \eth_t w(t),   w(t)\rangle_{ L^2(\Omega) }{\rm d}t \nonumber\\
 ={}& \sum_{p=m}^{n-1} k \langle  \frac {w^{(p+1)} - w^{(p)}} k,  w^{(p+1)}\rangle_{L^2(\Omega)} \nonumber\\
 ={}& \frac 1 2 \Vert   w^{(n)}\Vert_{L^2(\Omega)}^2 +\frac 1 2\sum_{p=m}^{n-1} \Vert   (w^{(p+1)} - w^{(p)})\Vert_{L^2(\Omega)}^2 -\frac 1 2 \Vert   w^{(m)}\Vert_{L^2(\Omega)}^2.
\label{eq:ineqestimthetaim}
\end{align}
Applying the Cauchy--Schwarz inequality to the left-hand side, we get
\begin{align}
 \frac 1 2 {}&\Vert  w^{(n)}\Vert_{L^2(\Omega)}^2  \le{}   \Vert  d\, G_{\mesh} R_\mesh\eth_t w\Vert_{L^2( L^2(\Omega)^d)}\Vert d\, G_{\mesh} w\Vert_{L^2( L^2(\Omega)^d)}+ \frac 1 2 \Vert  w^{(m)}\Vert_{L^2(\Omega)}^2\nonumber\\
 &\le\frac12\Vert  d\, G_{\mesh} R_\mesh\eth_t w\Vert_{L^2( L^2(\Omega)^d)}^2+\frac12\Vert d\, G_{\mesh} w\Vert_{L^2( L^2(\Omega)^d)}^2+ \frac 1 2 \Vert  w^{(m)}\Vert_{L^2(\Omega)}^2,
 \label{eq:est.m.mp}
 \end{align}
where the second line follows from the Young inequality. 
Setting $m=0$ allows us to take any $n=0,\ldots,N$. 
Taking the square root of the above inequality and using $(a^2+b^2+c^2)^{\nicefrac12}\le a+b+c$ then concludes the proof of \eqref{eq:majunif}.

The inequality \eqref{eq:hypsufbnb.1} is obtained letting $m=0$ and $n=N$ in \eqref{eq:ineqestimthetaim}. To prove \eqref{eq:hypsufbnb.2}, we come back to \eqref{eq:est.m.mp} and set $n=N$ to get, after multiplication by $2k$, for all $m=0,\ldots,N$,
\[
 k \Vert   w(T)\Vert_{L^2(\Omega)}^2  \le   k   \Vert  d\, G_{\mesh} R_\mesh\eth_t w\Vert_{L^2( L^2(\Omega)^d)}^2+k\Vert d\, G_{\mesh} w\Vert_{L^2( L^2(\Omega)^d)}^2+ k \Vert  w^{(m)}\Vert_{L^2(\Omega)}^2.
\]
Summing over $m\in \llbracket 1,N \rrbracket$ yields
\begin{align*}
  T \Vert  w(T)\Vert_{L^2(\Omega)}^2  \le{}& T  \Vert  d\, G_{\mesh} R_\mesh\eth_t w\Vert_{L^2( L^2(\Omega)^d)}^2+T\Vert d\, G_{\mesh} w\Vert_{L^2( L^2(\Omega)^d)}^2 + \Vert  w\Vert_{L^2( L^2(\Omega))}^2\\
  \le{}&T  \Vert  d\, G_{\mesh} R_\mesh\eth_t w\Vert_{L^2( L^2(\Omega)^d)}^2+(T+{\rm diam}(\Omega)^2)\Vert d\, G_{\mesh} w\Vert_{L^2( L^2(\Omega)^d)}^2,
\end{align*}
which proves \eqref{eq:hypsufbnb.2}.
\end{proof}

We now turn to the error estimate result.

\begin{thm}\label{thm:erresttime}
 There exists one and only one solution $u$ to \eqref{eq:scheme}. 
 Moreover, letting $\overline {u}$ be the solution to \eqref{eq:pbcont} and $\bv$ be defined by
\begin{equation}\label{eq:defbv}
 \bv := \nabla R \overline {u}' +  \nabla \overline {u}  +\bF\in L^2(0,T;\hdiv(\Omega)),
\end{equation}
 there exists $C_{\Omega,T}\ge 0$, depending only on $\Omega$ and $T$, such that:
\begin{multline}\label{eq:errestgd}
\frac 1 2\Big[ \zeta^{(T)}_\mesh(\bv)+ \mathcal I^{(T)}_\mesh(\overline {u},v)\Big] \le \delta^{(T)}_\mesh(\overline {u},u) 
\le  C_{\Omega,T} \Big[ \zeta^{(T)}_\mesh(\bv) + \mathcal I^{(T)}_\mesh(\overline {u},v)\Big].
\end{multline}
\end{thm}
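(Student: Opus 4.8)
The plan is to read the scheme \eqref{eq:schemevar} as an abstract Banach--Ne\v{c}as--Babu\v{s}ka (BNB) problem, following \cite{RDV2,droniou:hal-04183945}: the trial space is $W_\mesh$, the test space is $V_\mesh\times\Xmesh$, and the pairing is the bilinear form $b$ of \eqref{eq:defb}, conveniently rewritten through $R_\mesh$ as in \eqref{eq:schemevar.recast}. Since $\dim W_\mesh=\dim(V_\mesh\times\Xmesh)=(N+1)\dim\Xmesh$, the linear operator induced by $b$ is bijective --- hence the scheme \eqref{eq:scheme} has one and only one solution --- as soon as it is inf-sup stable. This stability is exactly what Lemma \ref{lem:hypsufbnb} supplies: inequality \eqref{eq:majunif} plays the role of the discrete Poincar\'e inequality \eqref{eq:poindis} (it controls the max-in-time term of $\delta^{(T)}_\mesh$ by the two gradient terms and $\Vert u(0)\Vert$), while \eqref{eq:hypsufbnb.1}--\eqref{eq:hypsufbnb.2} are the discrete energy identities that let one absorb the nonlocal initial condition $u(0)-\Phi u(T)=\xi_0$. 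I expect the verification of the inf-sup condition to be the main obstacle: for a given $u\in W_\mesh$ one must exhibit a test pair $(v,z)$ --- built from the $V_\mesh$-part of $u$ together with a $z$ proportional to $u(0)$ --- for which $b(u,(v,z))$ dominates the full norm $d\Vert G_\mesh R_\mesh\eth_t u\Vert+d\Vert G_\mesh u\Vert+\max_t\Vert u(t)\Vert$, and it is here that the contractivity $\Vert\Phi\Vert\le 1$ must be used to close the estimate.

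For the left (lower) inequality I would argue as in the elliptic Theorem \ref{thm:errest}. For any $w\in V_\mesh$, combining the space--time form of the identity \eqref{eq:gradgradM-GMGM} (so that $\langle\nabla R\overline{u}'+\nabla\overline{u},\nabla_\mesh w\rangle=d\langle\gnc_\mesh R\overline{u}'+\gnc_\mesh\overline{u},G_\mesh w\rangle$), the relation $-\div\bv=f$, and the recast scheme \eqref{eq:schemevar.recast} tested against $(w,0)$, one gets the consistency identity
\[
\langle\bv,\nabla_\mesh w\rangle+\langle\div\bv,w\rangle=d\big\langle(\gnc_\mesh R\overline{u}'-G_\mesh R_\mesh\eth_t u)+(\gnc_\mesh\overline{u}-G_\mesh u),\,G_\mesh w\big\rangle.
\]
Cauchy--Schwarz and the definition \eqref{abs:defwdiscV} then give $\zeta^{(T)}_\mesh(\bv)\le d\Vert\gnc_\mesh R\overline{u}'-G_\mesh R_\mesh\eth_t u\Vert+d\Vert\gnc_\mesh\overline{u}-G_\mesh u\Vert\le\delta^{(T)}_\mesh(\overline{u},u)$, which together with the trivial bound $\mathcal I^{(T)}_\mesh(\overline{u})\le\delta^{(T)}_\mesh(\overline{u},u)$ yields the left-hand inequality of \eqref{eq:errestgd}.

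For the right (upper) inequality I would run the BNB / second-Strang argument. Fix any $\tilde v\in W_\mesh$ and set $e=u-\tilde v$; for every $(w,z)\in V_\mesh\times\Xmesh$ the Galerkin relation $b(e,(w,z))=\mathcal L((w,z))-b(\tilde v,(w,z))$ holds. Substituting $f=-\div\bv$, $\xi_0=\overline{u}(0)-\Phi\overline{u}(T)$, $\bF=\bv-\nabla R\overline{u}'-\nabla\overline{u}$ and once more \eqref{eq:gradgradM-GMGM}, the right-hand side splits into: a consistency term $-(\langle\bv,\nabla_\mesh w\rangle+\langle\div\bv,w\rangle)$, controlled by $\zeta^{(T)}_\mesh(\bv)\Vert G_\mesh w\Vert$; an interpolation term bounded by a multiple of $\delta^{(T)}_\mesh(\overline{u},\tilde v)\Vert G_\mesh w\Vert$; and a boundary term $\langle(\overline{u}-\tilde v)(0)-\Phi(\overline{u}-\tilde v)(T),z\rangle$, bounded via $\Vert\Phi\Vert\le1$ by $2\delta^{(T)}_\mesh(\overline{u},\tilde v)\Vert z\Vert$. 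The inf-sup stability established above then controls the three-term norm of $e$ by $C_{\Omega,T}\big(\zeta^{(T)}_\mesh(\bv)+\delta^{(T)}_\mesh(\overline{u},\tilde v)\big)$, the max-in-time part being recovered from \eqref{eq:majunif}. A triangle inequality on each of the three pieces of $\delta^{(T)}_\mesh$ gives $\delta^{(T)}_\mesh(\overline{u},u)\le\delta^{(T)}_\mesh(\overline{u},\tilde v)+C_{\Omega,T}\big(\zeta^{(T)}_\mesh(\bv)+\delta^{(T)}_\mesh(\overline{u},\tilde v)\big)$, and taking the infimum over $\tilde v\in W_\mesh$ replaces $\delta^{(T)}_\mesh(\overline{u},\tilde v)$ by $\mathcal I^{(T)}_\mesh(\overline{u})$, which is the right-hand inequality of \eqref{eq:errestgd}.
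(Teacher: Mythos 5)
Your plan is correct and follows essentially the same route as the paper: the lower bound via the consistency identity and Cauchy--Schwarz, the upper bound via a second-Strang/inf-sup argument on the error $u-\widetilde v$ using the energy inequalities of Lemma \ref{lem:hypsufbnb} and the contractivity of $\Phi$, the max-in-time term recovered from \eqref{eq:majunif}, and invertibility of the square linear system deduced from the stability estimate. The one step you flag as the ``main obstacle'' --- actually establishing the inf-sup stability of the coupled form on $V_\mesh\times\Xmesh$ --- is precisely what the paper closes by invoking the abstract Lemma \ref{lem:suffbnb} (cited from \cite{DEGGH}) with $A=\mathrm{Id}$, whose hypothesis \eqref{eq:condlim} is verified by combining \eqref{eq:hypsufbnb.1} and \eqref{eq:hypsufbnb.2}.
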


\begin{remark}[Optimality of the error estimate \eqref{eq:errestgd}]
As in the steady case, the second inequality in \eqref{eq:errestgd} gives an error estimate for the scheme, while the first inequality shows its optimality. 
\end{remark}

\begin{proof}[Proof of Theorem \ref{thm:erresttime}.]
Let $v\in V_{\mesh}$ and $z\in \Xmesh$ be given. Definition \eqref{abs:defwdiscV} of $\zeta^{(T)}_{\mesh}(\bv)$ gives
\[
 \int_0^T \Big(\langle \bv(t),\nabla_{\mesh}  v (t)\rangle_{ L^2(\Omega)^d} + \langle {\rm div}\bv(t), v(t)\rangle_{ L^2(\Omega)}
\Big){\rm d}t
\le  \zeta^{(T)}_{\mesh}(\bv) \sqrt{d}\Vert G_{\mesh} v\Vert_{L^2( L^2(\Omega))}.
\]
This yields, using \eqref{eq:pbcontriesz} (which reads ${\rm div} \bv=-f$), and \eqref{eq:schemevar.recast},
\begin{multline*}
\int_0^T d\Big(\langle  \gnc_{\mesh} R \overline {u}'(t) + \gnc_{\mesh} \overline {u}(t)  -  ( G_{\mesh} R_{\mesh}\eth_t u(t) +  G_{\mesh} u(t)) ,G_{\mesh} v(t)\rangle_{ L^2(\Omega)} \Big){\rm d}t\\
+\langle\xi_0 - ( u(0) - \Phi   u(T)) ,   z\rangle_{ L^2(\Omega)}
\le  \zeta^{(T)}_{\mesh}(\bv) \sqrt{d}\Vert G_{\mesh} v\Vert_{L^2( L^2(\Omega))}.
\end{multline*}
Using $\overline {u}(0) - \Phi \overline {u}(T) = \xi_0$, we get
\begin{multline}
\int_0^T d\Big(\langle  \gnc_{\mesh} R \overline {u}'(t) + \gnc_{\mesh}  \overline {u}(t)  - ( G_{\mesh} R_{\mesh}\eth_t u(t) +  G_{\mesh} u(t)) ,G_{\mesh} v(t)\rangle_{ L^2(\Omega)} 
\Big){\rm d}t
\\
 +\langle \overline {u}(0) - \Phi \overline {u}(T) - ( u(0) - \Phi   u(T)) ,   z\rangle_{ L^2(\Omega)}
\\
\le  \zeta^{(T)}_{\mesh}(\bv) \sqrt{d}\Vert G_{\mesh} v\Vert_{L^2( L^2(\Omega))}.
\label{eq:error.est.1}
\end{multline}
We then take an arbitrary element $\widetilde{v}\in W_{\mesh}$ and notice that, by definition \eqref{eq:def.Suv} of $\delta^{(T)}_{\mesh}(\overline {u},\widetilde{v})$ and since $\Phi$ is a contraction, 
\begin{multline*}
\int_0^T d \langle [G_{\mesh} R_{\mesh}\partial \widetilde{v}-\gnc_{\mesh} R \overline {u}'](t),G_{\mesh} v(t)\rangle_{ L^2(\Omega)} + \langle [G_{\mesh} \widetilde{v}-\gnc_{\mesh} \overline {u}](t) ,G_{\mesh} v(t)\rangle_{ L^2(\Omega)} 
{\rm d}t  \\
+\langle \widetilde{v}(0)-\overline {u}(0) - \Phi (  \widetilde{v}(T)-\overline {u}(T)) ,   z\rangle_{ L^2(\Omega)} \\
\le \delta^{(T)}_{\mesh}(\overline {u},\widetilde{v})\Vert G_{\mesh} v\Vert_{L^2( L^2(\Omega))} +   2\delta^{(T)}_{\mesh}(\overline {u},\widetilde{v})\Vert  z\Vert_{ L^2(\Omega)}.
\end{multline*}
Adding this inequality to \eqref{eq:error.est.1} yields
\begin{multline}\label{eq:transient1}
\int_0^T d\langle G_{\mesh} R_{\mesh}\partial (\widetilde{v} - u)(t) + G_{\mesh} (\widetilde{v}(t) - u(t)) ,G_{\mesh} v(t)\rangle_{ L^2(\Omega)} 
{\rm d}t  \\
\langle (\widetilde{v}(0)  -  u(0)) - \Phi   (\widetilde{v}(T) -u(T)) ,   z\rangle_{ L^2(\Omega)} \\
\le    
(\delta^{(T)}_{\mesh}(\overline {u},\widetilde{v})+\zeta^{(T)}_{\mesh}(\bv))\Vert G_{\mesh} v\Vert_{L^2( L^2(\Omega)^d)} +   2\delta^{(T)}_{\mesh}(\overline {u},\widetilde{v})\Vert  z\Vert_{ L^2(\Omega)}.
\end{multline}

Let us now introduce the notations used in Lemma \ref{lem:suffbnb}, and prove that the hypotheses of the lemma hold. 
We denote by $V=L^2(0,T; L^2(\Omega))$ the Hilbert space endowed with the inner product $d\langle{\cdot},{\cdot}\rangle_{L^2( L^2(\Omega))}$, and by $L= L^2(\Omega)$. Let $Z$ and $Y$ be the Hilbert spaces defined by $Z = {V}\times {V}\times  L \times  L $ and $Y = {V}\times  L $. 
We define the bilinear form $\widehat{b}:Z\times Y\to \mathbb{R}$ by
\begin{equation}\label{eq:defblemmevf} 
 \widehat{b}((z_1,z_2,z_3,z_4),(y_1,y_2)) = d\langle z_1 + A z_2,y_1\rangle_{{V}} + \langle z_3 - \Phi z_4,y_2\rangle_{ L },
 \end{equation}
where $A={\rm Id}$.  
We then define the subspace $X\subset Z$ by
\[
X=G_{\mesh}(V_{\mesh})\times G_{\mesh}(V_{\mesh})\times \Xmesh\times \Xmesh.
\]
The operator $A={\rm Id}$ satisfies \eqref{eq:Malpha.operator} with $\alpha = M = 1$. Condition \eqref{eq:hypxunxdeux} is satisfied since $X_1 = X_2 = G_{\mesh}(V_{\mesh})$.
Let us prove that we can find $\zeta>0$ and $\delta >0$ such that \eqref{eq:condlim} holds.
Applying Lemma \ref{lem:hypsufbnb}, we add \eqref{eq:hypsufbnb.1} to $(1+\frac{{\rm diam}(\Omega)^2}{T})^{-1}\frac{1}{12}\times\eqref{eq:hypsufbnb.2}$. This shows that the hypothesis \eqref{eq:condlim} is satisfied with
\[
\zeta := \mu=\frac12+\left(1+\frac{{\rm diam}(\Omega)^2}{T}\right)^{-1}\frac{1}{12}\quad\mbox{ and }\quad
\nu=\frac12.
\]
We note that $\mu-\nu\|\Phi\|^2\ge \mu-\nu=\left(1+\frac{{\rm diam}(\Omega)^2}{T}\right)^{-1}\frac{1}{12}=:\delta$.

 We can therefore apply Lemma \ref{lem:suffbnb}, which yields the existence of $\widehat{\beta}>0$ depending only on ${\rm diam}(\Omega)$ and $T$ such that
 \begin{multline}\label{eq:resultsuffbnb}
  \sup_{(y_1,y_2)\in X_2\times X_3, \Vert (y_1,y_2)\Vert_Y = 1} \widehat{b}((z_1,z_2,z_3,z_4),(y_1,y_2))
  \ge  \widehat{\beta} \Vert (z_1,z_2,z_3,z_4)\Vert_{Z} \\
  \forall (z_1,z_2,z_3,z_4)\in X.
 \end{multline}
 We now remark that Inequality \eqref{eq:transient1} can be expressed by
\begin{equation}\label{eq:est.key.1}
 \widehat{b}((z_1,z_2,z_3,z_4),(y_1,y_2)) \le \widehat{c}_1 \Vert y_1\Vert_{V} + \widehat{c}_2 \Vert y_2\Vert_{ L^2(\Omega)},
\end{equation}
with
\begin{alignat*}{3}
z_1 ={}& G_{\mesh} R_{\mesh}\partial (\widetilde{v} - u), &\qquad z_2 ={}& G_{\mesh} (\widetilde{v} - u) , \\
z_3 ={}& (\widetilde{v}(0)  -  u(0)), &\qquad z_4 ={}&   (\widetilde{v}(T) -u(T)),\\
y_1 ={}& G_{\mesh} v,&\qquad y_2 ={}&  z,\\
\widehat{c}_1 ={}& \delta^{(T)}_{\mesh}(\overline {u},\widetilde{v})+\zeta^{(T)}_{\mesh}(\bv),&\qquad
\widehat{c}_2 ={}& 2\delta^{(T)}_{\mesh}(\overline {u},\widetilde{v}).
\end{alignat*}

Taking in the right-hand-side of \eqref{eq:est.key.1} the maximum over all $(y_1,y_2)\in G_{\mesh}(V_{\mesh})\times \Xmesh$ with norm in $V\times  L^2(\Omega)$ equal to 1  and using \eqref{eq:resultsuffbnb}, we deduce that
\begin{multline}\label{eq:infsup}
 \widehat{\beta}\Big( \Vert G_{\mesh} R_{\mesh}\partial (\widetilde{v}  -  u)\Vert_{L^2( L^2(\Omega)^d)}^2 +\Vert G_{\mesh}  (\widetilde{v}  -  u)\Vert_{L^2( L^2(\Omega)^d)}^2\\ 
 + \Vert (\widetilde{v}  -  u)(0)\Vert_{L^2(\Omega)}^2+ \Vert (\widetilde{v}  -  u)(T)\Vert_{L^2(\Omega)}^2\Big)^{\nicefrac12}\\
 \le \left[ \left(\delta^{(T)}_{\mesh}(\overline {u},\widetilde{v})+\zeta^{(T)}_{\mesh}(\bv) \right)^2 +  4\delta^{(T)}_{\mesh}(\overline {u},\widetilde{v})^2\right]^{\nicefrac12}\\
 \le \delta^{(T)}_{\mesh}(\overline {u},\widetilde{v})+\zeta^{(T)}_{\mesh}(\bv)  +  2\delta^{(T)}_{\mesh}(\overline {u},\widetilde{v}),
\end{multline}
where we use $(a^2+b^2)^{\nicefrac12}\le a+b$ for positive $a$ and $b$ in the last inequality.
Plugging this into \eqref{eq:infsup} and using \eqref{eq:majunif} in Lemma \ref{lem:hypsufbnb} together with $a+b+c\le \sqrt{3}(a^2+b^2+c^2)^{\nicefrac12}$, we infer
\begin{multline*}
 \widehat{\beta}\Big( \Vert G_{\mesh} R_{\mesh}\partial (\widetilde{v}  -  u)\Vert_{L^2( L^2(\Omega)^d)} +\Vert G_{\mesh}  (\widetilde{v}  -  u)\Vert_{L^2( L^2(\Omega)^d)}
  + \max_{t\in[0,T]} \Vert (\widetilde{v}  -  u)(t)\Vert_{L^2(\Omega)}\Big)\\
 \le \sqrt{3}\left( 3\delta^{(T)}_{\mesh}(\overline {u},\widetilde{v})+\zeta^{(T)}_{\mesh}(\bv)\right).
\end{multline*}
Using the triangle inequality in the definition \eqref{eq:def.Suv} of $\delta^{(T)}_{\mesh}$, we infer
\[
 \widehat{\beta} \delta^{(T)}_{\mesh}(\overline {u}, u)
 \le \sqrt{3}\left( 3\delta^{(T)}_{\mesh}(\overline {u},\widetilde{v})+\zeta^{(T)}_{\mesh}(\bv)\right)+ \widehat{\beta}\delta^{(T)}_{\mesh}(\overline {u}, \widetilde{v}).
\]
Since $\widetilde{v}$ is arbitrary in $W_{\mesh}$, this concludes the proof of the second inequality in \eqref{eq:errestgd}.

\medskip

Let us now turn to the first inequality in \eqref{eq:errestgd}; first note that
\[
 \inf_{v\in W_{\mesh}} \delta^{(T)}_{\mesh}(\overline {u},v) \le \delta^{(T)}_{\mesh}(\overline {u},u).
\]
To bound $\zeta^{(T)}_{\mesh}(\bv)$ we recall that $\bv=\nabla R \overline {u}' +  \nabla \overline {u}  +\bF$ satisfies $-{\rm div} \bv=f$ (see \eqref{eq:pbcontriesz}), and use the scheme \eqref{eq:schemevar.recast} (with $z=0$) to write, for any $v\in V_{\mesh}\backslash\{0\}$,
\begin{align*}
\langle \bv,G_{\mesh} v\rangle_{L^2(\Omega)^d}+{}&\langle{\rm div}\bv, v\rangle_{L^2(\Omega)}\\
={}& \int_0^T \Big(\langle  \nabla R \overline {u}'(t) + \nabla  \overline {u}(t)  \\
 &\qquad\qquad- ( G_{\mesh} R_{\mesh}\eth_t u(t) +  G_{\mesh} u(t)) ,G_{\mesh} v(t)\rangle_{ L^2(\Omega)^d} 
\Big){\rm d}t \\
\le{}& \left(\Vert (\nabla R \overline {u}' - G_{\mesh} R_{\mesh}\eth_t u)\Vert_{L^2( L^2(\Omega)^d)} + \Vert(\nabla  \overline {u} - G_{\mesh} u)\Vert_{L^2( L^2(\Omega)^d)}\right)\\
&\times\Vert G_{\mesh} v\Vert_{L^2( L^2(\Omega)^d)}.
\end{align*}
Dividing by $\Vert G_{\mesh} v\Vert_{L^2( L^2(\Omega)^d)}$ and taking the supremum over $v\in V_{\mesh}\backslash\{0\}$ shows that $\zeta^{(T)}_{\mesh}(\bv)\le \delta^{(T)}_{\mesh}(\overline {u},u)$, which concludes the proof.

\medskip

Finally, as in the steady case, we notice that $( u^{(0)},(u^{(m)})_{m\in \llbracket 1,N \rrbracket})$ is solution to a square linear system. Therefore the error estimate Theorem \ref{thm:errest} proves that, for a null right-hand-side, the solution is null, which shows that the system is invertible.

\end{proof}

The following lemma is proven in \cite{DEGGH}.

\begin{lem}[\protect{\cite[Lemma 4.7]{DEGGH}}]\label{lem:suffbnb}
Let ${V}$ and $ L $ be Hilbert spaces. Let $Z$ and $Y$ be the Hilbert spaces defined by $Z = {V}\times {V}\times  L \times  L $ and $Y = {V}\times  L $. Let $A:{V}\to {V}$ be an $M$-continuous and $\alpha$-coercive linear operator  (with $M\ge 1$ and $\alpha>0$), which means that 
\begin{equation}
\label{eq:Malpha.operator}
\Vert A v\Vert_{{V}} \le M \|v\|_{V}\quad\mbox{and}\quad
\langle A v, v\rangle_{{V}} \ge \alpha \|v\|_{V}^2\quad\forall v\in {V}.
\end{equation} 
Let $\Phi: L \to  L $ be a linear operator such that $\Vert \Phi\Vert\le 1$, and let $\widehat{b}~:~Z\times Y\to \mathbb{R}$ be defined by
 \begin{equation}\label{eq:defblemme} 
 \widehat{b}((z_1,z_2,z_3,z_4),(y_1,y_2)) = \langle z_1 + A z_2,y_1\rangle_{{V}} + \langle z_3 - \Phi z_4,y_2\rangle_{ L },
 \end{equation}
for all $(z_1,z_2,z_3,z_4)\in Z$ and for all $(y_1,y_2)\in Y$.

 Let $X\subset Z$ be a subspace of $Z$. We define the Hilbert spaces $X_1\subset V$, $X_2\subset V$, $X_3\subset  L $ and $X_4\subset  L $ by: for $i=1,\ldots,4$,
 \[
  X_i = \overline{\{ x_i: \ x\in X\} },\hbox{ where }x_i\hbox{ is the $i$-th component of }x\in Z.
 \]
 Assume that 
 \begin{equation}\label{eq:hypxunxdeux}
 X_1\subset X_2,
 \end{equation}
 and that there exist $\zeta>0$ and $\delta >0$ such that, for all $x\in X$,
\begin{equation}\label{eq:condlim}
 \langle x_1,x_2\rangle_{{V}} +\frac {\alpha^2} {12\, M^3}(\Vert  x_2\Vert_{V}^2 + \Vert x_1\Vert_{{V}}^2)\ge \mu\Vert x_4\Vert_{ L }^2 -\nu \Vert x_3\Vert_{ L }^2,
\end{equation}
for some $\mu\in (0,\zeta]$ and $\nu\in [0,\mu]$ with $\mu - \nu\Vert\Phi\Vert^2 \ge \delta$.
 Then, there exists $\widehat{\beta}>0$, only depending on $\alpha$, $M$, $\zeta$ and $\delta$ (and not on $\mu$, $\nu$ and $\Vert\Phi\Vert$) such that
 \begin{equation}\label{eq:bnbdisc}
  \sup_{y\in X_2\times X_3, \Vert y\Vert_Y = 1} \widehat{b}(x,y) \ge  \widehat{\beta} \Vert x\Vert_{Z}\quad\forall x\in X.
 \end{equation}
\end{lem}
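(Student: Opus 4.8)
The plan is to establish the discrete inf–sup inequality \eqref{eq:bnbdisc} by the classical device of exhibiting, for each $x=(x_1,x_2,x_3,x_4)\in X$, an explicit admissible test pair $y=(y_1,y_2)\in X_2\times X_3$ for which $\widehat{b}(x,y)$ bounds $\Vert x\Vert_Z^2$ from below while $\Vert y\Vert_Y$ is bounded from above by $\Vert x\Vert_Z$; dividing the two estimates then produces $\widehat{\beta}$. The natural candidate is $y_1=x_2+\theta x_1$ and $y_2=s\,x_3$ for two positive parameters $\theta,s$ to be fixed. Here $y_1\in X_2$ is legitimate \emph{precisely} because of the hypothesis \eqref{eq:hypxunxdeux}, $X_1\subset X_2$, which makes $x_1$ an admissible first-component test, while $y_2\in X_3$ is automatic.

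First I would expand $\widehat{b}(x,y)$ from \eqref{eq:defblemme}, obtaining $\langle x_1,x_2\rangle_V+\theta\Vert x_1\Vert_V^2+\langle Ax_2,x_2\rangle_V+\theta\langle Ax_2,x_1\rangle_V+s\Vert x_3\Vert_L^2-s\langle\Phi x_4,x_3\rangle_L$. I would then apply the $\alpha$-coercivity and $M$-continuity \eqref{eq:Malpha.operator} of $A$ to bound $\langle Ax_2,x_2\rangle_V\ge\alpha\Vert x_2\Vert_V^2$ and $\theta\langle Ax_2,x_1\rangle_V\ge-\theta M\Vert x_1\Vert_V\Vert x_2\Vert_V$, use $\Vert\Phi\Vert\le 1$ with Cauchy--Schwarz on the last term, and crucially invoke the structural hypothesis \eqref{eq:condlim} to replace $\langle x_1,x_2\rangle_V$ by $\mu\Vert x_4\Vert_L^2-\nu\Vert x_3\Vert_L^2-\tfrac{\alpha^2}{12M^3}(\Vert x_1\Vert_V^2+\Vert x_2\Vert_V^2)$. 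This is the only mechanism producing a positive $\Vert x_4\Vert_L^2$ contribution, since no admissible test can see $x_4$ directly.

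The outcome is a lower bound for $\widehat{b}(x,y)$ by a quadratic form in the four nonnegative scalars $(\Vert x_1\Vert_V,\Vert x_2\Vert_V,\Vert x_3\Vert_L,\Vert x_4\Vert_L)$ that \emph{decouples} into two independent $2\times2$ blocks: a primal block in $(\Vert x_1\Vert_V,\Vert x_2\Vert_V)$ with entries built from $\theta,\alpha,M$, and a temporal block in $(\Vert x_3\Vert_L,\Vert x_4\Vert_L)$ with entries built from $s,\mu,\nu,\Vert\Phi\Vert$. For the primal block the choice $\theta=\alpha^2/(6M^3)$ makes the diagonal entries $\tfrac{\alpha^2}{12M^3}$ and $\alpha-\tfrac{\alpha^2}{12M^3}$ (both positive, since $\alpha\le M$ forces $\alpha/M^3\le1$) and, using $\alpha\le M$ once more, renders the determinant positive with a lower bound $\tfrac{5\alpha^3}{72M^3}$ depending only on $\alpha,M$; this is exactly why the constant $\tfrac1{12}$ sits in \eqref{eq:condlim}. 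For the temporal block, positive-definiteness amounts to $4\mu(s-\nu)>s^2\Vert\Phi\Vert^2$, and the admissible window $2\nu<s<2\mu/\Vert\Phi\Vert^2$ is nonempty \emph{precisely} because $\mu-\nu\Vert\Phi\Vert^2\ge\delta>0$; taking $s=2\nu+2\delta$ (which remains bounded by $2\zeta+2\delta$ since $\nu\le\mu\le\zeta$) yields a determinant bounded below by $\delta^2$, hence a smallest eigenvalue controlled in terms of $\zeta,\delta$ alone, the case $\Vert\Phi\Vert=0$ being trivial.

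Collecting the two blocks gives $\widehat{b}(x,y)\ge c_0\Vert x\Vert_Z^2$ with $c_0=c_0(\alpha,M,\zeta,\delta)>0$, while $\Vert y\Vert_Y\le C_1\Vert x\Vert_Z$ with $C_1$ depending only on $\theta$ and $s$, hence only on $\alpha,M,\zeta,\delta$; normalising $y$ then delivers \eqref{eq:bnbdisc} with $\widehat{\beta}=c_0/C_1$. I expect the temporal block to be the main obstacle: the whole difficulty is to keep the final constant independent of the individual values of $\mu,\nu$ and $\Vert\Phi\Vert$, so one must use the hypotheses only through the two uniform quantities $\zeta$ (an upper bound for $\mu,\nu$) and $\delta$ (a lower bound for the gap $\mu-\nu\Vert\Phi\Vert^2$), and verify that both the chosen $s$ and the resulting determinant bound can be expressed through $\zeta$ and $\delta$ alone.
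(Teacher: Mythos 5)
The paper does not actually prove this lemma: it is imported verbatim from \cite[Lemma 4.7]{DEGGH} and the text only says that the proof may be found there, so there is no in-paper argument to compare against. Judged on its own, your proposal is a correct and essentially complete proof, and it follows the route one would expect the cited reference to take (the constant $\tfrac{\alpha^2}{12M^3}$ in \eqref{eq:condlim} is plainly calibrated for a test function $y_1=x_2+\theta x_1$ with $\theta$ proportional to $\alpha^2/M^3$). I checked the two computations on which everything hinges. For the primal block with $\theta=\alpha^2/(6M^3)$, the matrix $\bigl(\begin{smallmatrix}\alpha^2/(12M^3) & -\alpha^2/(12M^2)\\ -\alpha^2/(12M^2) & \alpha-\alpha^2/(12M^3)\end{smallmatrix}\bigr)$ has trace $\alpha$ and, using $\alpha\le M$ (which follows from \eqref{eq:Malpha.operator}) and $M\ge1$, determinant at least $\tfrac{\alpha^3}{12M^3}-\tfrac{2\alpha^3}{144M^3}=\tfrac{5\alpha^3}{72M^3}$, hence smallest eigenvalue at least $\tfrac{5\alpha^2}{72M^3}$, as you claim. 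For the temporal block with $s=2\nu+2\delta$, the determinant is $\mu(\nu+2\delta)-(\nu+\delta)^2\Vert\Phi\Vert^2\ge(\nu\Vert\Phi\Vert^2+\delta)(\nu+2\delta)-(\nu+\delta)^2\Vert\Phi\Vert^2=\delta\nu+2\delta^2-\delta^2\Vert\Phi\Vert^2\ge\delta^2$, and the trace is $\mu+\nu+2\delta\le2\zeta+2\delta$, so its smallest eigenvalue is at least $\delta^2/(2\zeta+2\delta)$, depending only on $\zeta$ and $\delta$ as required. (Your stated window $2\nu<s<2\mu/\Vert\Phi\Vert^2$ is in fact a strict subinterval of the true admissibility interval, but that is harmless since you only need one valid $s$.) The remaining points are all in order: $y_1\in X_2$ is licensed by \eqref{eq:hypxunxdeux} together with $X_2$ being a subspace, $\Vert y\Vert_Y\le C_1\Vert x\Vert_Z$ with $C_1$ controlled by $\theta$ and $s\le2\zeta+2\delta$, and the degenerate case $y=0$ forces $x=0$. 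So $\widehat\beta=c_0/C_1$ depends only on $\alpha$, $M$, $\zeta$, $\delta$, which is exactly the uniformity the statement demands.
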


\bibliographystyle{abbrv}
\bibliography{ester_fv.bib}

\end{document}